\theoremstyle{plain}
\newtheorem{theo}{Theorem}[section]
\newtheorem{lemme}[theo]{Lemma}
\newtheorem{prop}[theo]{Proposition}
\newtheorem{coro}[theo]{Corollary}
\theoremstyle{definition}
\newtheorem{nota}[theo]{Notation}
\newtheorem{conj}[theo]{Conjecture}
\newtheorem{exem}[theo]{Example}
\theoremstyle{remark}
\newtheorem{rema}[theo]{Remark}
\newtheorem{rappels}[theo]{Recall}
\def\A{\mathop{\rm A}\nolimits}
\def\E{\mathop{\rm E}\nolimits}
\def\F{\mathop{\rm F}\nolimits}
\def\G{\mathop{\rm G}\nolimits}
\def\H{\mathop{\rm H}\nolimits}
\def\K{\mathop{\rm K}\nolimits}
\def\L{\mathop{\rm L}\nolimits}
\def\M{\mathop{\rm M}\nolimits}
\def\O{\mathop{\rm O}\nolimits}
\def\S{\mathop{\rm S}\nolimits}
\def\T{\mathop{\rm T}\nolimits}
\def\U{\mathop{\rm U}\nolimits}
\def\Sp{\mathop{\rm Sp}\nolimits}
\def\det{\mathop{\rm det}\nolimits}
\def\GL{\mathop{\rm GL}\nolimits}	
\def\End{\mathop{\rm End}\nolimits}
\def\Hom{\mathop{\rm Hom}\nolimits}
\def\Op{\mathop{\rm Op}\nolimits}
\def\Re{\mathop{\rm Re}\nolimits}
\def\Im{\mathop{\rm Im}\nolimits}
\def\tr{\mathop{\rm tr}\nolimits}
\def\Ad{\mathop{\rm Ad}\nolimits}
\def\ad{\mathop{\rm ad}\nolimits}
\def\dim{\mathop{\rm dim}\nolimits}
\def\rk{\mathop{\rm rk}\nolimits}
\def\sgn{\mathop{\rm sgn}\nolimits}
\def\diag{\mathop{\rm diag}\nolimits}
\def\reg{\mathop{\rm reg}\nolimits}
\def\supp{\mathop{\rm supp}\nolimits}
\def\pr{\mathop{\rm pr}\nolimits}
\def\ch{\mathop{\rm ch}\nolimits}
\def\sh{\mathop{\rm sh}\nolimits}
\def\Chc{\mathop{\rm Chc}\nolimits}
\def\cos{\mathop{\rm cos}\nolimits}
\def\sin{\mathop{\rm sin}\nolimits}
\def\Id{\mathop{\rm Id}\nolimits}
\def\HS{\mathop{\rm HS}\nolimits}
\def\Mat{\mathop{\rm Mat}\nolimits}
\def\Cont{\mathop{\rm Cont}\nolimits}
\def\Id{\mathop{\rm Id}\nolimits}
\def\b{\mathop{\rm b}\nolimits}
\def\WF{\mathop{\rm WF}\nolimits}
\def\cov{\mathop{\rm cov}\nolimits}
\title{Transfer of characters in the theta correspondence with one compact member}
\author{ALLAN MERINO}
\address{ Department of Mathematics \\ National University of Singapore \\ Block S17 \\ 10, Lower Kent Ridge Road \\ Singapore 119076 \\ Republic of Singapore}
\email{matafm@nus.edu.sg}
\keywords{Howe correspondence, Characters, Oscillator semigroup, Reductive dual pairs}
\subjclass[2010]{Primary: 22E45; Secondary: 22E46, 22E30.}
\begin{document}

\maketitle

\begin{abstract}

For an irreducible dual pair $(\G, \G') \in \Sp(W)$ with one member compact and two representations $\Pi \leftrightarrow \Pi'$ appearing in the Howe duality, we give an expression of the character $\Theta_{\Pi'}$ of $\Pi'$ via the character of $\Pi$. We make computations for the dual pair $(\G = \U(n, \mathbb{C}), \G' = \U(p, q, \mathbb{C}))$, which are explicit in low dimensions. For $(\G = \U(1, \mathbb{C}), \G' = \U(1, 1, \mathbb{C}))$, we verify directly a result of H. Hecht saying that the character has the same value on both Cartan subgroups of $\G'$.

\end{abstract}

\tableofcontents

\section{Introduction}

\noindent For a finite dimensional representation $(\Pi, V)$ of a group G, the character of $\Pi$, denote by $\Theta_{\Pi}$, is defined by:
\begin{equation*}
\Theta_{\Pi}: \G \ni g \to \tr(\Pi(g)) \in \mathbb{C}.
\end{equation*}
In general, to determine precisely the character of such representations it's a hard problem, but in few cases, in particular for a compact connected group, the formula is explicit. Indeed, let G be a compact connected Lie group, T a Cartan subgroup of G, $\mathfrak{g}$ and $\mathfrak{t}$ the Lie algebras of G and T respectively, $\mathfrak{g}_{\mathbb{C}}$ and $\mathfrak{t}_{\mathbb{C}}$ the complexifications of $\mathfrak{g}$ and $\mathfrak{t}$, $\Phi(\mathfrak{g}_{\mathbb{C}}, \mathfrak{t}_{\mathbb{C}})$ (resp. $\Phi^{+}(\mathfrak{g}_{\mathbb{C}}, \mathfrak{t}_{\mathbb{C}})$) be the set of roots (resp. positive roots) of $\mathfrak{g}_{\mathbb{C}}$ with respect to $\mathfrak{t}_{\mathbb{C}}$ and $\mathscr{W} = \mathscr{W}(\mathfrak{g}_{\mathbb{C}}, \mathfrak{t}_{\mathbb{C}})$ be the corresponding Weyl group. According to H. Weyl, all the irreducible representations $(\Pi, V)$ of G are finite dimensional and parametrised by a linear form $\lambda$ on $\mathfrak{t}_{\mathbb{C}}$: this linear form is called the highest weight of $\Pi$. Moreover, the character of $\Pi$ is given by the following formula:
\begin{equation}
\Theta_{\Pi}(\exp(x)) = \sum\limits_{\omega \in \mathscr{W}} \sgn(\omega) \cfrac{e^{\omega(\lambda + \rho)(x)}}{\prod\limits_{\alpha \in \Phi^{+}(\mathfrak{g}_{\mathbb{C}}, \mathfrak{t}_{\mathbb{C}})} (e^{\frac{\alpha(x)}{2}} - e^{-\frac{\alpha(x)}{2}})} \qquad (x \in \mathfrak{t}^{\reg}),
\label{WeylCharacter}
\end{equation}
where $\rho$ is a linear form on $\mathfrak{t}_{\mathbb{C}}$ given by $\rho = \frac{1}{2} \sum\limits_{\alpha \in \Phi^{+}(\mathfrak{g}_{\mathbb{C}}, \mathfrak{t}_{\mathbb{C}})} \alpha$.

\noindent In the 50's, for a real reductive Lie group G, Harish-Chandra extended the concept of characters for a certain class of representation of G called quasi-simple (see \cite[Section~10]{HAR1}). More precisely, for such a representation $(\Pi, \mathscr{H})$ of G, he proved that the map:
\begin{equation*}
\Theta_{\Pi}: \mathscr{C}^{\infty}_{c}(\G) \ni \Psi \to \tr(\Pi(\Psi)) = \tr \displaystyle\int_{\G} \Psi(g) \Pi(g) dg \in \mathbb{C}
\end{equation*}
is well-defined and continuous. The map $\Theta_{\Pi}$ is called the global character of $\Pi$. Moreover, he proved that this distribution is given by an analytic function, still denoted by $\Theta_{\Pi}$, on the set of regular points of $\G$, i.e.
\begin{equation*}
\Theta_{\Pi}(\Psi) = \displaystyle\int_{\G} \Theta_{\Pi}(g) \Psi(g) dg \qquad (\Psi \in \mathscr{C}^{\infty}_{c}(\G)).
\end{equation*}
Again, an explicit formula for the function $\Theta_{\Pi}$ on $\G^{\reg}$ is hard to get. We recall briefly some well known facts on those characters. Let G be reductive group, K be a maximal compact subgroup of G such that $\rk(\K) = \rk(\G)$ and T be a Cartan subgroup of K (which is also a Cartan subgroup for G by our assumptions on ranks). As before, we denote by $\mathfrak{g}, \mathfrak{k}$ and $\mathfrak{t}$ their Lie algebras and by $\mathfrak{g}_{\mathbb{C}}, \mathfrak{k}_{\mathbb{C}}$ and $\mathfrak{t}_{\mathbb{C}}$ their complexifications.
\begin{enumerate}
\item If $(\Pi, \mathscr{H})$ is a discrete series representation of G of Harish-Chandra parameter $\lambda \in \mathfrak{t}^{*}_{\mathbb{C}}$, the character $\Theta_{\Pi}$ of $\Pi$ is given by (see \cite{HAR2}):
\begin{equation*}
\Theta_{\Pi}(\exp(x)) = (-1)^{\frac{\dim(\G/\K)}{2}} \sum\limits_{\omega \in \mathscr{W}(\mathfrak{k}_{\mathbb{C}}, \mathfrak{t}_{\mathbb{C}})} \varepsilon(\omega) \cfrac{e^{\omega(\lambda(x))}}{\prod\limits_{\alpha \in \Phi^{+}(\mathfrak{g}_{\mathbb{C}}, \mathfrak{t}_{\mathbb{C}})} (e^{\frac{\alpha(x)}{2}} - e^{-\frac{\alpha(x)}{2}})} \qquad (x \in \mathfrak{t}^{\reg}),
\end{equation*}
where $\mathscr{W}(\mathfrak{k}_{\mathbb{C}}, \mathfrak{t}_{\mathbb{C}})$ is the compact Weyl group of $\K$.
\item If $(\Pi, \mathscr{H}_{\Pi})$ is an irreducible unitary representation of $\G$ of highest weight $\lambda - \rho$, the character $\Theta_{\Pi}$ of $\Pi$ is given by (see \cite[Corollary~2.3]{ENR}):
\begin{equation*}
\prod\limits_{\alpha \in \Phi^{+}(\mathfrak{g}_{\mathbb{C}}, \mathfrak{t}_{\mathbb{C}}) \setminus \Phi^{+}(\mathfrak{k}_{\mathbb{C}}, \mathfrak{t}_{\mathbb{C}})}(e^{\frac{\alpha(x)}{2}} - e^{-\frac{\alpha(x)}{2}}) \Theta_{\Pi}(\exp(x)) = \sum\limits_{\omega \in \mathscr{W}^{\mathfrak{k}}_{\lambda}} (-1)^{l_{\lambda}(\omega)} \Theta(\K, \Lambda(\omega, \lambda))(\exp(x)) \qquad (x \in \mathfrak{t}^{\reg}),
\end{equation*}
where $\mathscr{W}^{\mathfrak{k}}_{\lambda}$ is defined in \cite[Definition~2.1]{ENR}, and $\Theta(\K, \Lambda(\omega, \lambda))(\exp(x))$ is the character of a $\K$-representation of highest weight $\Lambda(\omega, \lambda)$, where $\Lambda(\omega, \lambda)$ is defined in \cite[Corollary~2.3]{ENR}.
\end{enumerate}
We can also mention a conjecture of A. Kirillov (see \cite{KIR}), which should hold for a really general Lie group, and a paper of H. Hecht (\cite{HEC})

\noindent Let $(W, \langle \cdot, \cdot\rangle )$ be a real symplectic space, $\Sp(W)$ its corresponding group of isometries, $\widetilde{\Sp(W)}$ the metaplectic group and $(\omega, \mathscr{H})$ the corresponding metaplectic representation (see Section \ref{Metaplectic}). For a subgroup $\H \in \Sp(W)$, we denote by $\widetilde{\H}$ its preimage in $\widetilde{\Sp(W)}$ and by $\mathscr{R}(\widetilde{\H}, \omega)$ the set of equivalence classes of irreducible admissible representations of $\widetilde{\H}$ which are infinitesimally equivalent to a quotient of $\omega^{\infty}$. For an irreducible reductive dual pair $(\G, \G')$ in $\Sp(W)$, R. Howe proved that there exists a bijection between $\mathscr{R}(\widetilde{\G}, \omega)$ and $\mathscr{R}(\widetilde{\G'}, \omega)$.

\noindent In this paper, we assume that $\G$ is compact. In that case, it turns out that the situation is easier: the representations $\Pi$ and $\Pi'$ are just subrepresentations of $\omega^{\infty}$. Our goal here is to determine the character $\Theta_{\Pi'}$ using the character $\Theta_{\Pi}$ of $\Pi$. By projecting on the $\Pi$-isotypic component in $\mathscr{H}^{\infty}$, we get, for all $\Psi \in \mathscr{C}^{\infty}_{c}(\widetilde{\G})$, that:
\begin{equation*}
\Theta_{\Pi}(\Psi) = \tr \displaystyle\int_{\widetilde{\G'}} \displaystyle\int_{\widetilde{\G}} \overline{\Theta_{\Pi}(\widetilde{g})} \Psi(\widetilde{g'}) \omega^{\infty}(\widetilde{g}\widetilde{g'}) d\widetilde{g}d\widetilde{g'}.
\end{equation*}
So, formally, we have:
\begin{equation*}
\Theta_{\Pi'}(\widetilde{g}') = \displaystyle\int_{\widetilde{\G}} \overline{\Theta_{\Pi}(\widetilde{g})}\Theta(\widetilde{g}\widetilde{g}')d\widetilde{g} \qquad (\widetilde{g}' \in \widetilde{\G}'^{\reg}),
\end{equation*}
where the last equality is in terms of distributions on $\mathscr{C}^{\infty}_{c}(\widetilde{\G})$ and where $\Theta$ is the character of $\omega$ (see Section \ref{Metaplectic}). To avoid the problem of non-continuity of $\Theta$, we use the Oscillator semigroup introduced by Howe (see \cite{HOW3} or Section \ref{Oscillator}) and denoted by $\widetilde{\Sp(W_{\mathbb{C}})^{++}}$. The extension of $\Theta$ on $\widetilde{\Sp(W_{\mathbb{C}})^{++}}$ is holomorphic and $\Sp(W). \widetilde{\Sp(W_{\mathbb{C}})^{++}} \subseteq \widetilde{\Sp(W_{\mathbb{C}})^{++}}$. In particular, we get for $\widetilde{g'} \in \widetilde{\G'}^{\reg}$ that:
\begin{equation*}
\Theta_{\Pi'}(\widetilde{g'}) = \lim\limits_{\underset{\widetilde{p} \in \widetilde{\G'^{++}}}{\widetilde{p} \to 1}} \displaystyle\int_{\widetilde{\G}} \overline{\Theta_{\Pi}(\widetilde{g})} \Theta(\widetilde{g}\widetilde{g'}\widetilde{p}) d\widetilde{g},
\end{equation*}
where $\G'^{++} = \G'_{\mathbb{C}} \cap \Sp(W_{\mathbb{C}})^{++}$. The character of the representation $\Pi$ can be obtained using Weyl's character formula (see Equation \eqref{WeylCharacter}) together with a paper of M. Kashiwara and M. Vergne \cite{VER}, where they give explicitly the weights of the representations appearing in the correspondence. Moreover, using \cite{THO} or \cite{TOM1}, we get an explicit formula for the restriction of the character $\Theta$ on $\T. \T'^{++}$, where $\T$ (resp. $\T'$) is a compact torus of $\G$ (resp. $\G'$) and $\T'^{++} = \T'_{\mathbb{C}} \cap \Sp(W_{\mathbb{C}})^{++}$. We focus our attention on the case $(\G = \U(n, \mathbb{C}), \G' = \U(p, q, \mathbb{C}))$. More precisely, for $n=1$, we get the following result (see Proposition \ref{CharacterU(1,C)}):
\begin{equation*}
\Theta_{\Pi'_{k}}(\widetilde{t}') = \begin{cases} \prod\limits_{i=1}^{p+q} t^{\frac{1}{2}}_{i} \sum\limits_{h=1}^{p} \cfrac{t^{p-(k+1)}_{h}}{\prod\limits_{h \neq j} (t_{h} - t_{j})} \quad & \text { if } k \leqslant p - 1 \\
- \prod\limits_{i=1}^{p+q} t^{\frac{1}{2}}_{i} \sum\limits_{h = p+1}^{p+q} \cfrac{t^{p - (k+1)}_{h}}{\prod\limits_{h \neq j} (t_{h} - t_{j})} \quad & \text{ otherwise }
\end{cases}
\end{equation*}
In Section \ref{SectionU(1,1)}, we work with the pair $(\G = \U(1, \mathbb{C}), \G' = \U(1, 1, \mathbb{C}))$ and determine the value of the character $\Theta_{\Pi'}$ on the non-compact torus of $\U(1, 1, \mathbb{C})$ (unique up to conjugation, see Section \ref{SectionU(1,1)}). In particular, we verify a result of H. Hecht \cite{HEC}, saying that the value of the character does not depend of the Cartan subgroup

\noindent In Section \ref{ConjectureT}, we recall a conjecture of T. Przebinda (see \cite{TOM3}) concerning the transfer of characters for a general dual pair $(\G, \G')$ and present in few words an ongoing project linked with recents works of T. Przebinda \cite{TOM2}.

\bigskip

\noindent \textbf{Acknowledgements:} A part of this paper was done during my thesis at the University of Lorraine under the supervision of Angela Pasquale (University of Lorraine) and Tomasz Przebinda (University of Oklahoma). I would like to thank them for the ideas and time they shared with me. I finished this paper during my stay at the National University of Singapore, as a Research Fellow under the supervision of Hung Yean Loke. I am supported by the grant R-146-000-261-114 (Reductive dual pair correspondences and supercuspidal representations).

\section{Metaplectic representation and Howe's duality theorem}

\label{Metaplectic}

\noindent As far as I know, the first construction of this metaplectic representation used the so-called Stone-Von Neumann theorem. Brieffly, let $(W, \langle \cdot, \cdot\rangle )$ be a real symplectic space and $\H(W)$ the space $W \oplus \mathbb{R}$ with group multiplication:
\begin{equation*}
(w_{1}, \lambda_{1}).(w_{2}, \lambda_{2}) = (w_{1} + w_{2}, \lambda_{1} + \lambda_{2} + \frac{1}{2}\langle w_{1}, w_{2}\rangle ), \qquad (w_{1}, w_{2} \in W, \lambda_{1}, \lambda_{2} \in \mathbb{R}).
\end{equation*}
Clearly, $\mathscr{Z}(\H(W)) = \{(0, \lambda), \lambda \in \mathbb{R}\} \approx \mathbb{R}$. According to the Stone- Von Neumann theorem, for every character non trivial character $\Psi$ of $\mathscr{Z}(\H(W))$, there exists, up to equivalence, a unique irreducible unitary representation of $\H(W)$ with central character $\Psi$. The group of isometries of $(W, \langle \cdot, \cdot\rangle )$, denoted by $\Sp(W)$, acts naturally on $\H(W)$ by
\begin{equation*}
g. (w, \lambda) = (g(w), \lambda) \qquad (g \in \Sp(W), (w, \lambda) \in \H(W)).
\end{equation*}
By fixing an irreducible unitary representation $(\Pi_{\lambda}, \mathscr{H}_{\lambda})$ of $\H(W)$ with infinitesimal character $\Psi_{\lambda}, \lambda \in \mathbb{R}$, we get that the map: 
\begin{equation*}
\Pi_{\lambda, g}(h) = \Pi_{\lambda}(g^{-1}(h)) \qquad (g \in \Sp(W), h \in \H(W)),
\end{equation*}
is an irreducible unitary representation of $\H(W)$ with infinitesimal character $\Psi_{\lambda}$, and then, by application of the Stone- Von Neumann theorem, there exists an operator $\omega_{\lambda}(g)$ such that:
\begin{equation*}
\omega_{\lambda}(g) \Pi_{\lambda}(h) \omega_{\lambda}(g)^{-1} = \Pi_{\lambda}(g^{-1}(h)).
\end{equation*}
In particular, we get a projective representation of $\Sp(W)$. One can prove that we get a representation $(\omega, \mathscr{H})$ of a non-trivial double cover of $\Sp(W)$, that we will denote by $\widetilde{\Sp(W)}$ (see \cite{WEIL}).

\noindent In this section, we give an explicit realisation of the metaplectic representation (using a paper of A-M. Aubert and T. Przebinda \cite{TOM1}). In particular, we get a formula for the character of this representation (one can also check the paper of T. Thomas \cite{THO}).

\noindent Let $\chi$ be the character of $\mathbb{R}$ given by $\chi(r) = e^{2i\pi r}$. We denote by $\mathfrak{sp}(W)$ the Lie algebra of $\Sp(W)$, i.e.
\begin{equation*}
\mathfrak{sp}(W) = \left\{X \in \End(W), \langle X(w), w'\rangle  + \langle w, X(w')\rangle  = 0, \thinspace (\forall w, w' \in W)\right\}.
\end{equation*}
Let $J$ be an element of $\mathfrak{sp}(W)$ satisfying $J^{2} = - \Id$ and such that the symmetric bilinear form $(w, w')$ defined by $(w, w') = \langle J(w), w'\rangle $ is positive definite. For all $g \in \Sp(W)$, we denote by $J_{g}$ the automorphism of $W$ given by $J_{g} = J^{-1}(g-1)$. One can check easily that the adjoint $J^{*}_{g}$ of $J_{g}$ with respect to the form $\left(\cdot, \cdot\right)$ is given by $J^{*}_{g} = Jg^{-1}(1-g)$ and that the restriction of $J_{g}$ to $J_{g}(W)$ is well defined and invertible. The metaplectic group is defined as:
\begin{equation}
\widetilde{\Sp(W)} = \left\{\widetilde{g} = (g, \xi) \in \Sp(W) \times \mathbb{C}^{*}, \xi^{2} = i^{\dim_{\mathbb{R}}(g-1)W} \det(J_{g})^{-1}_{J_{g}(W)}\right\}.
\label{MetaplecticGroup}
\end{equation}
The covering map $\pi: \widetilde{\Sp(W)} \ni (g, \xi) \to g \in \Sp(W)$ is the first projection and the multiplication law is defined by:
\begin{equation*}
(g_{1}, \xi_{1}).(g_{2}, \xi_{2}) = (g_{1}g_{2}, \xi_{1}\xi_{2}C(g_{1}, g_{2})),
\end{equation*}
where the cocycle $C: \Sp(W) \times \Sp(W) \to \mathbb{C}$ is defined in \cite[Proposition~4.13]{TOM1}. Using \cite[Equation~(3)]{TOM2}, we get that the absolute value of $C$ satisfies, for every $g_{1}, g_{2} \in \Sp(W)$, the following equations:
\begin{equation*}
|C(g_{1}, g_{2})| = \sqrt{\left|\cfrac{\det(J_{g_{1}})_{J_{g_{1}}(W)}\det(J_{g_{2}})_{J_{g_{2}}(W)}}{\det(J_{g_{1}g_{2}})_{J_{g_{1}g_{2}}(W)}}\right|}, \qquad \qquad \cfrac{C(g_{1}, g_{2})}{|C(g_{1}, g_{2})|} = \chi\left(\frac{1}{8} \sgn(q_{g_{1}, g_{2}})\right),
\end{equation*}
where $\sgn(q_{g_{1}, g_{2}})$ is the signature of the form $q_{g_{1}, g_{2}}$ defined by:
\begin{equation*}
q_{g_{1}, g_{2}}(u, v) = \frac{1}{2} \left(\langle c(g_{1})u, v\rangle  + \langle c(g_{2})u, v\rangle \right) \qquad (u, v \in (g_{1}-1)W \cap (g_{2}-1)W).
\end{equation*}
To simplify the notations, for all $g \in \Sp(W)$, we denote by $\chi_{c(g)}$ the form on $(g-1)W$ given by $\chi_{c(g)}(u) = \chi\left(\frac{1}{8} \langle c(g)u, u\rangle \right)$.

\noindent We now construct the metaplectic representation. We denote by $\S(W)$ the Schwarz space corresponding to $W$ and by $t: \Sp(W) \to \S^{*}(W)$, $\Theta: \widetilde{\Sp(W)} \to \mathbb{C}^{*}$ and $T: \widetilde{\Sp(W)} \to \S^{*}(W)$ defined by
\begin{equation*}
t(g) = \chi_{c(g)} \mu_{(g-1)W} \qquad \Theta(\widetilde{g}) = \xi, \qquad T(\widetilde{g}) = \Theta(\widetilde{g})t(g), \qquad (\widetilde{g} = (g, \xi)),
\end{equation*}
where $\mu_{(g-1)W} \in \S^{*}(W)$ is the Lebesgue measure on the space $(g-1)W$ such that the volume with respect to $\left(\cdot, \cdot\right)$ of the corresponding unit cube is $1$.

\noindent We now fix a complete polarisation $W = X \oplus Y$, i.e. a direct sum of two maximal isotropic subspaces of $W$. The Weyl transform $\mathscr{K}: \S(W) \to \S(X \times X)$ given by:
\begin{equation*}
\mathscr{K}(\eta)(x, x') = \displaystyle\int_{Y} \eta(x-x'+y) \chi\left(\frac{1}{2} \langle y, x+x'\rangle \right) dy
\end{equation*}
is an isomorphism and the extension of $\mathscr{K}$ to the corresponding space of tempered distributions $\mathscr{K}: \S^{*}(W) \to \S^{*}(X \times X)$ is still an isomorphism. Similarly, the map $\Op: \S(X \times X) \to \Hom(\S(X), \S^{*}(X))$ given by:
\begin{equation*}
\Op(K)v(x) = \displaystyle\int_{X} K(x, x') v(x') dx'
\end{equation*}
extends to isomorphism $\Op: \S^{*}(X \times X) \to \Hom(\S(X), \S^{*}(X))$. According to \cite[Section~4.8]{TOM1}, for every $\Psi \in \L^{2}(W)$, $\Op \circ \mathscr{K}(\Psi)$ is an Hilbert-Schmidt operator on $\L^{2}(X)$ and the map:
\begin{equation*}
\Op \circ \mathscr{K}: \L^{2}(W) \to \HS(\L^{2}(X))
\end{equation*}
is an isometry. We denote by $\omega: \widetilde{\Sp(W)} \to \U(\L^{2}(X))$ defined by:
\begin{equation*}
\omega = \Op \circ \mathscr{K} \circ T
\end{equation*}
is a unitary representation of $\widetilde{\Sp(W)}$, called metaplectic representation. Moreover, the function $\Theta$ defined previously is the character of $(\omega, \L^{2}(X))$ and the space of smooth vectors is $\S(X)$, the Schwartz space of $X$. 

\begin{rema}

We denote by $\Sp(W)^{c}$ the subspace of $\Sp(W)$ defined by $\Sp(W)^{c} = \{g \in \Sp(W), \det(g-1) \neq 0\}$: it's the domain of the Cayley transform of $\Sp(W)$. We denote by $\widetilde{\Sp(W)^{c}}$ the preimage of $\Sp(W)^{c}$ in $\widetilde{\Sp(W)}$. For every $\widetilde{g} = (g, \xi) \in \widetilde{\Sp(W)^{c}}$, we get:
\begin{equation*}
\Theta(\widetilde{g}) = \left(i^{\dim_{\mathbb{R}}W} \det(J(g-1))^{-1}\right)^{\frac{1}{2}} =  \det(i(g-1))^{-\frac{1}{2}}.
\end{equation*}

\end{rema}

\noindent A dual pair in $\Sp(W)$ is a pair of subgroups $(\G, \G')$ of $\Sp(W)$ which are mutually centralizer in $\Sp(W)$, i.e. $C_{\Sp(W)}(\G) = \G'$ and $C_{\Sp(W)}(\G') = \G$. The dual pair is said to be reductive if the action of $\G$ and $\G'$ on $W$ is reductive. If we have a decomposition of $W$ as an orthogonal sum $W = W_{1} \oplus W_{2}$ where $W_{1}$ and $W_{2}$ are $\G.\G'$-invariants, then $(\G_{|_{W_{i}}}, \G'_{|_{W_{i}}})$ is a dual pair in $\Sp(W_{|_{W_{i}}}), i= 1, 2$. If we cannot find such a decomposition, the dual pair is said irreducible.

\noindent The irreducible reductive dual pairs in the symplectic group had been classified by R. Howe \cite{HOW5}. In this paper, we assume that the group $\G$ is compact. In this case, $(\G, \G')$ is one of the following dual pairs
\begin{enumerate}
\item $(\U(n, \mathbb{C}), \U(p, q, \mathbb{C})) \subseteq \Sp(2n(p+q), \mathbb{R})$,
\item $(\O(n, \mathbb{R}), \Sp(2m, \mathbb{R})) \subseteq \Sp(2nm, \mathbb{R})$,
\item $(\U(n, \mathbb{H}), \O^{*}(m, \mathbb{H})) \subseteq \Sp(4nm, \mathbb{R})$.
\end{enumerate}
For the computations in the Section \ref{ComputationsUnitary}, we will focus our attention on the first one. 

\begin{rema}
\begin{enumerate}
\item For a dual pair $(\G, \G')$ in $\Sp(W)$, we denote by $\widetilde{\G} = \pi^{-1}(\G)$ and $\widetilde{\G}'$ the preimages of $\G$ and $\G'$ in $\widetilde{\Sp(W)}$. In \cite{HOW5}, R. Howe proved that $(\widetilde{\G}, \widetilde{\G}')$ is a dual pair in $\widetilde{\Sp(W)}$. With the precise definition we gave for $\widetilde{\Sp(W)}$ in Equation \eqref{MetaplecticGroup}, we can see that easily. Indeed, we need to prove that for all $g \in \G$ and $g' \in \G'$, we have $C(g, g') = C(g', g)$. Obviously, $|C(g, g')| = |C(g', g)|$, and because $q_{g, g'} = q_{g', g}$, the result follows.
\item If the group $\G$ is compact, then $\widetilde{\G}$ is also compact.
\end{enumerate}
\end{rema}

\noindent From now on, we assume that $(\G, \G')$ is an irreducible reductive dual pair in $\Sp(W)$ with G compact. The Howe duality theorem can be stated in a easier way when we assume that one member is compact. As before, we consider a complete polarisation of $W$ of the form $X \oplus Y$ and we realise the metaplectic representation $\omega$ on the space $\L^{2}(X)$. The space of smooth vector is the Schwartz space $\S(X)$ and under the action of $\widetilde{\G}$, we get the following decomposition:
\begin{equation*}
\S(X) = \bigoplus\limits_{(\Pi, V_{\Pi}) \in \widehat{\widetilde{\G}}_{\omega}} V(\Pi), 
\end{equation*}
where $\widehat{\widetilde{\G}}_{\omega}$ is the set of irreducible unitary representations of $\widetilde{\G}$ such that $\Hom_{\widetilde{\G}}(\Pi, \omega^{\infty}) \neq \{0\}$ and $V(\Pi)$ is the $\Pi$-isotypic component in $\S(X)$, i.e. the closure with respect to the topology on $\S(X)$ of the sspace $\{T(V_{\Pi}), T \in \Hom_{\widetilde{\G}}(\Pi, \omega^{\infty})\}$.

\noindent Because $\widetilde{\G}'$ commute with $\widetilde{\G}$, the group $\widetilde{\G}'$ acts on $V(\Pi)$ for every $\Pi \in \widehat{\widetilde{\G}}_{\omega}$, and as a $\widetilde{\G} \times \widetilde{\G}'$-module, we get the following decomposition:
\begin{equation*}
\S(X) = \bigoplus\limits_{(\Pi, V_{\Pi}) \in \widehat{\widetilde{\G}}_{\omega}} \Pi \otimes \Pi',
\end{equation*}
where $\Pi'$ is an irreducible unitary representation of $\widetilde{\G}'$. The map
\begin{equation*}
\theta: \widehat{\widetilde{\G}}_{\omega} \ni \Pi \to \Pi' = \theta(\Pi) \to \widehat{\widetilde{\G'}}_{\omega}
\end{equation*}
is one-to-one and usually called Howe's correspondence.

\noindent Let $(\Pi, V_{\Pi}) \in \widehat{\widetilde{\G}}_{\omega}$ and $\Pi' = \theta(\Pi)$ the corresponding representation of $\widetilde{\G}'$. We denote by $\mathscr{P}_{\Pi}: \S(X) \to V(\Pi)$ the projection onto the $\Pi$-isotypic component. According to \cite[Section~1.4]{WAL}, the map $\mathscr{P}_{\Pi}$ is given by the formula:
\begin{equation*}
\mathscr{P}_{\Pi} = d_{\Pi} \displaystyle\int_{\widetilde{\G}} \overline{\Theta_{\Pi}(\widetilde{g})} \omega^{\infty}(\widetilde{g}) d\widetilde{g} = \omega^{\infty}(d_{\Pi} \overline{\Theta_{\Pi}}),
\end{equation*}
where $d_{\Pi} = \dim_{\mathbb{C}}(V_{\Pi})$ is the dimension of the representation $\Pi$. We get the following result for the global character of $\Pi'$.

\begin{prop}

For every compactly supported function $\Psi \in \mathscr{C}^{\infty}_{c}(\widetilde{\G}')$, we get:
\begin{equation*}
\Theta_{\Pi'}(\Psi) = \tr \displaystyle\int_{\widetilde{\G}'}\left(\displaystyle\int_{\widetilde{\G}} \overline{\Theta_{\Pi}(\widetilde{g})}\omega^{\infty}(\widetilde{g}\widetilde{g}') d\widetilde{g}\right) \Psi(\widetilde{g}')d\widetilde{g}'.
\end{equation*}

\end{prop}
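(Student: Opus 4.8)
The plan is to start from the projection formula $\mathscr{P}_{\Pi} = \omega^{\infty}(d_{\Pi}\overline{\Theta_{\Pi}})$ recalled just above the statement, and to identify $\Theta_{\Pi'}(\Psi)$ with the trace of $\Pi'(\Psi)$ acting on the space $V(\Pi) \cong V_{\Pi} \otimes V_{\Pi'}$. Since $\S(X) = \bigoplus_{(\Pi,V_{\Pi})} V_{\Pi}\otimes V_{\Pi'}$ as a $\widetilde{\G}\times\widetilde{\G'}$-module and $\Pi$ is \emph{finite dimensional} (as $\widetilde{\G}$ is compact), the operator $\mathscr{P}_{\Pi}$ is a $\widetilde{\G'}$-equivariant projection onto $V(\Pi)$, and on that summand the $\widetilde{\G'}$-action is $\Id_{V_{\Pi}}\otimes \Pi'$. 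Hence for $\Psi\in\mathscr{C}^{\infty}_{c}(\widetilde{\G'})$ one has $\omega^{\infty}(\Psi)\circ\mathscr{P}_{\Pi}$ acting on $\S(X)$ as $\Id_{V_{\Pi}}\otimes \Pi'(\Psi)$ on the $\Pi$-summand and $0$ elsewhere, so that $\tr\bigl(\omega^{\infty}(\Psi)\mathscr{P}_{\Pi}\bigr) = d_{\Pi}\,\tr\bigl(\Pi'(\Psi)\bigr) = d_{\Pi}\,\Theta_{\Pi'}(\Psi)$.

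Next I would unwind the two operator integrals. Writing out $\mathscr{P}_{\Pi} = d_{\Pi}\int_{\widetilde{\G}}\overline{\Theta_{\Pi}(\widetilde{g})}\,\omega^{\infty}(\widetilde{g})\,d\widetilde{g}$ and $\omega^{\infty}(\Psi) = \int_{\widetilde{\G'}}\Psi(\widetilde{g'})\,\omega^{\infty}(\widetilde{g'})\,d\widetilde{g'}$, and using that $\omega^{\infty}$ is a representation so $\omega^{\infty}(\widetilde{g'})\omega^{\infty}(\widetilde{g}) = \omega^{\infty}(\widetilde{g'}\widetilde{g})$, together with the fact that $\widetilde{\G}$ and $\widetilde{\G'}$ commute inside $\widetilde{\Sp(W)}$ (noted in the remark above) so that $\widetilde{g'}\widetilde{g} = \widetilde{g}\widetilde{g'}$, I obtain
\begin{equation*}
\omega^{\infty}(\Psi)\,\mathscr{P}_{\Pi} \;=\; d_{\Pi}\displaystyle\int_{\widetilde{\G'}}\displaystyle\int_{\widetilde{\G}}\overline{\Theta_{\Pi}(\widetilde{g})}\,\Psi(\widetilde{g'})\,\omega^{\infty}(\widetilde{g}\widetilde{g'})\,d\widetilde{g}\,d\widetilde{g'}.
\end{equation*}
Taking traces, dividing by $d_{\Pi}$, and invoking the identification from the previous paragraph yields exactly the claimed formula, with the $d_{\Pi}$ factors cancelling.

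The main point requiring care is the justification of the trace and Fubini manipulations: one must check that $\omega^{\infty}(\Psi)\mathscr{P}_{\Pi}$ is genuinely trace class (it is, being a finite-rank-in-each-isotypic operator composed appropriately, or more simply because $\mathscr{P}_{\Pi}$ lands in a subspace on which $\Pi'$ is admissible and $\Psi$ is smooth compactly supported, so $\Pi'(\Psi)$ is trace class), and that the double integral of operator-valued functions converges in the appropriate (e.g.\ Hilbert--Schmidt or trace-norm) topology so that trace commutes with the integrals. This is where I would cite the smoothing properties of $\omega^{\infty}$ and the compactness of $\widetilde{\G}$ (which makes the $\widetilde{\G}$-integral a finite average against a bounded function $\overline{\Theta_{\Pi}}$), reducing everything to the standard fact that $\Theta_{\Pi'}(\Psi) = \tr\,\Pi'(\Psi)$ is a well-defined continuous distribution on $\mathscr{C}^{\infty}_{c}(\widetilde{\G'})$. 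Modulo these continuity checks the proof is essentially a bookkeeping computation, so I expect no serious obstacle beyond writing the functional-analytic justifications cleanly.
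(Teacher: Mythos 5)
Your proposal is correct and follows essentially the same route as the paper: projecting onto the $\Pi$-isotypic component via $\mathscr{P}_{\Pi} = \omega^{\infty}(d_{\Pi}\overline{\Theta_{\Pi}})$, identifying $\tr(\mathscr{P}_{\Pi}\,\omega^{\infty}(\Psi)) = d_{\Pi}\,\Theta_{\Pi'}(\Psi)$ through the decomposition $V(\Pi)\cong V_{\Pi}\otimes V_{\Pi'}$, and then unwinding the two operator integrals using the commutation of $\widetilde{\G}$ and $\widetilde{\G}'$ so the $d_{\Pi}$ factors cancel. The extra functional-analytic care you flag (trace class, Fubini) is simply left implicit in the paper's proof.
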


\begin{proof}

For such a function $\Psi$, we have:
\begin{equation*}
\tr(\mathscr{P}_{\Pi} \omega(\Psi)) = \tr(\Id_{V_{\Pi}} \otimes \Pi'(\Psi)) = d_{\Pi} \Theta_{\Pi'}(\Psi),
\end{equation*}
and then,
\begin{equation*}
\Theta_{\Pi'}(\Psi) = \frac{1}{d_{\Pi}} \tr(\mathscr{P}_{\Pi} \omega^{\infty}(\Psi)) = \tr \displaystyle\int_{\widetilde{\G}'} \Psi(\widetilde{g}') \mathscr{P}_{\Pi} \omega^{\infty}(\widetilde{g}') d\widetilde{g}' = \tr \displaystyle\int_{\widetilde{\G}'} \left(\displaystyle\int_{\widetilde{\G}}\overline{\Theta_{\Pi}(\widetilde{g})} \omega^{\infty}(\widetilde{g}\widetilde{g}') d\widetilde{g}\right) \Psi(\widetilde{g}')d\widetilde{g}'.
\end{equation*}

\end{proof}

\noindent Using that $\Theta$ is the character of $\omega$, we get formally:
\begin{equation*}
\Theta_{\Pi'}(\widetilde{g}) = \displaystyle\int_{\widetilde{\G}}\overline{\Theta_{\Pi}(\widetilde{g})} \Theta(\widetilde{g}\widetilde{g}') d\widetilde{g} \qquad (\widetilde{g}' \in \widetilde{\G}').
\end{equation*}
Because the character $\Theta$ is not continuous, the second member of the previous equation could not make sense. To avoid this problem, we use the Oscillator semigroup introduced by R. Howe (see \cite{HOW3}).

\section{Howe's Oscillator semigroup}

\label{Oscillator}

\noindent Let $(W, \langle \cdot, \cdot\rangle)$ be a (finite dimensional) real symplectic vector space and $(W_{\mathbb{C}}, \langle\cdot, \cdot\rangle)$ its complexification. For every $w \in W_{\mathbb{C}}$, we consider the decomposition $w = a + ib, a, b \in W$ and we denote by $\overline{w} = a - ib$ the conjugate with respect to the decomposition $W_{\mathbb{C}} = W \oplus iW$. By extension, we get a symplectic form $\langle \cdot, \cdot \rangle$ on $W_{\mathbb{C}}$.

\begin{lemme}

\noindent The form $\H: W_{\mathbb{C}} \times W_{\mathbb{C}} \to \mathbb{C}$ defined by
\begin{equation}
\H(w, w^{'}) = i\langle w, \overline{w}'\rangle
\label{HermitianForm}
\end{equation}
is hermitian.

\end{lemme}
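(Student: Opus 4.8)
The plan is to verify directly the defining property of a Hermitian form: that $\H$ is linear in the first argument, conjugate-linear in the second, and satisfies $\overline{\H(w,w')} = \H(w',w)$. The first two properties are immediate once one observes that $w \mapsto \overline{w}$ is conjugate-linear (it is $\mathbb{R}$-linear on $W_{\mathbb{C}} = W \oplus iW$ and sends $iw$ to $-i\overline{w}$) while the extended symplectic form $\langle \cdot,\cdot\rangle$ on $W_{\mathbb{C}}$ is $\mathbb{C}$-bilinear. Hence $w \mapsto \H(w,w') = i\langle w, \overline{w}'\rangle$ is $\mathbb{C}$-linear, and $w' \mapsto \H(w,w')$ is the composition of the conjugate-linear map $w' \mapsto \overline{w}'$ with the $\mathbb{C}$-linear map $u \mapsto i\langle w, u\rangle$, hence conjugate-linear.

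The only point requiring a small computation is the conjugate-symmetry. First I would note that for the $\mathbb{C}$-bilinear extension of a real symplectic form one has $\overline{\langle u, v\rangle} = \langle \overline{u}, \overline{v}\rangle$ for all $u, v \in W_{\mathbb{C}}$, since this identity holds when $u, v \in W$ (both sides are real) and both sides are conjugate-linear in each argument, so it extends by $\mathbb{R}$-linearity. Using this together with the antisymmetry $\langle a, b\rangle = -\langle b, a\rangle$ of the symplectic form, I would compute
\begin{equation*}
\overline{\H(w,w')} = \overline{i\langle w, \overline{w}'\rangle} = -i\,\overline{\langle w, \overline{w}'\rangle} = -i\langle \overline{w}, w'\rangle = i\langle w', \overline{w}\rangle = \H(w', w),
\end{equation*}
which is exactly the required relation. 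In particular $\H(w,w) = i\langle w, \overline{w}\rangle$ is real for every $w \in W_{\mathbb{C}}$.

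I do not anticipate a genuine obstacle here: the statement is essentially a bookkeeping exercise with the conjugation $w \mapsto \overline{w}$ and the bilinear extension of $\langle \cdot,\cdot\rangle$. The one thing to be careful about is the placement of the factor $i$ and the sign conventions — dropping the $i$, or using $\langle \overline{w}, w'\rangle$ instead of $\langle w, \overline{w}'\rangle$ in the definition, would produce a skew-Hermitian rather than a Hermitian form — so the verification above is really just a consistency check that the chosen normalization in \eqref{HermitianForm} is the correct one. (One might also remark, for later use, that the real part of $\H$ recovers the inner product $(\cdot,\cdot)$ associated to $J$ and the imaginary part recovers $\langle\cdot,\cdot\rangle$, but that is not needed for the present lemma.)
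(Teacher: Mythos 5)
Your verification is correct and is precisely the ``straightforward verification'' that the paper leaves to the reader: sesquilinearity follows from the $\mathbb{C}$-bilinear extension of $\langle\cdot,\cdot\rangle$ and the conjugate-linearity of $w\mapsto\overline{w}$, and conjugate symmetry follows from $\overline{\langle u,v\rangle}=\langle\overline{u},\overline{v}\rangle$ together with antisymmetry, exactly as you compute. The only caveat is your final parenthetical (real/imaginary parts of $\H$ recovering $(\cdot,\cdot)$ and $\langle\cdot,\cdot\rangle$), which as stated is imprecise --- on real vectors $w,w'\in W$ one has $\H(w,w')=i\langle w,w'\rangle$, purely imaginary --- but you rightly flag it as not needed for the lemma.
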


\begin{proof}

Straightforward verification.

\end{proof}

\noindent We define now the subset $\Sp(W_{\mathbb{C}})^{++}$ of $\Sp(W_{\mathbb{C}})$ by:
\begin{equation}
\Sp(W_{\mathbb{C}})^{++} = \left\{g \in \Sp(W_{\mathbb{C}}) \thinspace ; \thinspace \H(w, w) > \H(g(w), g(w)), (\forall w \in W_{\mathbb{C}} \setminus \{0\})\right\}.
\label{SpWC++}
\end{equation}

\noindent Similarly, we denote by $\mathfrak{sp}(W_{\mathbb{C}})^{++}$ the subset of $\End(W_{\mathbb{C}})$ given by:
\begin{equation*}
\mathfrak{sp}(W_{\mathbb{C}})^{++} = \left\{z = x + iy \thinspace ; \thinspace x, y \in \mathfrak{sp}(W), \det(z-1) \neq 0, \langle yw, w\rangle > 0, w \in W \setminus \{0\}\right\}.
\end{equation*}

\begin{lemme}

Fix an element $z = x + iy$ with $x, y \in \mathfrak{sp}(W)$ such that $\det(z-1) \neq 0$. Then, 
\begin{equation*}
\H(w, w) > \H(c(z)w, c(z)w)  \qquad (\forall w \in W_{\mathbb{C}} \setminus \{0\})
\end{equation*}
if and only if
\begin{equation*}
\langle yw, w\rangle >0 \qquad (\forall w \in W \setminus \{0\}).
\end{equation*}
\label{Semi-groupe-Cayley}
Finally, we obtain $c(\mathfrak{sp}(W_{\mathbb{C}})^{++}) = \Sp(W_{\mathbb{C}})^{++}$. 
\end{lemme}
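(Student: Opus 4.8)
The plan is to prove the equivalence by a direct Cayley-transform computation which converts the quadratic inequality on $W_{\mathbb{C}}$ into the positive-definiteness of $\langle y\,\cdot\,,\cdot\rangle$, and then to deduce the identity $c(\mathfrak{sp}(W_{\mathbb{C}})^{++}) = \Sp(W_{\mathbb{C}})^{++}$ from it, using the elementary fact that an element of $\Sp(W_{\mathbb{C}})^{++}$ cannot fix a nonzero vector.

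For the equivalence, write $g = c(z) = (z+1)(z-1)^{-1}$, which is defined exactly because $\det(z-1)\neq 0$; recall the standard facts that $g \in \Sp(W_{\mathbb{C}})$ and $g-1 = 2(z-1)^{-1}$ is invertible. The key device is the substitution $w = (z-1)u$, a bijection of $W_{\mathbb{C}}$ satisfying $gw = (z+1)u$ and $\overline{w} = (\overline{z}-1)\overline{u}$. Substituting and expanding $\langle(z-1)u,(\overline{z}-1)\overline{u}\rangle - \langle(z+1)u,(\overline{z}+1)\overline{u}\rangle$, and using that both $z = x+iy$ and $\overline{z} = x-iy$ lie in $\mathfrak{sp}(W_{\mathbb{C}})$ (so $\langle z\,\cdot\,,\cdot\rangle$ and $\langle\overline{z}\,\cdot\,,\cdot\rangle$ are skew for $\langle\cdot,\cdot\rangle$), all the quadratic-in-$z$ terms and all the $z$-free terms cancel, leaving only $-2\langle zu,\overline{u}\rangle - 2\langle u,\overline{z}\,\overline{u}\rangle = -4i\langle yu,\overline{u}\rangle$; multiplying by $i$ one obtains
\[
\H(w,w) - \H(gw,gw) = 4\,\langle y u,\overline{u}\rangle .
\]
Writing $u = p + iq$ with $p,q \in W$ and using that $\langle y\,\cdot\,,\cdot\rangle$ is a \emph{symmetric} bilinear form on $W$ (because $y\in\mathfrak{sp}(W)$), the imaginary cross terms cancel and $\langle yu,\overline{u}\rangle = \langle yp,p\rangle + \langle yq,q\rangle$, a real number. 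Since $u\mapsto w$ is onto $W_{\mathbb{C}}$ and $u=0 \iff (p,q)=(0,0)$, the condition ``$\H(w,w) > \H(gw,gw)$ for all $w\neq 0$'' is equivalent to ``$\langle yp,p\rangle + \langle yq,q\rangle > 0$ for all $(p,q)\neq(0,0)$'', which (take $q=0$ for one direction; add two nonnegative terms for the other) is equivalent to $\langle yw,w\rangle > 0$ for all $w\in W\setminus\{0\}$. This proves the ``if and only if''.

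For the last assertion, $c(\mathfrak{sp}(W_{\mathbb{C}})^{++}) \subseteq \Sp(W_{\mathbb{C}})^{++}$ is immediate from the equivalence once one recalls $c(z)\in\Sp(W_{\mathbb{C}})$. Conversely, let $g\in\Sp(W_{\mathbb{C}})^{++}$. If $gw = w$ with $w\neq 0$ then $\H(w,w) = \H(gw,gw)$, contradicting the definition of $\Sp(W_{\mathbb{C}})^{++}$; hence $1$ is not an eigenvalue of $g$, so $z := (g-1)^{-1}(g+1)$ is a well-defined element of $\mathfrak{sp}(W_{\mathbb{C}})$ with $c(z) = g$ and $z-1 = 2(g-1)^{-1}$ invertible. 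Decomposing $z = x+iy$ with $x,y\in\mathfrak{sp}(W)$ (possible since $\mathfrak{sp}(W_{\mathbb{C}}) = \mathfrak{sp}(W)\oplus i\,\mathfrak{sp}(W)$) and applying the equivalence to $g = c(z)\in\Sp(W_{\mathbb{C}})^{++}$ gives $\langle yw,w\rangle > 0$ for all $w\neq 0$, i.e.\ $z\in\mathfrak{sp}(W_{\mathbb{C}})^{++}$, so $g\in c(\mathfrak{sp}(W_{\mathbb{C}})^{++})$. Hence the two sets coincide.

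The main obstacle I anticipate is purely bookkeeping: controlling the interaction between the complex-bilinear form $\langle\cdot,\cdot\rangle$ on $W_{\mathbb{C}}$, the conjugation $w\mapsto\overline{w}$ and the Hermitian form $\H$, making sure that $\overline{(z-1)u} = (\overline{z}-1)\overline{u}$ and that the cross terms in the expansion really do cancel, and that the sign of the final constant ($+4$) comes out right. The facts I will quote as standard are that $c$ maps $\mathfrak{sp}(W_{\mathbb{C}})$ (off the locus $\det(z-1)=0$) into $\Sp(W_{\mathbb{C}})$, that $c$ is an involution with $g-1 = 2(z-1)^{-1}$, and that $\mathfrak{sp}(W_{\mathbb{C}}) = \mathfrak{sp}(W)\oplus i\,\mathfrak{sp}(W)$.
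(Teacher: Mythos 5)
Your proof is correct and follows essentially the same route as the paper: the substitution $w=(z-1)u$ is just the inverse of the paper's change of variable $w'=(z-1)^{-1}w$, and both reduce the inequality to $4\langle yu,\overline{u}\rangle>0$ and then split $u$ into real and imaginary parts. The only difference is that you also spell out the converse inclusion $\Sp(W_{\mathbb{C}})^{++}\subseteq c(\mathfrak{sp}(W_{\mathbb{C}})^{++})$ via the observation that an element of $\Sp(W_{\mathbb{C}})^{++}$ has no fixed nonzero vector, a point the paper leaves implicit.
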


\begin{proof}

Fix $z = x + iy$, with $x, y \in \mathfrak{sp}(W)$. We have $\H(c(z)w, c(z)w) = \H(\overline{c(z)}^{-1}c(z)w, w)$ and then $\H(w, w) > \H(c(z)w, c(z)w) \Leftrightarrow \H((1 - \overline{c(z)}^{-1}c(z))w, w) > 0$. Or,
\begin{eqnarray*}
1 - \overline{c(z)}^{\,-1}c(z)) & = & 1 - \left((\bar{z} + 1)(\bar{z}-1)^{-1}\right)^{-1}(z+1)(z-1)^{-1} = 1 - (\bar{z}-1)(\bar{z}+1)^{-1}(z+1)(z-1)^{-1} \\
                                           & = & 1 - (\bar{z}+1)^{-1}(\bar{z}-1)(z+1)(z-1)^{-1} \\
                                           & = & (\bar{z}+1)^{-1}(\bar{z}+1)(z-1)(z-1)^{-1} - (\bar{z}+1)^{-1}(\bar{z}-1)(z+1)(z-1)^{-1} \\
                                           & = & (\bar{z}+1)^{-1}\left( (\bar{z}+1)(z-1) - (\bar{z}-1)(z+1)\right)(z-1)^{-1}
\end{eqnarray*}                                           
By definition of $z$, we get $(\bar{z}+1)(z-1) - (\bar{z}-1)(z+1) = 4iy$. So, $1 - \overline{c(z)}^{-1}c(z) = 4i(\bar{z}+1)^{-1}y(z-1)^{-1}$.
Then, for all $w \in W_{\mathbb{C}} \setminus \{0\}$, we get:
\begin{eqnarray*}
\H(w, w) > \H(c(z)w, c(z)w) & \Leftrightarrow & \H((1 - \overline{c(z)}^{-1}c(z))w, w) > 0  \Leftrightarrow  4i\H((\bar{z}+1)^{-1}y(z-1)^{-1}w, w) > 0 \\
                                         & \Leftrightarrow & -4\langle(\bar{z}+1)^{-1}y(z-1)^{-1}w, \overline{w}\rangle > 0  \Leftrightarrow  -4 \langle y(z-1)^{-1}w, (-\bar{z}+1)^{-1}\overline{w}\rangle > 0 \\
                                         & \Leftrightarrow & 4\langle yw^{'}, \overline{w}'\rangle > 0
\end{eqnarray*}
with $w^{'} = (z-1)^{-1}w$. Because $\langle yw^{'}, \overline{w}'\rangle \in \mathbb{R}^{*}_{+}$, by writing $w'$ as $w^{'} = w^{'}_{1} + iw^{'}_{2}$, we get: 
\begin{equation*}
\langle yw^{'}, \overline{w}'\rangle = \langle y(w^{'}_{1}), w^{'}_{1}\rangle + \langle y(w^{'}_{2}), w^{'}_{2}\rangle.
\end{equation*}

\end{proof}

\begin{prop}

The set $\Sp(W_{\mathbb{C}})^{++}$ is a subsemigroup of $\Sp(W_{\mathbb{C}})$, which does not contain the identity but stable under $g \to \bar{g}^{-1}$. Moreover, we have
\begin{equation}
\Sp(W_{\mathbb{C}})^{++}. \Sp(W) = \Sp(W). \Sp(W_{\mathbb{C}})^{++} \subseteq \Sp(W_{\mathbb{C}})^{++}
\label{Semi-groupe1}
\end{equation}
and the set $\Sp(W_{\mathbb{C}})^{++} \cup \Sp(W)$ is a subsemigroup of $\Sp(W_{\mathbb{C}})$. To conclude, the symplectic group $\Sp(W)$ is contained in the closure of $\Sp(W_{\mathbb{C}})^{++}$.

\end{prop}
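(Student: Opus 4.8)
The plan is to establish the four assertions in sequence, using Lemma \ref{Semi-groupe-Cayley} to pass between the semigroup $\Sp(W_{\mathbb{C}})^{++}$ and its Cayley-transform picture $\mathfrak{sp}(W_{\mathbb{C}})^{++}$ whenever that is convenient, but working directly with the defining inequality $\H(w,w) > \H(g(w),g(w))$ for the purely group-theoretic closure properties. First I would check stability under $g \mapsto \bar g^{-1}$. For $g \in \Sp(W_{\mathbb{C}})^{++}$ the condition $\H(w,w) > \H(g(w),g(w))$ for all $w \neq 0$ can be rewritten, substituting $w = g^{-1}(v)$, as $\H(g^{-1}(v), g^{-1}(v)) > \H(v,v)$; one then notes that $\H(\bar h(w), \bar h(w)) = \overline{\H(h(\bar w), h(\bar w))}$ together with the fact that $\H(u,u)$ is real (the form is Hermitian) shows $\H$ is preserved under the bar operation on arguments, so $\bar g^{-1}$ satisfies the defining inequality and lies in $\Sp(W_{\mathbb{C}})^{++}$. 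That $\Sp(W_{\mathbb{C}})^{++}$ is a subsemigroup is immediate from the definition: if $g, h$ both strictly decrease $\H$, then for $w \neq 0$ we have $h(w) \neq 0$, hence $\H(w,w) > \H(h(w),h(w)) > \H(g(h(w)),g(h(w)))$, so $gh \in \Sp(W_{\mathbb{C}})^{++}$. It does not contain the identity since $\H(w,w) > \H(w,w)$ is impossible.

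Next I would prove the inclusion \eqref{Semi-groupe1}. For $g \in \Sp(W)$ and $w \in W_{\mathbb{C}}$, write $w = a + ib$ with $a, b \in W$; since $g$ is a real symplectic map, $g(w) = g(a) + i g(b)$ and $\overline{g(w)} = g(\bar w)$, so $\H(g(w), g(w)) = i\langle g(w), \overline{g(w)}\rangle = i \langle g(a+ib), g(a-ib)\rangle = i\langle a+ib, a-ib\rangle = \H(w,w)$ because $g$ preserves the symplectic form; thus elements of $\Sp(W)$ are $\H$-isometries. Consequently, if $p \in \Sp(W_{\mathbb{C}})^{++}$ and $g \in \Sp(W)$, then for $w \neq 0$ we have $\H(w,w) = \H(g(w), g(w)) > \H(p(g(w)), p(g(w)))$, giving $pg \in \Sp(W_{\mathbb{C}})^{++}$; symmetrically $gp \in \Sp(W_{\mathbb{C}})^{++}$. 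This shows $\Sp(W_{\mathbb{C}})^{++}.\Sp(W) \subseteq \Sp(W_{\mathbb{C}})^{++}$ and $\Sp(W).\Sp(W_{\mathbb{C}})^{++} \subseteq \Sp(W_{\mathbb{C}})^{++}$; the equality of the two products then follows because each is contained in the common overgroup $\Sp(W_{\mathbb{C}})^{++}$ and one checks $g p = (g p g^{-1}) g$ with $g p g^{-1} \in \Sp(W_{\mathbb{C}})^{++}$ by the isometry property (conjugation by a real symplectic map preserves the defining inequality), which writes any element of one product as an element of the other. That $\Sp(W_{\mathbb{C}})^{++} \cup \Sp(W)$ is a subsemigroup is now a case check: products within $\Sp(W_{\mathbb{C}})^{++}$ stay there, products within $\Sp(W)$ stay there, and mixed products land in $\Sp(W_{\mathbb{C}})^{++}$ by \eqref{Semi-groupe1}.

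Finally, for the density statement — that $\Sp(W) \subseteq \overline{\Sp(W_{\mathbb{C}})^{++}}$ — I would argue through the Lie-algebra model. Fix $g \in \Sp(W)$. The idea is to exhibit a path in $\Sp(W_{\mathbb{C}})^{++}$ converging to $g$. Using $c(\mathfrak{sp}(W_{\mathbb{C}})^{++}) = \Sp(W_{\mathbb{C}})^{++}$ from Lemma \ref{Semi-groupe-Cayley}, and the fact that $\Sp(W)^{c}$ (the Cayley domain) is dense in $\Sp(W)$, it suffices to treat $g \in \Sp(W)^c$, writing $g = c(x)$ with $x \in \mathfrak{sp}(W)$; then $z_t = x + it J_0$ for a suitable fixed $J_0 \in \mathfrak{sp}(W)$ with $\langle J_0 w, w\rangle > 0$ (such as the $J$ fixed in the construction of the metaplectic representation, after checking $\langle J w, w\rangle$ has the right sign or replacing $J$ by $-J$) satisfies $z_t \in \mathfrak{sp}(W_{\mathbb{C}})^{++}$ for all small $t > 0$ provided $\det(z_t - 1) \neq 0$, which holds for generic small $t$ since $\det(z-1)$ is a nonzero polynomial in $t$ (nonvanishing at $t=0$ because $g \in \Sp(W)^c$). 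As $t \to 0^+$, $c(z_t) \to c(x) = g$ by continuity of the Cayley transform, so $g \in \overline{\Sp(W_{\mathbb{C}})^{++}}$; for $g \notin \Sp(W)^c$ one approximates $g$ first by elements of $\Sp(W)^c$ and then uses a diagonal argument. The main obstacle I anticipate is precisely this last part: one must be careful that the perturbation $x + itJ_0$ genuinely lands in $\mathfrak{sp}(W_{\mathbb{C}})^{++}$ (the condition $\det(z_t-1)\neq 0$ must be maintained along the approach, and the sign condition on $y = tJ_0$ is automatic for $t>0$ but requires choosing $J_0$ with the correct definiteness), and that the two-step approximation for $g$ outside the Cayley domain is assembled correctly; everything else is formal manipulation of the Hermitian form $\H$ and its invariance under real symplectic maps.
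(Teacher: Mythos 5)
Most of your argument is sound and runs close to the paper's: the semigroup property, the observation that elements of $\Sp(W)$ are isometries of $\H$ (which gives \eqref{Semi-groupe1}, the equality of the two products via conjugation, and the union statement), and the exclusion of the identity are all fine. Your density argument through the Cayley picture also works, though it is more roundabout than the paper's one-line trick $g=-c(0)g=\lim_{y\to 0}-c(iy)g$, which uses \eqref{Semi-groupe1} to absorb the factor $-1\in\Sp(W)$ and needs neither the restriction to the Cayley domain $\Sp(W)^{c}$ nor a diagonal argument; note also that in your version $\det(z_{t}-1)\neq 0$ holds for \emph{all} small $t>0$ by continuity (it is nonzero at $t=0$), so no genericity discussion is needed.

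The genuine problem is the step proving stability under $g\mapsto\bar g^{-1}$. Your key identity is false: conjugation does not preserve the quadratic form $w\mapsto \H(w,w)$, it reverses its sign. Indeed $\H(\bar u,\bar u)=i\langle \bar u,u\rangle=-i\langle u,\bar u\rangle=-\H(u,u)$, consistent with the fact that $\H$ vanishes identically on $W$ and has split signature on $W_{\mathbb{C}}$. Worse, if the invariance you assert were true, your own chain of inequalities would run the wrong way: from $\H(g^{-1}v,g^{-1}v)>\H(v,v)$ and ``$\H$ preserved by the bar'' you would get $\H(\bar g^{-1}w,\bar g^{-1}w)=\H(g^{-1}\bar w,g^{-1}\bar w)>\H(\bar w,\bar w)=\H(w,w)$, i.e. $\bar g^{-1}$ \emph{expands} $\H$ and hence does not lie in $\Sp(W_{\mathbb{C}})^{++}$. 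The claim is rescued precisely by the sign flip: $\H(\bar g^{-1}w,\bar g^{-1}w)=-\H(g^{-1}\bar w,g^{-1}\bar w)<-\H(\bar w,\bar w)=\H(w,w)$ for $w\neq 0$, so $\bar g^{-1}\in\Sp(W_{\mathbb{C}})^{++}$. (The paper avoids this computation by working in the Cayley picture: $\overline{c(z)}^{\,-1}=c(-\bar z)$ and $z\mapsto-\bar z$ preserves $\mathfrak{sp}(W_{\mathbb{C}})^{++}$ because it leaves the imaginary part $y$ unchanged.) Correct that identity and the inference drawn from it; the rest of your proposal stands.
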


\begin{proof}

Fix $g$ and $g'$ in $\Sp(W_{\mathbb{C}})^{++}$. Obviously, $gg' \in \Sp(W_{\mathbb{C}})$. For every $w \in W_{\mathbb{C}}$, we have:
\begin{equation*}
\H(gg'w, gg'w) < \H(g'w, g'w) < H(w, w),
\end{equation*}
which imply that $gg' \in \Sp(W_{\mathbb{C}})^{++}$. The subspace $\mathfrak{sp}(W_{\mathbb{C}})^{++}$ is stable under the map $z \to -\bar{z}$ and $\overline{c(z)}^{\,-1} = c(-\bar{z})$. Then, if $g \in \Sp(W_{\mathbb{C}})^{++}$, we get $\bar{g}^{-1} \in \Sp(W_{\mathbb{C}})^{++}$.

\noindent Now, fix $g \in \Sp(W_{\mathbb{C}})^{++}$ and $h \in \Sp(W)$. For all $w \in W_{\mathbb{C}}$, we have $\overline{h(w)} = h(\bar{w})$ and then:
\begin{equation*}
H(gh(w), \overline{gh(w)}) < H(h(w), \overline{h(w)}) = i\langle h(w), \overline{h(w)}\rangle = i\langle h(w), h(\bar{w})\rangle = H(w, w).
\end{equation*}
In particular, $gh \in \Sp(W_{\mathbb{C}})^{++}$. Finally, for every element $g \in \Sp(W)$,
\begin{equation*}
g = -c(0)g = \lim_{\underset{\langle y \cdot, \cdot \rangle > 0}{y \to 0}} -c(iy)g
\end{equation*}
which prove that every elements of $\Sp(W)$ is a limit of elements in the semigroup $\Sp(W_{\mathbb{C}})^{++}$.

\end{proof}

\begin{rema}

Let $z = X+iY$ with $z \in \mathfrak{sp}(W_{\mathbb{C}})^{++}$. Then, for all $w \in W$,
\begin{equation*}
\chi_{z}(w) = e^{\frac{i\pi}{2}w^{t}J(X+iY)w} = e^{\frac{i\pi}{2}w^{t}JXw} e^{-\frac{\pi}{2}w^{t}JYw}.
\end{equation*}
The matrix $Y \in \mathfrak{sp}(W)$, the form $\langle Y \cdot, \cdot\rangle$ is positive and $JY$ is symmetric and positive definite. Then, there exists a diagonal matrix $D = \diag(d_{1}, \ldots, d_{2n})$ and a matrix $O \in \O(2n, \mathbb{R})$ such that $JY = O^{t}DO$. So, 
\begin{eqnarray*}
\displaystyle\int_{W} \chi_{iY}(w) dw & = & \displaystyle\int_{W} e^{-\frac{\pi}{2}w^{t}JYw} dw = \displaystyle\int_{W} e^{-\frac{\pi}{2}w^{t}O^{t}DOw} dw = \displaystyle\int_{W} e^{-\frac{\pi}{2}Y^{t}DY} |\det(O)| dY \\
                                                          & = & \displaystyle\int_{W} e^{-\frac{\pi}{2}\sum\limits_{k=1}^{2n} d_{k}Y^{2}_{k}} |\det(O)| dY = \prod\limits_{k=1}^{2n} \displaystyle\int_{\mathbb{R}} e^{-\frac{\pi}{2}d_{k}Y^{2}_{k}} dY_{k} = \prod\limits_{k=1}^{2n} \cfrac{1}{\sqrt{d_{k}}} = \det^{-\frac{1}{2}}(D)
\end{eqnarray*}
Using that $|\chi_{X+iY}(w)| = e^{-\frac{\pi}{2}w^{t}JYw}$, we get that the integral 
\begin{equation*}
\displaystyle\int_{W} \chi_{X+iY}(w) dw
\end{equation*}
is absolutely convergent. More precisely, we get:
\begin{equation*}
\displaystyle\int_{W} \chi_{X+iY}(w) dw = \det^{-\frac{1}{2}}\left(\frac{1}{2}(X+iY)\right).
\end{equation*}     
From now on, we denote by $\Lambda(X+iY)$ the previous determinant, i.e.
\begin{equation}
\Lambda(X+iY) = \det^{-\frac{1}{2}}\left(\frac{1}{2}(X+iY)\right).
\label{EquationLambda}
\end{equation}                                           

\end{rema}

\noindent Even if the complex symplectic group $\Sp(W_{\mathbb{C}})$ is simply connected, the complex manifold $\Sp(W_{\mathbb{C}})^{++}$ is not simply connected. We define on $\Sp(W_{\mathbb{C}})^{++}$ a non-trivial cover, denoted by $\widetilde{\Sp(W_{\mathbb{C}})}^{++}$, by
\begin{equation*}
\widetilde{\Sp(W_{\mathbb{C}})}^{++} = \left\{(g, \xi) \thinspace ; \thinspace g \in \Sp(W_{\mathbb{C}})^{++}, \xi^{2} = \det(i(g-1))^{-1}\right\},
\end{equation*}
and let $C: \Sp(W_{\mathbb{C}})^{++} \times \Sp(W_{\mathbb{C}})^{++} \to \mathbb{C}$ defined by:
\begin{equation*}
C(g_{1}, g_{2}) = \det^{-\frac{1}{2}}\left(\frac{1}{2i}(c(g_{1}) + c(g_{2}))\right).
\end{equation*}

\begin{theo}

The function $\Theta : \widetilde{\Sp(W_{\mathbb{C}})}^{++} \ni (g, \xi) \to \xi \in \mathbb{C}$ is holomorphic, and we have the following equality:
\begin{equation}
\cfrac{\Theta(\widetilde{g_{1}}\widetilde{g_{2}})}{\Theta(\widetilde{g_{1}})\Theta(\widetilde{g_{2}})} = C(g_{1}, g_{2}) \qquad \left(\widetilde{g_{1}}, \widetilde{g_{2}} \in \widetilde{\Sp(W_{\mathbb{C}})}^{++} \cup \widetilde{\Sp(W)}\right).
\label{Equation Theta}
\end{equation}

\noindent Moreover, for every functions $\Psi \in \mathscr{C}^{\infty}_{c}(\widetilde{\Sp(W)})$, we get:
\begin{equation}
\displaystyle\int_{\widetilde{\Sp(W)}} \Theta(\widetilde{g})\Psi(\widetilde{g}) d\mu_{\widetilde{\Sp(W)}}(\widetilde{g}) = \lim\limits_{\underset{\widetilde{p} \in \widetilde{\Sp(W_{\mathbb{C}})}^{++}}{\widetilde{p} \to 1}} \displaystyle\int_{\widetilde{\Sp(W)}} \Theta(\widetilde{p}\widetilde{g}) \Psi(\widetilde{g}) d\mu_{\widetilde{\Sp(W)}}(\widetilde{g}).
\label{Limite Caractere}
\end{equation}
\label{Theoreme section 3 important}
\end{theo}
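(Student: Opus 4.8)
The plan is to prove the three assertions — holomorphy, the cocycle identity, and the limit formula — in turn; the first two are algebraic and the last is functional-analytic.

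\textbf{Holomorphy.} On $\Sp(W_{\mathbb{C}})^{++}$ the operator $g-1$ is invertible: if $g(w)=w$ for some $w\neq 0$ then $\H(w,w)>\H(g(w),g(w))=\H(w,w)$, which is absurd. Hence $g\mapsto\det(i(g-1))^{-1}$ is holomorphic and nowhere zero on $\Sp(W_{\mathbb{C}})^{++}$, so the equation $\xi^{2}=\det(i(g-1))^{-1}$ cuts out an unramified double cover which is a complex manifold, and on it the coordinate $(g,\xi)\mapsto\xi$ is a local holomorphic square root of $g\mapsto\det(i(g-1))^{-1}$; this is exactly $\Theta$.

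\textbf{Cocycle identity.} With the group law $(g_{1},\xi_{1})(g_{2},\xi_{2})=(g_{1}g_{2},\xi_{1}\xi_{2}C(g_{1},g_{2}))$ on $\widetilde{\Sp(W_{\mathbb{C}})}^{++}$, the equality \eqref{Equation Theta} is immediate from the definition of $\Theta$ for $\widetilde{g}_{1},\widetilde{g}_{2}$ in the semigroup, once that law is seen to be well defined. For the latter I would use the identity
\begin{equation*}
c(g_{1})+c(g_{2})=(g_{1}+1)(g_{1}-1)^{-1}+(g_{2}+1)(g_{2}-1)^{-1}=2(g_{1}-1)^{-1}(g_{1}g_{2}-1)(g_{2}-1)^{-1},
\end{equation*}
which on taking determinants ($2n=\dim_{\mathbb{R}}W$) yields
\begin{equation*}
C(g_{1},g_{2})^{2}=\det^{-1}\!\Bigl(\tfrac{1}{2i}(c(g_{1})+c(g_{2}))\Bigr)=\frac{\det(i(g_{1}-1))\,\det(i(g_{2}-1))}{\det(i(g_{1}g_{2}-1))},
\end{equation*}
so that $\xi_{1}\xi_{2}C(g_{1},g_{2})$ indeed squares to $\det(i(g_{1}g_{2}-1))^{-1}$; associativity and the correct branch of $C$ then follow from the fact that this $C$ is the restriction to the semigroup of the metaplectic cocycle of Section \ref{Metaplectic} (equivalently, of the formula $\Theta(\widetilde{g})=\det(i(g-1))^{-\frac12}$ recalled in the Remark). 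To extend \eqref{Equation Theta} to the case where $\widetilde{g}_{1}$ or $\widetilde{g}_{2}$ lies in $\widetilde{\Sp(W)}$ I would argue by continuity: $\Sp(W)^{c}$ lies in the closure of $\Sp(W_{\mathbb{C}})^{++}$, both sides are continuous in the relevant variables where all Cayley transforms make sense, and $\Theta(\widetilde{g})=\det(i(g-1))^{-\frac12}$ on $\widetilde{\Sp(W)^{c}}$, so the identity passes to the limit.

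\textbf{Limit formula.} For $\widetilde{p}\in\widetilde{\Sp(W_{\mathbb{C}})}^{++}$ the operator $\omega(\widetilde{p})=\Op\circ\mathscr{K}\circ T(\widetilde{p})$ is a Gaussian-kernel operator of trace class, and for $\widetilde{g}\in\widetilde{\Sp(W)}$ the product $\widetilde{p}\widetilde{g}$ again lies in the semigroup with $\tr\,\omega(\widetilde{p}\widetilde{g})=\Theta(\widetilde{p}\widetilde{g})$. Since $\widetilde{g}\mapsto\omega(\widetilde{p})\omega(\widetilde{g})$ is continuous into the trace-class operators with uniformly bounded trace norm and $\Psi$ has compact support, one may pull the trace out of the integral:
\begin{equation*}
\displaystyle\int_{\widetilde{\Sp(W)}}\Theta(\widetilde{p}\widetilde{g})\Psi(\widetilde{g})\,d\mu_{\widetilde{\Sp(W)}}(\widetilde{g})=\tr\bigl(\omega(\widetilde{p})\,\omega(\Psi)\bigr),\qquad\omega(\Psi)=\displaystyle\int_{\widetilde{\Sp(W)}}\Psi(\widetilde{g})\,\omega(\widetilde{g})\,d\widetilde{g}.
\end{equation*}
Here $\omega(\Psi)$ is of trace class — this is exactly what makes $\Psi\mapsto\tr\,\omega(\Psi)$ the global character $\Theta$ — while $\|\omega(\widetilde{p})\|\le 1$ and $\omega(\widetilde{p})\to\Id$ strongly as $\widetilde{p}\to 1$, the associated Gaussian kernels converging to the identity kernel. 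A dominated-convergence argument applied to a singular-value expansion of $\omega(\Psi)$ upgrades this to $\omega(\widetilde{p})\omega(\Psi)\to\omega(\Psi)$ in trace norm, whence $\tr(\omega(\widetilde{p})\omega(\Psi))\to\tr\,\omega(\Psi)=\int_{\widetilde{\Sp(W)}}\Theta(\widetilde{g})\Psi(\widetilde{g})\,d\mu_{\widetilde{\Sp(W)}}(\widetilde{g})$, which is \eqref{Limite Caractere}.

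\textbf{Main obstacle.} The algebra of the first two parts is routine; the crux is the last part, namely establishing that $\omega$ really carries the oscillator semigroup into the trace-class operators with $\omega(\widetilde{p})\to\Id$ strongly as $\widetilde{p}\to 1$, and then converting strong convergence into trace-norm convergence against the fixed trace-class operator $\omega(\Psi)$. For these points I would lean on Howe's analysis of the oscillator semigroup (\cite{HOW3}) and on the explicit kernels furnished by $\Op\circ\mathscr{K}\circ T$.
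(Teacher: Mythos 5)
Your proposal is correct in substance, but for the heart of the theorem -- the limit formula \eqref{Limite Caractere} -- it follows a genuinely different route from the paper. The paper works entirely on the symbol side: it assumes $\supp\Psi$ lies in a Cayley chart, writes $\widetilde{p}\,\widetilde{c}(0)=\widetilde{c}(iy)$, factors $\Theta(\widetilde{c}(iy)\widetilde{c}(x))=\Theta(\widetilde{c}(iy))\Theta(\widetilde{c}(x))\Lambda(x+iy)$, converts $\int\Lambda(x+iy)\psi(x)\,dx$ into $\int_{W}\chi_{iy}(w)\widehat{\psi}\left(\tfrac14\tau(w)\right)dw$ via the Fourier transform and the moment map, and concludes by two dominated-convergence arguments (using $|\chi_{iy}|\le 1$ and the local integrability of $\Lambda$). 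You instead argue on the operator side: $\int\Theta(\widetilde{p}\widetilde{g})\Psi(\widetilde{g})\,d\widetilde{g}=\tr\bigl(\omega(\widetilde{p})\omega(\Psi)\bigr)$, with $\omega(\Psi)$ trace class, $\omega(\widetilde{p})$ contractions converging strongly to $\Id$, and a singular-value/dominated-convergence upgrade to trace-norm convergence. Both arguments lean on external inputs of comparable weight: the paper on \cite[Section~4.5]{TOM1} (that $T(\widetilde{p})\to\delta_{0}$ and the Weyl-calculus machinery), you on the facts that $\omega(\widetilde{p})$ is trace class with $\tr\omega(\widetilde{p})=\Theta(\widetilde{p})$, that $\omega$ is multiplicative on $\widetilde{\Sp(W_{\mathbb{C}})^{++}}\cup\widetilde{\Sp(W)}$, and that $\omega(\widetilde{p})\to\Id$ strongly -- all of which come from \cite{HOW3}/\cite{TOM1} and are recorded in the paper only in the remark following the theorem, so there is no circularity, but be aware you are also using that the distribution character of $\omega$ is represented by the function $\Theta$ on $\widetilde{\Sp(W)}$ (stated in Section \ref{Metaplectic}); the paper's computation does not need that fact and instead re-derives the identification of the limit directly in Cayley coordinates. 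Your route buys a cleaner treatment of an arbitrary approach $\widetilde{p}\to 1$ and avoids the chart/partition-of-unity bookkeeping; the paper's route is more explicit and self-contained at the level of Gaussians. Finally, note that the paper offers no proof at all of the holomorphy of $\Theta$ or of \eqref{Equation Theta} (they are taken from the references), so your verification that $c(g_{1})+c(g_{2})=2(g_{1}-1)^{-1}(g_{1}g_{2}-1)(g_{2}-1)^{-1}$, hence $C(g_{1},g_{2})^{2}=\det(i(g_{1}-1))\det(i(g_{2}-1))\det(i(g_{1}g_{2}-1))^{-1}$, is a welcome addition; just keep in mind that for $\widetilde{g}_{i}\in\widetilde{\Sp(W)}$ with $\det(g_{i}-1)=0$ the Cayley expression for $C$ degenerates, so your continuity argument should be phrased as matching the semigroup cocycle with the metaplectic cocycle of Section \ref{Metaplectic} on a dense set and extending, exactly as you indicate.
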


\begin{proof}

We assume that the support of $\Psi$ is contained in the image of $\widetilde{c}(0) \widetilde{c}$. Then,
\begin{eqnarray*}
\displaystyle\int_{\widetilde{\Sp(W)}} \Theta(\widetilde{p}\widetilde{g})\Psi(\widetilde{g}) d\widetilde{g} & = & \displaystyle\int_{\widetilde{\Sp(W)}} \Theta(\widetilde{p}\widetilde{c}(0)\widetilde{g})\Psi(\widetilde{c}(0)\widetilde{g}) d\widetilde{g} = \displaystyle\int_{\supp \Psi} \Theta(\widetilde{p}\widetilde{c}(0)\widetilde{g})\Psi(\widetilde{c}(0)\widetilde{g}) d\widetilde{g} \\
                               & = & \displaystyle\int_{\mathfrak{sp}(W)} \Theta(\widetilde{p}\widetilde{c}(0)\widetilde{c}(x))\Psi(\widetilde{c}(0)\widetilde{c}(x)) j(x) dx = \displaystyle\int_{\mathfrak{sp}(W)} \Theta(\widetilde{c}(iy)\widetilde{c}(x))\Psi(\widetilde{c}(0)\widetilde{c}(x)) j(x) dx
\end{eqnarray*}
where $\widetilde{p}\widetilde{c}(0) = \widetilde{c}(iy)$ with $y \in \mathfrak{sp}(W)$ and $\langle y\cdot, \cdot\rangle > 0$. In particular, $y \to 0$ when $\widetilde{p} \to 1$. But, according to equation \eqref{EquationLambda}
\begin{equation*}
\Theta(\widetilde{c}(iy) \widetilde{c}(x)) = \Theta(\widetilde{c}(iy)) \Theta(\widetilde{c}(x)) \Lambda(x+iy).
\end{equation*}  
We denote by $\psi$ the function of $\mathfrak{sp}(W)$ given by $\psi(x) = \Theta(\widetilde{c}(x)) \Psi(\widetilde{c}(x)) j(x)$ (we notice easily that $\psi \in \mathscr{C}^{\infty}_{c}(\mathfrak{sp}(W))$). We get:
\begin{eqnarray*}
\displaystyle\int_{\mathfrak{sp}(W)} \Lambda(x+iy) \psi(x)dx & = & \displaystyle\int_{\mathfrak{sp}(W)} \displaystyle\int_{W} \chi_{x +iy}(w) \psi(x) dwdx = \displaystyle\int_{W} \displaystyle\int_{\mathfrak{sp}(W)} \chi_{x}(w) \chi_{iy}(w) \psi(x) dxdw \\
                                  & = & \displaystyle\int_{W} \chi_{iy}(w) \displaystyle\int_{\mathfrak{sp}(W)} \chi_{x}(w) \psi(x) dxdw
\end{eqnarray*}
and then
\begin{equation*}
\displaystyle\int_{\mathfrak{sp}(W)} \chi_{x}(w) \psi(x)dx = \displaystyle\int_{\mathfrak{sp}(W)} \psi(x) e^{2i\pi \tau(w)(x)} dx = \widehat{\psi}\left(\frac{1}{4} \tau(w)\right),
\end{equation*}
where $\tau : W \to \mathfrak{sp}(W)^{*}$ is the moment map and $\widehat{\psi}$ is the Fourier transform of $\psi$ on $\mathfrak{sp}(W)$. Then,
\begin{equation*}
\displaystyle\int_{\mathfrak{sp}(W)} \Lambda(x+iy) \psi(x)dx = \displaystyle\int_{W} \chi_{iy}(w) \widehat{\psi}\left(\frac{1}{4} \tau(w)\right) dw.
\end{equation*}
For all $w \in W \setminus \{0\}$, we have
\begin{equation*}
\chi_{iy}(w) = e^{\frac{2i\pi}{4}\langle iy(w), w\rangle} = e^{-\frac{\pi}{2}\langle yw, w\rangle} < 1,
\end{equation*}
because $\langle yw, w\rangle > 0$ for every non zero $w \in W$. Finally, we get:
\begin{equation*}
\lim\limits_{y \to 0} \displaystyle\int_{\mathfrak{sp}(W)} \Lambda(x+iy) \psi(x)dx = \displaystyle\int_{W} \widehat{\psi}\left(\frac{1}{4} \tau(w)\right) dw.
\end{equation*}
Using that $\lim\limits_{y \to 0} \Theta(\widetilde{c}(iy)) = \Theta(\widetilde{c}(0))$, we get:
\begin{equation*}
\lim\limits_{y \to 0} \displaystyle\int_{\mathfrak{sp}(W)} \Theta(\widetilde{c}(iy)) \Lambda(x+iy) \psi(x)dx = \displaystyle\int_{W} \widehat{\psi}\left(\frac{1}{4} \tau(w)\right) dw.
\end{equation*}
Finally, we proved that the limit we considered in Equation \eqref{Limite Caractere} exists. Now, we determine this limit. For every $x \in \mathfrak{sp}(W)$, we denote by $B$ the matrix of the bilinear form $\langle x \cdot, \cdot \rangle$. We remark that the matrix of the form $\langle J \cdot, \cdot\rangle$ is the identity matrix. For all $t > 0$, we have:
\begin{eqnarray*}
\Lambda(x+itJ) & = & \displaystyle\int_{W} \chi_{x+itJ}(w) dw = \displaystyle\int_{W} e^{\frac{i\pi}{2}\langle(x+itJ)w, w\rangle} dw = \displaystyle\int_{W} e^{\frac{i\pi}{2}\langle(x+itJ)w, w\rangle} dw \\
                         & = & \displaystyle\int_{W} e^{\frac{i\pi}{2}w^{t}(B+itI)w} dw = \displaystyle\int_{W} e^{\frac{-\pi}{2}w^{t}(-iB+tI)w} dw = \det^{-\frac{1}{2}}\left(\frac{1}{2}(-iB + tI)\right)
\end{eqnarray*}
We know that the eigenvalues of $x$ are real numbers. So, for all $t >0$, 
\begin{equation*}
|\det(-iB + tI)| > |\det(iB)|
\end{equation*}
i.e.
\begin{equation*}
|\det(-iB + tI)|^{-\frac{1}{2}} < |\det(iB)|^{-\frac{1}{2}}.
\end{equation*}
Then,
\begin{equation*}
|\Lambda(x+itJ)| \leq |\Lambda(x)|.
\end{equation*}
Using that the function $\Lambda$ is locally integrable, we get:
\begin{eqnarray*}
\lim\limits_{t \to 0^{+}} \displaystyle\int_{\mathfrak{sp}(W)}\Lambda(x+itJ) \psi(x) dx = \displaystyle\int_{\mathfrak{sp}(W)}\Lambda(x) \psi(x) dx.
\end{eqnarray*}

\end{proof}

\begin{rema}

We first extend the map $T$ on the semigroup. For every element $g \in \Sp(W_{\mathbb{C}})^{++}$, we have $\det(g-1)$. We define the map $T: \widetilde{\Sp(W_{\mathbb{C}})^{++}} \to \S^{*}(W)$ by
\begin{equation*}
T(\widetilde{g} = (g, \xi)) = \Theta(\widetilde{g}) \chi_{c(g)} \mu_{W}.
\end{equation*}
We denote by $\Cont(\L^{2}(X))$ the semigroup of contractions on the Hilbert space $\L^{2}(X)$. One can prove that:
\begin{equation*}
\omega = \Op \circ \mathscr{K} \circ T: \widetilde{\Sp(W_{\mathbb{C}})^{++}} \to \Cont(\L^{2}(X))
\end{equation*}
is a semigroup homomorphism. Moreover, for all $\widetilde{p} \in \widetilde{\Sp(W_{\mathbb{C}})^{++}}$, the operator $\omega(\widetilde{p})$ is of trace class and
\begin{equation*}
\tr \omega(\widetilde{p}) = \Theta(\widetilde{p}).
\end{equation*}

\end{rema}

\section{A general formula for $\Theta_{\Pi'}$}

\label{IntegralFormula}

\noindent Let us start this section with comments concerning some particular integrals.  As shown in \cite[Section~4.8]{TOM1}, for all $\Psi \in \mathscr{C}^{\infty}_{c}(\widetilde{\Sp(W)})$, 
\begin{equation}
\displaystyle\int_{\widetilde{\Sp(W)}} \Psi(\widetilde{g})T(\widetilde{g})d\widetilde{g}
\label{Schwartz1}
\end{equation}
is in $\S(W)$. Brieffly, for $\Psi \in \mathscr{C}^{\infty}_{c}(\widetilde{\Sp(W)})$ such that $\supp(\Psi) \subseteq \Im(\widetilde{c})$ and $\phi \in \S(W)$, we have:
\begin{eqnarray*}
& &\left(\displaystyle\int_{\widetilde{\Sp(W)}} \Psi(\widetilde{g})T(\widetilde{g})d\widetilde{g}\right)(\phi) = \displaystyle\int_{W} \displaystyle\int_{\mathfrak{sp}(W)} \Psi(\widetilde{c}(X))\Theta(\widetilde{c}(X))\phi(w) j_{\mathfrak{sp}}(X)\chi\left(\frac{1}{4}\langle Xw, w\rangle\right) dX dw\\
& = & \displaystyle\int_{W} \left(\displaystyle\int_{\mathfrak{sp}(W)} \varphi(X) \chi\left(\frac{1}{4}\langle Xw, w\rangle\right) dX\right) \phi(w) dw = \displaystyle\int_{W} \widehat{\varphi} \circ \tau_{\mathfrak{sp}}(w) \phi(w) dw 
\end{eqnarray*}
where $\varphi(X) = \Psi(\widetilde{c}(X)) \Theta(\widetilde{c}(X)) j_{\mathfrak{sp}}(X) \in \mathscr{C}^{\infty}_{c}(\mathfrak{sp}(W))$. Then, $\widehat{\phi} \in \S(\mathfrak{sp}(W))$ and $\lambda(w) = \widehat{\phi} \circ \tau_{\mathfrak{sp}}(w) \in \S(W)$.

\noindent Similarly, for all $\widetilde{p} \in \widetilde{\Sp(W_{\mathbb{C}})^{++}}$ and $\Psi \in \mathscr{C}^{\infty}_{c}(\widetilde{\Sp(W)})$, one can prove that there exists $\lambda_{\widetilde{p}} \in \S(W)$ such that for every $\phi \in \S(W)$, we have:
\begin{equation}
\left(\displaystyle\int_{\widetilde{\Sp(W)}} \Psi(\widetilde{g})T(\widetilde{p}\widetilde{g})d\widetilde{g}\right)(\phi) = \displaystyle\int_{W} \lambda_{\widetilde{p}}(w) \phi(w) dw.
\label{Schwartz1}
\end{equation}
The link between the functions $\lambda_{\widetilde{p}}$ and $\lambda$ is given by the following equality:
\begin{equation}
\lambda_{\widetilde{p}}(w) = T(\widetilde{p}) \natural \lambda(w) \qquad (w \in W).
\label{TwistedDistribution}
\end{equation}   

\begin{lemme}

For all $\widetilde{g} \in \widetilde{\Sp(W)}^{c}$ and $\widetilde{h} \in \widetilde{\Sp(W_{\mathbb{C}})^{++}}$, we get:
\begin{equation*}
C(g, h) \chi_{c(gh)}(w) = \displaystyle\int_{W} \chi_{c(g)}(u) \chi_{c(h)}(w-u) \chi\left(\frac{1}{2}\langle u, w\rangle\right) du \qquad (\forall w \in W).
\end{equation*}

\label{lemma1908}

\end{lemme}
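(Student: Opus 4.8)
The plan is to reduce the claimed identity to a Gaussian integral computation via the explicit formulas for $\chi_{c(g)}$, $\chi_{c(h)}$ and the cocycle $C$ already recorded in Sections~2 and~3. First I would write everything in coordinates adapted to a complete polarisation, using $\chi_{c(g)}(w) = \chi\bigl(\tfrac{1}{8}\langle c(g)w,w\rangle\bigr)$ and the analogous expression for $h$; the right-hand side is then a single Gaussian integral over $W$ in the variable $u$, with a quadratic form built from $c(g)$ (which is real symmetric, since $g \in \Sp(W)^{c}$) plus $c(h)$ (which has positive-definite imaginary part, since $h \in \widetilde{\Sp(W_{\mathbb{C}})^{++}}$ by Lemma~\ref{Semi-groupe-Cayley}), together with the pairing term $\chi\bigl(\tfrac{1}{2}\langle u,w\rangle\bigr)$. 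Because the imaginary part of the total quadratic form is positive definite, this integral converges absolutely, exactly as in the Remark following Lemma~\ref{Semi-groupe-Cayley}, and evaluates to $\det^{-1/2}$ of (a multiple of) the quadratic form times an exponential which is again a Gaussian character in $w$.

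The key algebraic step is to identify the output of that Gaussian integral. Completing the square in $u$, the exponent becomes a quadratic form in $w$ whose matrix one expects, on general grounds from the composition law of the oscillator semigroup, to be $c(gh)$; and the prefactor $\det^{-1/2}$ of the $u$-quadratic form should match $C(g,h) = \det^{-1/2}\bigl(\tfrac{1}{2i}(c(g)+c(h))\bigr)$. Concretely, I would use the standard Cayley-transform identity $c(g_{1}g_{2})$ expressed through $c(g_{1})$ and $c(g_{2})$ — equivalently, the additivity-type relation underlying Equation~\eqref{Equation Theta} — to see that the Schur complement produced by completing the square is precisely $c(gh)$. An alternative, cleaner route: both sides, as functions of $w$, are of the form (constant)$\cdot\chi\bigl(\tfrac{1}{8}\langle A w, w\rangle\bigr)$; one checks equality of the quadratic forms $A$ and of the constants separately, the former by a direct matrix manipulation with $c(g)$, $c(h)$, the latter by taking $w = 0$, which collapses the identity to $C(g,h) = \int_{W}\chi_{c(g)}(u)\chi_{c(h)}(-u)\,du$, itself an instance of the Gaussian formula $\int_{W}\chi_{z}(w)\,dw = \Lambda(z)$ from \eqref{EquationLambda} applied to $z$ with $c(z) = $ (appropriate combination), matched against the definition of $C$.

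The main obstacle I anticipate is bookkeeping with the half-integral powers of determinants and the choice of square-root branches: both $C$ and $\Theta$ are only defined on the double cover $\widetilde{\Sp(W_{\mathbb{C}})^{++}} \cup \widetilde{\Sp(W)}$, so the identity must be read at the level of the $\xi$-coordinates, and one has to verify that the branch of $\det^{-1/2}$ emerging from the Gaussian integral (governed by the signature of the real part, via the $\chi(\tfrac{1}{8}\sgn(\cdot))$ phase recorded in Section~2) agrees with the branch built into the cocycle $C$. I would handle this by first establishing the identity up to sign/phase by the absolute-value formula for $|C|$ from Section~2, then pinning down the phase by analytic continuation: fix $g$ and let $h$ range over $\widetilde{\Sp(W_{\mathbb{C}})^{++}}$, where everything is holomorphic and the Gaussian integral depends holomorphically on $h$ (convergence being uniform on compacta by the positive-definiteness of $\Im c(h)$), so it suffices to check one convenient base point — e.g. $h = \widetilde{c}(itJ)$ for large $t$, where the integral is an honest convergent Gaussian and both sides reduce to an elementary determinant computation as in the proof of Theorem~\ref{Theoreme section 3 important}.
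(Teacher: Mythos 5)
Your proposal is sound, but it follows a genuinely different route from the paper. You prove the identity ``from scratch'' as a Gaussian-integral computation: write both sides as constants times Gaussians $\chi\bigl(\tfrac{1}{8}\langle A w,w\rangle\bigr)$, complete the square in $u$ (convergence being guaranteed by the positive-definite imaginary part of $c(h)$, as in Lemma \ref{Semi-groupe-Cayley} and the remark containing \eqref{EquationLambda}), identify the Schur complement with $c(gh)$ via the Cayley composition identity, and fix the determinant branch by the modulus formula plus holomorphic dependence on $\widetilde{h}$ and a base-point check. The paper does none of this: it simply quotes the fact that $T$ is a homomorphism for twisted convolution, $T(\widetilde{g}\widetilde{h}) = T(\widetilde{g}) \natural T(\widetilde{h})$ (from the Aubert--Przebinda construction), unwinds both sides of $T(\widetilde{g}\widetilde{h})\natural\phi = T(\widetilde{g})\natural(T(\widetilde{h})\natural\phi)$ against a test function, uses $\Theta(\widetilde{g}\widetilde{h}) = C(g,h)\Theta(\widetilde{g})\Theta(\widetilde{h})$ as in \eqref{Equation Theta}, and reads off the kernel identity at $v=0$; all branch and phase bookkeeping is thereby absorbed into previously established properties of $T$, $\Theta$ and $C$, so no Gaussian integral is ever evaluated. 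Your route buys self-containedness (it effectively re-proves the cocycle identity that the paper imports), at the cost of the determinant and phase analysis; the paper's route is shorter but rests on the cited multiplicativity of $T$. One point to tighten if you carry your plan out: the absolute-value formula for $|C(g_{1},g_{2})|$ recalled in Section 2 is stated only for $g_{1}, g_{2} \in \Sp(W)$, so in your mixed situation ($g$ real, $h$ in the semigroup) you must compute the modulus of the Gaussian integral directly (or otherwise argue that the ratio of the two sides is a holomorphic function of $\widetilde{h}$ of constant modulus, hence constant) before the single base-point evaluation at $\widetilde{h} = \widetilde{c}(itJ)$ can pin the phase; as stated, quoting Section 2 verbatim there is a small gap, though easily repaired within your approach.
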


\begin{proof}

We have $T(\widetilde{g}\widetilde{h}) = T(\widetilde{g}) \natural T(\widetilde{h})$, i.e. $T(\widetilde{g}\widetilde{h}) \natural \phi = T(\widetilde{g}) \natural T(\widetilde{h}) \natural \phi$ for all $\phi \in \S(W)$.

\noindent For all $w \in W$, we have:
\begin{eqnarray*}
T(\widetilde{g}\widetilde{h}) \natural \phi(w) & = & \displaystyle\int_{W} \Theta(\widetilde{g}\widetilde{h}) \chi_{c(gh)}(u) \phi(w-u) \chi\left(\frac{1}{2}\langle u, w\rangle\right) du \\
                                     & = & \Theta(\widetilde{g})\Theta(\widetilde{h})C(g, h) \displaystyle\int_{W} \chi_{c(gh)}(u) \phi(w-u) \chi\left(\frac{1}{2}\langle u, w\rangle\right) du \\
                                     & = & -\Theta(\tilde{g}) \Theta(\tilde{h}) C(g, h) \displaystyle\int_{W} \chi_{c(gh)}(w-v) \chi\left(-\frac{1}{2}\langle v, w\rangle\right) dv
\end{eqnarray*}  
and
\begin{eqnarray*}
& & (T(\widetilde{g}) \natural T(\widetilde{h})) \natural \phi(w) = T(\widetilde{g}) \natural (T(\widetilde{h}) \natural \phi)(w) = \displaystyle\int_{W} \Theta(\widetilde{g}) \chi_{c(g)}(u) T(\widetilde{h}) \natural \phi(w-u) \chi\left(\frac{1}{2}\langle u, w\rangle\right) du \\
            & = & \displaystyle\int_{W} \Theta(\widetilde{g}) \chi_{c(g)}(u) \left(\displaystyle\int_{W} \Theta(\widetilde{h}) \chi_{c(h)}(v) \phi(w-u-v) \chi\left(\frac{1}{2}\langle v, w-u\rangle\right)dv\right) \chi\left(\frac{1}{2}<u, w>\right) du \\  
            & = & \displaystyle\int_{W} \Theta(\widetilde{g}) \chi_{c(g)}(u) \left(\displaystyle\int_{W} \Theta(\widetilde{h}) \chi_{c(h)}(w-u-z) \phi(z) \chi\left(-\frac{1}{2}\langle z, w-u\rangle\right) dz \right) \chi\left(\frac{1}{2}\langle u, w\rangle\right) du \\
            & = & \Theta(\widetilde{g}) \Theta(\widetilde{h}) \displaystyle\int_{W} \phi(z) \left(\displaystyle\int_{W} \chi_{c(g)}(u) \chi_{c(h)}(w-u-z) \chi\left(-\frac{1}{2}\langle z, w-u\rangle\right) \chi\left(\frac{1}{2}\langle u, w\rangle\right)du\right)dz \\
            & = & C(g, h) \chi_{c(gh)}(w-v) \chi\left(-\frac{1}{2}\langle v, w\rangle\right)
\end{eqnarray*}
Then, for all $v, w \in W$, we get:
\begin{equation*}
C(g, h) \chi_{c(gh)}(w-v) \chi\left(-\frac{1}{2}\langle v, w\rangle\right) = \displaystyle\int_{W} \chi_{c(g)}(u) \chi_{c(h)}(w-u-v) \chi\left(-\frac{1}{2} \langle v, w-u\rangle\right) \chi\left(\frac{1}{2} \langle u, w\rangle\right)du.
\end{equation*}
We get the result by taking $v = 0$. 
                                  
\end{proof}

\begin{prop}

For every $\widetilde{p} \in \widetilde{\Sp(W_{\mathbb{C}})}^{++}$ and $\Psi, \Phi \in \mathscr{C}^{\infty}_{c}(\widetilde{\Sp(W)})$, we get that:
\begin{equation*}
\displaystyle\int_{\widetilde{\Sp(W)}} \Psi(\widetilde{g}) \displaystyle\int_{\widetilde{\Sp(W)}} \Phi(\widetilde{h}) T(\widetilde{g}\widetilde{h}\widetilde{p}) d\widetilde{h}d\widetilde{g}
\end{equation*}
is a Schwartz function $\phi_{\widetilde{p}}$ given by:
\begin{equation*}
\phi_{\widetilde{p}}(w) = \displaystyle\int_{\widetilde{\Sp(W)}} \Psi(\widetilde{g}) \displaystyle\int_{\widetilde{\Sp(W)}} \Phi(\widetilde{h}) \Theta(\widetilde{g}\widetilde{h}\widetilde{p}) \chi_{c(ghp)}(w) d\widetilde{h}d\widetilde{g}.
\end{equation*}

\label{PropCocycle}

\end{prop}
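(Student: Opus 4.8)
\medskip
\noindent\textbf{Proof plan.} The plan is to observe that, once $\widetilde{p}$ has been inserted, the distribution $T(\widetilde{g}\widetilde{h}\widetilde{p})$ is in fact a Gaussian-type Schwartz function depending smoothly on $(\widetilde{g},\widetilde{h})$, so the integral can be analysed by elementary means.

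First I would check that $\widetilde{g}\widetilde{h}\widetilde{p}\in\widetilde{\Sp(W_{\mathbb{C}})^{++}}$ for all $\widetilde{g},\widetilde{h}\in\widetilde{\Sp(W)}$ and $\widetilde{p}\in\widetilde{\Sp(W_{\mathbb{C}})^{++}}$: this follows from \eqref{Semi-groupe1} (applied twice, and lifted to the covers). In particular $\det(ghp-1)\neq0$, and by the remark following Theorem~\ref{Theoreme section 3 important} one has $T(\widetilde{g}\widetilde{h}\widetilde{p})=\Theta(\widetilde{g}\widetilde{h}\widetilde{p})\,\chi_{c(ghp)}\,\mu_{W}$, so that $\phi_{\widetilde{p}}$ is well defined as a function on $W$. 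For every $\phi\in\S(W)$ the pairing $\langle\chi_{c(ghp)}\mu_{W},\phi\rangle=\int_{W}\chi_{c(ghp)}(w)\phi(w)\,dw$ is absolutely convergent, so, using that $\Psi,\Phi$ are compactly supported and applying Fubini to the (absolutely integrable) integrand $\Psi(\widetilde{g})\Phi(\widetilde{h})\Theta(\widetilde{g}\widetilde{h}\widetilde{p})\chi_{c(ghp)}(w)\phi(w)$, one obtains
\begin{equation*}
\left(\int_{\widetilde{\Sp(W)}}\!\!\Psi(\widetilde{g})\int_{\widetilde{\Sp(W)}}\!\!\Phi(\widetilde{h})\,T(\widetilde{g}\widetilde{h}\widetilde{p})\,d\widetilde{h}\,d\widetilde{g}\right)(\phi)=\int_{W}\phi_{\widetilde{p}}(w)\,\phi(w)\,dw .
\end{equation*}
Thus it only remains to prove that $\phi_{\widetilde{p}}\in\S(W)$.

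For this I would invoke Lemma~\ref{Semi-groupe-Cayley}: since $ghp\in\Sp(W_{\mathbb{C}})^{++}$, writing $c(ghp)=x_{ghp}+iy_{ghp}$ with $x_{ghp},y_{ghp}\in\mathfrak{sp}(W)$, the bilinear form $w\mapsto\langle y_{ghp}w,w\rangle$ is positive definite, so $\chi_{c(ghp)}(w)=e^{\frac{i\pi}{8}\langle x_{ghp}w,w\rangle}\,e^{-\frac{\pi}{8}\langle y_{ghp}w,w\rangle}$. As $(\widetilde{g},\widetilde{h})$ runs over the compact set $\supp\Psi\times\supp\Phi$, the continuous map $(\widetilde{g},\widetilde{h})\mapsto c(ghp)$ stays inside $\Sp(W_{\mathbb{C}})^{++}$ (all of which lies in the Cayley domain), whence $\langle y_{ghp}w,w\rangle\geq\varepsilon\|w\|^{2}$ for some $\varepsilon>0$ independent of $(\widetilde{g},\widetilde{h})$, and $|\Theta(\widetilde{g}\widetilde{h}\widetilde{p})|$ is bounded there. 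Consequently $\Theta(\widetilde{g}\widetilde{h}\widetilde{p})\chi_{c(ghp)}(w)$ and each of its $w$-derivatives are dominated, uniformly in $(\widetilde{g},\widetilde{h})$, by a fixed function $C_{\alpha}(1+\|w\|)^{N_{\alpha}}e^{-\pi\varepsilon\|w\|^{2}/8}$; differentiating under the integral sign then shows that $\phi_{\widetilde{p}}$ is smooth and that it, together with all its derivatives, decays rapidly, i.e.\ $\phi_{\widetilde{p}}\in\S(W)$.

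I expect the only delicate points to be bookkeeping ones: justifying the interchange of the two group integrals with the $w$-integral (handled by the absolute-integrability bound above) and the uniformity of the Gaussian domination over $\supp\Psi\times\supp\Phi$ (handled by compactness and continuity of the Cayley transform on $\Sp(W_{\mathbb{C}})^{++}$). Alternatively, one can route the argument through the single-variable case already available: the substitution $\widetilde{k}=\widetilde{g}\widetilde{h}$ together with left invariance of Haar measure reduces the double integral to $\int\Xi(\widetilde{k})T(\widetilde{k}\widetilde{p})\,d\widetilde{k}$ with $\Xi=\Psi\ast\Phi\in\mathscr{C}^{\infty}_{c}(\widetilde{\Sp(W)})$, and then $T(\widetilde{k}\widetilde{p})=T(\widetilde{k})\natural T(\widetilde{p})$ (Lemma~\ref{lemma1908} together with \eqref{Equation Theta}) exhibits this as $\bigl(\int\Xi(\widetilde{k})T(\widetilde{k})\,d\widetilde{k}\bigr)\natural T(\widetilde{p})=\lambda_{\Xi}\natural T(\widetilde{p})$, a twisted convolution of the Schwartz function $\lambda_{\Xi}$ (by the fact recalled at the start of this section, cf.\ \cite[Section~4.8]{TOM1}) with the Schwartz function $T(\widetilde{p})$, hence again Schwartz.
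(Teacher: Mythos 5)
Your proof is correct, but it takes a genuinely different route from the paper's. The paper establishes the identity by unwinding the weak integral through twisted convolutions: it writes $T(\widetilde g\widetilde h\widetilde p)\natural\phi(0)$, factors it as $T(\widetilde g)\natural\bigl(T(\widetilde h\widetilde p)\natural\phi\bigr)(0)$, expands everything into explicit integrals, and reassembles the kernel using the cocycle identity of Lemma \ref{lemma1908} together with \eqref{Equation Theta}; the Schwartz property of $\phi_{\widetilde p}$ is left implicit, resting on the discussion at the start of Section \ref{IntegralFormula} (i.e.\ on \cite[Section~4.8]{TOM1}). You instead exploit from the outset that $ghp\in\Sp(W_{\mathbb{C}})^{++}$ (by \eqref{Semi-groupe1} applied twice), so that $T(\widetilde g\widetilde h\widetilde p)=\Theta(\widetilde g\widetilde h\widetilde p)\,\chi_{c(ghp)}\,\mu_{W}$ is already an honest Gaussian density; the asserted formula is then just Fubini, and $\phi_{\widetilde p}\in\S(W)$ follows from the uniform Gaussian domination over the compact set $\supp\Psi\times\supp\Phi$ (Lemma \ref{Semi-groupe-Cayley} plus continuity of $(\widetilde g,\widetilde h)\mapsto c(ghp)$). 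Your argument is more elementary — it needs neither Lemma \ref{lemma1908} nor the $\natural$-calculus — and it has the merit of actually proving the Schwartz claim, which the paper's computation does not address; what the paper's route buys is the explicit connection with the twisted-convolution structure ($T(\widetilde g\widetilde h\widetilde p)=T(\widetilde g)\natural T(\widetilde h\widetilde p)$, the cocycle $C(g,hp)$) that feeds directly into Theorem \ref{TheoCharacter} via \eqref{TwistedDistribution}. Two small remarks: with the paper's normalization $\chi_{c(g)}(u)=\chi\left(\frac18\langle c(g)u,u\rangle\right)$ the exponent should read $e^{\frac{i\pi}{4}\langle c(ghp)w,w\rangle}$ rather than your $\frac{i\pi}{8}$ (immaterial for the estimates); and in your alternative route the identity $T(\widetilde k\widetilde p)=T(\widetilde k)\natural T(\widetilde p)$ for arbitrary $\widetilde k\in\widetilde{\Sp(W)}$ is not literally covered by Lemma \ref{lemma1908}, which is stated for $\widetilde k\in\widetilde{\Sp(W)^{c}}$, so there you should cite \cite{TOM1} (as the paper implicitly does) or extend off the Cayley domain by continuity.
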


\begin{proof}

For all $\phi \in \S(W)$, we have:
\begin{eqnarray*}
& & \left(\displaystyle\int_{\widetilde{\Sp(W)}} \Psi(\widetilde{g}) \displaystyle\int_{\widetilde{\Sp(W)}} \Phi(\widetilde{h}) T(\widetilde{g}\widetilde{h}\widetilde{p}) d\widetilde{h}d\widetilde{g}\right)(\phi) = \displaystyle\int_{\widetilde{\Sp(W)}} \Psi(\widetilde{g}) \displaystyle\int_{\widetilde{\Sp(W)}} \Phi(\widetilde{h}) T(\widetilde{g}\widetilde{h}\widetilde{p}) \natural \phi(0) d\widetilde{h}d\widetilde{g} \\
& = & \displaystyle\int_{\widetilde{\Sp(W)}} \Psi(\widetilde{g}) T(\widetilde{g}) \natural \left(\displaystyle\int_{\widetilde{\Sp(W)}} \Phi(\widetilde{h}) T(\widetilde{h}\widetilde{p}) \natural \phi d\widetilde{h}\right)(0) d\widetilde{g} \\
& = & \displaystyle\int_{\widetilde{\Sp(W)}} \Psi(\widetilde{g}) \displaystyle\int_{W} \Theta(\widetilde{g}) \chi_{c(g)}(w) \left(\displaystyle\int_{\widetilde{\Sp(W)}} \Phi(\widetilde{h})T(\widetilde{h}\widetilde{p}) \natural \phi d\widetilde{h}\right)(-w) dw d\widetilde{g} \\
& = & \displaystyle\int_{\widetilde{\Sp(W)}} \Psi(\widetilde{g}) \displaystyle\int_{W} \Theta(\widetilde{g}) \chi_{c(g)}(w) \displaystyle\int_{\widetilde{\Sp(W)}} \Phi(\widetilde{h})\left(\displaystyle\int_{W} \Theta(\widetilde{h}\widetilde{p}) \chi_{c(hp)}(u) \phi(-w-u) \chi\left(-\frac{1}{2}\langle u, w\rangle\right) du\right) d\widetilde{h} dw d\widetilde{g} \\
& = & \displaystyle\int_{W} \phi(v) \left(\displaystyle\int_{\widetilde{\Sp(W)}} \displaystyle\int_{\widetilde{\Sp(W)}} \displaystyle\int_{W} \Psi(\widetilde{g}) \Phi(\widetilde{h}) \Theta(\widetilde{g})\Theta(\widetilde{h} \widetilde{p}) \chi_{c(g)}(u-v) \chi_{c(hp)}(u) \chi\left(\frac{1}{2}\langle u, w\rangle\right)du d\widetilde{h}d\widetilde{g}\right) dv \\
& = & \displaystyle\int_{W} \phi(v) \left(\displaystyle\int_{\widetilde{\Sp(W)}} \displaystyle\int_{\widetilde{\Sp(W)}} \Psi(\widetilde{g}) \Phi(\widetilde{h}) \Theta(\widetilde{g})\Theta(\widetilde{h} \widetilde{p}) C(g, hp) \chi_{chp}(v)d\widetilde{h}d\widetilde{g}\right) dv \\
& = & \displaystyle\int_{W} \phi(v) \left(\displaystyle\int_{\widetilde{\Sp(W)}} \displaystyle\int_{\widetilde{\Sp(W)}} \Psi(\widetilde{g}) \Phi(\widetilde{h}) \Theta(\widetilde{g}\widetilde{h}\widetilde{p}) \chi_{c(ghp)}(v)d\widetilde{g}d\widetilde{h}\right) dv
\end{eqnarray*}
(where the last equality is obtained using Lemma \eqref{lemma1908}).

\end{proof}

\noindent Now, we are able to prove state and prove the following theorem.

\begin{theo}

For every function $\Psi \in \mathscr{C}^{\infty}_{c}(\widetilde{\G}')$, we get:
\begin{equation*}
\Theta_{\Pi'}(\Psi) = \lim\limits_{\underset{\widetilde{p} \in \widetilde{\Sp(W_{\mathbb{C}})}^{++}}{\widetilde{p} \to 1}} \displaystyle\int_{\widetilde{\G}'} \displaystyle\int_{\widetilde{\G}} \overline{\Theta_{\Pi}(\widetilde{g})} \Theta(\widetilde{g} \widetilde{g}'\widetilde{p}) \Psi(\widetilde{g}') d\tilde{g} d\tilde{g'}.
\end{equation*}
Then, as a distributions on $\widetilde{\G}'$, we have:
\begin{equation}
\Theta_{\Pi'}(\widetilde{g}') = \lim\limits_{\underset{\widetilde{p} \in \widetilde{\Sp(W_{\mathbb{C}})}^{++}}{\widetilde{p} \to 1}} \displaystyle\int_{\widetilde{\G}} \overline{\Theta_{\Pi}(\widetilde{g})} \Theta(\widetilde{g} \widetilde{g}' \widetilde{p}) d\tilde{g}.
\label{TheoLimiteSemigroup}
\end{equation}

\label{TheoCharacter}
\end{theo}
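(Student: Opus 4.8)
The plan is to start from the formula obtained in the earlier Proposition, namely
\begin{equation*}
\Theta_{\Pi'}(\Psi) = \tr \displaystyle\int_{\widetilde{\G}'}\left(\displaystyle\int_{\widetilde{\G}} \overline{\Theta_{\Pi}(\widetilde{g})}\omega^{\infty}(\widetilde{g}\widetilde{g}') d\widetilde{g}\right) \Psi(\widetilde{g}')d\widetilde{g}',
\end{equation*}
and to regularise the non-continuous operator-valued integral by inserting an element $\widetilde{p} \in \widetilde{\Sp(W_{\mathbb{C}})}^{++}$ of the oscillator semigroup. Since $\widetilde{\G}$ is compact and $\widetilde{\G}'$ acts by trace-class operators after such an insertion (the remark at the end of Section \ref{Oscillator} gives $\tr\omega(\widetilde{p}) = \Theta(\widetilde{p})$ for $\widetilde{p}$ in the semigroup, and $\Sp(W).\widetilde{\Sp(W_{\mathbb{C}})}^{++} \subseteq \widetilde{\Sp(W_{\mathbb{C}})}^{++}$ by Proposition on the semigroup), the integral $\int_{\widetilde{\G}}\int_{\widetilde{\G}'}\overline{\Theta_{\Pi}(\widetilde{g})}\Psi(\widetilde{g}')\,\omega(\widetilde{g}\widetilde{g}'\widetilde{p})\,d\widetilde{g}'d\widetilde{g}$ is a well-defined trace-class operator whose trace is computed by integrating the scalar $\Theta(\widetilde{g}\widetilde{g}'\widetilde{p})$. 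First I would show that pushing $\widetilde{p} \to 1$ recovers the original expression, i.e.
\begin{equation*}
\Theta_{\Pi'}(\Psi) = \lim\limits_{\underset{\widetilde{p} \in \widetilde{\Sp(W_{\mathbb{C}})}^{++}}{\widetilde{p} \to 1}} \tr \displaystyle\int_{\widetilde{\G}'}\left(\displaystyle\int_{\widetilde{\G}} \overline{\Theta_{\Pi}(\widetilde{g})}\omega(\widetilde{g}\widetilde{g}'\widetilde{p}) d\widetilde{g}\right) \Psi(\widetilde{g}')d\widetilde{g}'.
\end{equation*}

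The key technical step is to justify moving the trace inside the integrals and evaluating it as the scalar character $\Theta$. For this I would appeal to Proposition \ref{PropCocycle}: with $\Phi$ chosen (in a limiting sense) to implement the averaging against $\overline{\Theta_{\Pi}}$ over the compact group $\widetilde{\G}$ and $\Psi$ supported on $\widetilde{\G}'$, that proposition says the operator $\int\int \Psi(\widetilde{g})\Phi(\widetilde{h})\,T(\widetilde{g}\widetilde{h}\widetilde{p})\,d\widetilde{h}d\widetilde{g}$ is given by a genuine Schwartz kernel $\phi_{\widetilde{p}}$, and since $\Op\circ\mathscr{K}$ of a Schwartz function is trace class with trace equal to the value of a natural integral pairing, the trace is exactly $\int\int \Psi(\widetilde{g})\Phi(\widetilde{h})\,\Theta(\widetilde{g}\widetilde{h}\widetilde{p})\,d\widetilde{h}d\widetilde{g}$ evaluated via $\chi_{c(ghp)}$. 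Concretely, one uses that for a trace-class operator $\Op\circ\mathscr{K}(f)$ with $f \in \S(W)$ the trace equals $f$ paired against the delta at the origin appropriately normalised, which for $f = \Theta(\widetilde{g}) \chi_{c(g)}\mu_{(g-1)W}$-type kernels reproduces the scalar $\Theta$. Thus for each fixed $\widetilde{p}$ in the semigroup,
\begin{equation*}
\tr \displaystyle\int_{\widetilde{\G}'}\displaystyle\int_{\widetilde{\G}} \overline{\Theta_{\Pi}(\widetilde{g})}\,\omega(\widetilde{g}\widetilde{g}'\widetilde{p})\,\Psi(\widetilde{g}')\, d\widetilde{g}\,d\widetilde{g}' = \displaystyle\int_{\widetilde{\G}'}\displaystyle\int_{\widetilde{\G}} \overline{\Theta_{\Pi}(\widetilde{g})}\,\Theta(\widetilde{g}\widetilde{g}'\widetilde{p})\,\Psi(\widetilde{g}')\,d\widetilde{g}\,d\widetilde{g}'.
\end{equation*}

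Once the $\Psi$-level identity with the limit is established, the passage to the stated distributional identity for $\Theta_{\Pi'}(\widetilde{g}')$ is a matter of unwinding the definition: since the right-hand side is (for each $\widetilde{p}$) given by integration against the locally integrable function $\widetilde{g}' \mapsto \int_{\widetilde{\G}} \overline{\Theta_{\Pi}(\widetilde{g})}\Theta(\widetilde{g}\widetilde{g}'\widetilde{p})\,d\widetilde{g}$ — which is smooth on the set where $\widetilde{g}'\widetilde{p}$ lies in the Cayley domain, hence in particular on $\widetilde{\G}'^{\reg}$ for $\widetilde{p}$ near $1$ — the limit is taken in $\mathscr{D}'(\widetilde{\G}')$ and gives the displayed formula \eqref{TheoLimiteSemigroup}. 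I expect the main obstacle to be the careful verification that the trace can legitimately be exchanged with the (operator-valued, only conditionally convergent before regularisation) integrals and that the resulting scalar is precisely $\Theta(\widetilde{g}\widetilde{g}'\widetilde{p})$ rather than some twisted variant — this is exactly what the cocycle bookkeeping in Lemma \ref{lemma1908} and Proposition \ref{PropCocycle} is designed to control, so the proof should reduce to invoking them with the appropriate choice of test functions, together with the limit statement \eqref{Limite Caractere} of Theorem \ref{Theoreme section 3 important} to handle the passage $\widetilde{p}\to 1$.
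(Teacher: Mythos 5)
Your proposal follows essentially the same route as the paper: regularise by an element $\widetilde{p}$ of the oscillator semigroup, use the cocycle bookkeeping of Lemma \ref{lemma1908} and Proposition \ref{PropCocycle} to rewrite the regularised operator as $\Op\circ\mathscr{K}$ of an explicit Schwartz function, compute its trace as the value of that function at $0$ (the paper cites Howe's Theorem 3.5.4 in \cite{HOW4} for exactly this), and then let $\widetilde{p}\to 1$; the distributional statement is indeed just a reformulation at the level of test functions $\Psi$. The one place where your argument, as written, would not go through is the justification of the passage to the limit: Equation \eqref{Limite Caractere} of Theorem \ref{Theoreme section 3 important} is a statement about integration of $\Theta(\widetilde{p}\,\cdot\,)$ against a compactly supported smooth density on all of $\widetilde{\Sp(W)}$, and its proof uses Cayley coordinates and Lebesgue measure on the whole of $\mathfrak{sp}(W)$; the measure $\overline{\Theta_{\Pi}(\widetilde{g})}\,\Psi(\widetilde{g}')\,d\widetilde{g}\,d\widetilde{g}'$ lives on the lower-dimensional subset $\widetilde{\G}\cdot\widetilde{\G}'$, so \eqref{Limite Caractere} does not apply to it directly. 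The paper closes this gap differently: writing $\lambda_{\widetilde{p}} = T(\widetilde{p})\natural\lambda$ as in \eqref{TwistedDistribution}, with $\lambda,\lambda_{\widetilde{p}}\in \S(W)$ furnished by Proposition \ref{PropCocycle}, it uses $T(\widetilde{p})\to T(\widetilde{1})=\delta_{0}$ (and the continuity results of \cite[Section~4.5]{TOM1}) to get $\lambda_{\widetilde{p}}(0)\to\lambda(0)$, which is precisely the needed limit since the trace equals $\lambda(0)$ exactly, with no regularisation required on that side. Alternatively, your operator-theoretic framing can be salvaged by noting that $\mathscr{P}_{\Pi}\omega(\Psi)$ is trace class and the $\omega(\widetilde{p})$ are contractions converging strongly to the identity, so $\tr\bigl(\mathscr{P}_{\Pi}\omega(\Psi)\omega(\widetilde{p})\bigr)\to\tr\bigl(\mathscr{P}_{\Pi}\omega(\Psi)\bigr)$; either repair is needed, but invoking \eqref{Limite Caractere} as stated is not sufficient.
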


\begin{proof}

According to Proposition \ref{PropCocycle}, there exists a function $\lambda_{\widetilde{p}} \in \S(W)$ such that
\begin{equation*}
\left(\displaystyle\int_{\widetilde{\G}'} \displaystyle\int_{\widetilde{\G}} \overline{\Theta_{\Pi}(\widetilde{g})} T(\widetilde{g} \widetilde{g}'\widetilde{p}) \Psi(\widetilde{g}') d\tilde{g} d\tilde{g'}\right)(\phi) = \displaystyle\int_{W} \lambda_{\widetilde{p}}(w) \phi(w) dw \qquad (\phi \in \S(W)).
\end{equation*}
Similarly, there exists $\lambda \in \S(W)$ such that
\begin{equation*}
\left(\displaystyle\int_{\widetilde{\G}'} \displaystyle\int_{\widetilde{\G}} \overline{\Theta_{\Pi}(\widetilde{g})} T(\widetilde{g} \widetilde{g}') \Psi(\widetilde{g}') d\tilde{g} d\tilde{g'}\right)(\phi) = \displaystyle\int_{W} \lambda(w) \phi(w) dw \qquad (\phi \in \S(W)).
\end{equation*}
Using Equation \eqref{TwistedDistribution}, for all $w \in W$, we have $\lambda_{\widetilde{p}}(w) = T(\widetilde{p}) \natural \lambda(w)$.

\noindent We have that $\delta_{0} = T(\widetilde{1}) = \lim\limits_{\underset{\widetilde{p} \in \widetilde{\Sp(W_{\mathbb{C}})}^{++}}{\widetilde{p} \to 1}} T(\widetilde{p})$, and then, using \cite[Section~4.5]{TOM1},
\begin{equation*}
\lim\limits_{\underset{\widetilde{p} \in \widetilde{\Sp(W_{\mathbb{C}})}^{++}}{\widetilde{p} \to 1}} \lambda_{\widetilde{p}}(0) = \lim\limits_{\underset{\widetilde{p} \in \widetilde{\Sp(W_{\mathbb{C}})}^{++}}{\widetilde{p} \to 1}} T(\widetilde{p}) \natural \lambda(0) = \delta_{0} \natural \lambda(0) = \lambda(0).
\end{equation*}
Then, using \cite[Theorem~3.5.4]{HOW4}, we get:
\begin{eqnarray*}
\Theta_{\Pi'}(\Psi) & = & \tr \displaystyle\int_{\widetilde{\G}'} \displaystyle\int_{\widetilde{\G}} \overline{\Theta_{\Pi}(\widetilde{g})} \Psi(\widetilde{g}') \omega(\widetilde{g} \widetilde{g}')d\tilde{g} d\tilde{g}' \\
                              & = &\tr \Op \circ \mathscr{K} \displaystyle\int_{\widetilde{\G}'} \displaystyle\int_{\widetilde{\G}} \overline{\Theta_{\Pi}(\widetilde{g})} \Psi(\widetilde{g}') T(\widetilde{g} \widetilde{g}')d\tilde{g} d\tilde{g'} \\
                             & = & \left( \displaystyle\int_{\widetilde{\G}'} \displaystyle\int_{\widetilde{\G}} \overline{\Theta_{\Pi}(\widetilde{g})} \Psi(\widetilde{g}') T(\widetilde{g} \widetilde{g}')d\tilde{g} d\tilde{g'} \right)(0) = \lambda(0) \\
                             & = & \lim\limits_{\underset{\widetilde{p} \in \widetilde{\Sp(W_{\mathbb{C}})}^{++}}{\widetilde{p} \to 1}} \lambda_{\widetilde{p}}(0) = \lim\limits_{\underset{\widetilde{p} \in \widetilde{\Sp(W_{\mathbb{C}})}^{++}}{\widetilde{p} \to 1}} \displaystyle\int_{\widetilde{\G}'} \displaystyle\int_{\widetilde{\G}} \overline{\Theta_{\Pi}(\widetilde{g})} \Theta(\widetilde{g} \widetilde{g}'\widetilde{p}) \Psi(\widetilde{g}') d\tilde{g} d\tilde{g}'.
\end{eqnarray*}                             
                  
\end{proof}

\noindent From now on, we assume that $\G$ is connected. For every $\widetilde{p} \in \widetilde{\Sp(W_{\mathbb{C}})}^{++}$ and $\widetilde{g}' \in \widetilde{\G}'$, we define the function $\F_{\widetilde{p}, \widetilde{g}'}: \widetilde{\G} \to \mathbb{C}$ by:
\begin{equation*}
\F_{\widetilde{p}, \widetilde{g}'}(\widetilde{g}) = \overline{\Theta_{\Pi}(\widetilde{g})} \Theta(\widetilde{g} \widetilde{g}' \widetilde{p}).
\end{equation*}
We easily prove that for every element $g \in \G$, we have:
\begin{equation*}
\F_{\widetilde{p}, \widetilde{g}'}((g, \xi)) = \F_{\widetilde{p}, \widetilde{g}'}((g, -\xi))
\end{equation*}
and by a standard result of differential geometry (see \cite[Lemma~A.4.2.11]{WAL2}), we get
\begin{equation*}
\displaystyle\int_{\widetilde{\G}} \F_{\widetilde{p}, \widetilde{g}'}(\widetilde{g}) d\tilde{g} = 2 \displaystyle\int_{\G} \H_{\widetilde{p}, \widetilde{g}'}(g) dg,                                   
\end{equation*}
where $dg$ is the normalized Haar measure on $\G$ and $\H_{\widetilde{p}, \widetilde{g}'}: \G \to \mathbb{C}$ is the function defined by:
\begin{equation*}
\H_{\widetilde{p}, \widetilde{g}'}(\pr(\widetilde{g})) = \F_{\widetilde{p}, \widetilde{g}'}(\widetilde{g}) \qquad (\widetilde{g} \in \widetilde{\G}).
\end{equation*}
From now on, we assume that $\G$ is connected. By Weyl's integration formula (see \cite[Theorem~8.60]{KNA}), we get:
\begin{equation*}
\displaystyle\int_{\G} \H_{\widetilde{p}, \widetilde{g}'}(g) dg = \displaystyle\int_{\T} \left(\displaystyle\int_{\G/\T} \H_{\widetilde{p}, \widetilde{g}'}(gtg^{-1}) dg\T \right) |D(t)|^{2} dt
\end{equation*}
where $D$ is the Weyl denominator. We define $\G'^{++} = \G'_{\mathbb{C}} \cap \Sp(W_{\mathbb{C}})^{++}$ and denote by $\widetilde{\G'}^{++}$ the preimage in $\widetilde{\Sp(W_{\mathbb{C}})}^{++}$. For every element $\widetilde{p} \in \widetilde{\G}'^{++}$, we prove easily that the function $\H_{\widetilde{p}, \widetilde{g}'}$ in invariant by conjugation. In particular, we get:
\begin{equation*}
\displaystyle\int_{\T} \left(\displaystyle\int_{\G/\T} \H_{\widetilde{p}, \widetilde{g}'}(gtg^{-1}) dg\T \right) |D(t)|^{2} dt = \displaystyle\int_{\T} \H_{\widetilde{p}, \widetilde{g}'}(t) |D(t)|^{2} dt
\end{equation*}
Using Theorem \ref{TheoCharacter}, we get:

\begin{lemme}

For every regular element $\widetilde{g}' \in \widetilde{\G}'$, the character $\Theta_{\Pi'}$ of $\Pi'$ is given by the following formula:
\begin{equation}
\Theta_{\Pi'}(\widetilde{g}') = \lim\limits_{\underset{\widetilde{p} \in \widetilde{\G}'^{++}}{\widetilde{p} \to 1}} \displaystyle\int_{\T} \H_{\widetilde{p}, \widetilde{g}'}(t) |\Delta(t)|^{2} dt.
\label{FormuleIntegrale1908}
\end{equation}

\end{lemme}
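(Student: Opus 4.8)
The plan is to read off the formula from the distributional identity \eqref{TheoLimiteSemigroup} of Theorem~\ref{TheoCharacter} by pushing the integral over $\widetilde\G$ down to $\G$, applying Weyl's integration formula, and then replacing the limiting net $\widetilde p\to 1$ in $\widetilde{\Sp(W_{\mathbb{C}})}^{++}$ by the more economical net $\widetilde p\to 1$ in $\widetilde{\G}'^{++}$. Fix a regular $\widetilde g'\in\widetilde\G'$ and $\widetilde p\in\widetilde{\Sp(W_{\mathbb{C}})}^{++}$, and recall the integrand $\F_{\widetilde p,\widetilde g'}(\widetilde g)=\overline{\Theta_{\Pi}(\widetilde g)}\,\Theta(\widetilde g\widetilde g'\widetilde p)$. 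Since $\Pi$ occurs in $\omega$, both $\Theta_{\Pi}$ and $\Theta$ are genuine on their respective double covers, so the two sign changes cancel and $\F_{\widetilde p,\widetilde g'}((g,-\xi))=\F_{\widetilde p,\widetilde g'}((g,\xi))$; thus $\F_{\widetilde p,\widetilde g'}$ is the pull-back of a function $\H_{\widetilde p,\widetilde g'}$ on $\G$, and the covering change of variables \cite[Lemma~A.4.2.11]{WAL2} rewrites $\int_{\widetilde\G}\F_{\widetilde p,\widetilde g'}\,d\widetilde g$ as a fixed constant times $\int_{\G}\H_{\widetilde p,\widetilde g'}\,dg$, exactly as recorded before the statement.

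The heart of the argument is that for $\widetilde p\in\widetilde{\G}'^{++}$ the function $\H_{\widetilde p,\widetilde g'}$ is a class function on $\G$. Fix $h\in\G$ and a lift $\widetilde h\in\widetilde\G$. The character $\Theta_{\Pi}$ is central on $\widetilde\G$, and, since $\omega(\widetilde p)$ and hence $\omega(\widetilde h\widetilde g\widetilde h^{-1}\widetilde g'\widetilde p)$ is of trace class, cyclicity of $\Theta=\tr\circ\,\omega$ on the semigroup gives $\Theta(\widetilde h\widetilde g\widetilde h^{-1}\widetilde g'\widetilde p)=\Theta(\widetilde g\,\widetilde h^{-1}\widetilde g'\widetilde p\,\widetilde h)$. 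Now $h$ commutes with $g'$ in $\Sp(W)$ and with $p$ in $\Sp(W_{\mathbb{C}})$ — the latter because $\G$ centralizes $\G'_{\mathbb{C}}$, obtained by differentiating $\G\G'=\G'\G$ and exponentiating — and since $\G$ is connected and the covering fibres are discrete, these commutations lift without a sign: $\widetilde h\widetilde g'=\widetilde g'\widetilde h$ and $\widetilde h\widetilde p=\widetilde p\widetilde h$. Hence $\widetilde g\,\widetilde h^{-1}\widetilde g'\widetilde p\,\widetilde h=\widetilde g\widetilde g'\widetilde p$, so $\F_{\widetilde p,\widetilde g'}$ is conjugation-invariant and $\H_{\widetilde p,\widetilde g'}(gtg^{-1})=\H_{\widetilde p,\widetilde g'}(t)$. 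Consequently the inner integral in Weyl's integration formula collapses, $\int_{\G/\T}\H_{\widetilde p,\widetilde g'}(gtg^{-1})\,dg\T=\H_{\widetilde p,\widetilde g'}(t)$, and therefore $\int_{\widetilde\G}\F_{\widetilde p,\widetilde g'}\,d\widetilde g=\int_{\T}\H_{\widetilde p,\widetilde g'}(t)\,|\Delta(t)|^{2}\,dt$, where $\Delta$ is the Weyl denominator $D$ and the covering degree and Weyl's constant are absorbed into the normalizations.

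To conclude I would invoke Theorem~\ref{TheoCharacter}: $\Theta_{\Pi'}(\widetilde g')=\lim_{\widetilde p\to 1,\ \widetilde p\in\widetilde{\Sp(W_{\mathbb{C}})}^{++}}\int_{\widetilde\G}\F_{\widetilde p,\widetilde g'}\,d\widetilde g$. Since this limit exists and $\widetilde{\G}'^{++}$ accumulates at $1$ — by the same argument that puts $\Sp(W)$ in $\overline{\Sp(W_{\mathbb{C}})^{++}}$, one has $1=-c(0)=\lim_{t\to 0^{+}}-c(itJ_{0})$ with $-c(itJ_{0})\in\G'^{++}$, for any $J_{0}\in\mathfrak g'$ such that $\langle J_{0}\cdot,\cdot\rangle$ is positive definite (such a $J_0$ exists for $\G'=\U(p,q)$, e.g. $\Id_{\mathbb{C}^{n}}\otimes\diag(i,\dots,i,-i,\dots,-i)$) — it may be evaluated along the sub-net $\widetilde p\in\widetilde{\G}'^{++}$, and the previous paragraph then yields \eqref{FormuleIntegrale1908}. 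For the pointwise (rather than merely distributional) reading of the conclusion one also uses that for fixed regular $\widetilde g'$ the numbers $\int_{\widetilde\G}\F_{\widetilde p,\widetilde g'}\,d\widetilde g$ depend continuously on $\widetilde p$ in the semigroup and that $\Theta_{\Pi'}$ is represented on $\widetilde\G'^{\reg}$ by Harish-Chandra's locally integrable analytic function, so that the distributional limit localizes at $\widetilde g'$.

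I expect the class-function property of $\H_{\widetilde p,\widetilde g'}$ for $\widetilde p\in\widetilde{\G}'^{++}$ to be the only genuine obstacle: it requires extending the dual-pair commutation from $\G'$ to its complexification $\G'_{\mathbb{C}}$ and to the oscillator semigroup, together with the connectedness argument that removes sign ambiguities in the metaplectic cover. Everything else — the covering-degree bookkeeping, Weyl's normalization, and checking that $1\in\overline{\widetilde{\G}'^{++}}$ — is routine.
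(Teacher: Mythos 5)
Your argument is correct and coincides with the paper's own proof: the paper likewise descends $\overline{\Theta_{\Pi}(\widetilde g)}\,\Theta(\widetilde g\widetilde g'\widetilde p)$ from $\widetilde{\G}$ to $\G$, applies Weyl's integration formula, uses the conjugation-invariance of $\H_{\widetilde p,\widetilde g'}$ for $\widetilde p\in\widetilde{\G}'^{++}$ to collapse the inner integral, and then specializes the limit of Theorem \ref{TheoCharacter} to the subfamily $\widetilde{\G}'^{++}$. Your explicit verifications (trace-class cyclicity and the sign-free lifting of the commutation relations for the class-function property, and $1\in\overline{\widetilde{\G}'^{++}}$) merely fill in steps the paper declares easy, so the route is essentially identical.
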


\noindent Using an article of M. Kashiwara and M. Vergne \cite{VER}, we obtain the weights of the representations $\Pi \in \widehat{\widetilde{\G}}_{\omega}$. Then, using the Weyl character formula (Equation \eqref{WeylCharacter}), we get a formula for the character $\Theta_{\Pi}$. What we need now, is an explicit realisation of the character $\Theta$ of the metaplectic representation.

\section{A restriction of $\Theta$ to a maximal compact subgroup}

\noindent We recall here the main ideas of \cite[Section~2]{TOM8}. Here we want an explicit formula for the character $\Theta$ of the metaplectic representation on a maximal compact subgroup of $\Sp(W)$. We know that for any positive complex structure $J$ on $W$, the subgroup $\Sp(W)^{J}$ of symplectic matrices which commute with $J$ is a maximal compact subgroup of $\Sp(W)$. More precisely, for every compact dual pair $(\G, \G')$ (with $\G$ compact), there exists a complex structure $J$ of $\Sp(W)$ such that $\G.\T' \subseteq \Sp(W)^{J}$, where $\T'$ is the maximal compact Cartan subgroup of $\G'$ (we will construct this element $J$ explicitly for the dual pair $(\U(n, \mathbb{C}), \U(p, q, \mathbb{C}))$ in Section \ref{ComputationsUnitary}).

\noindent We fix a positive complex structure $J$ on $W$, and we denote by $W_{\mathbb{C}}$ the complexification of $W$. With respect to the endomorphism $J$, we get a decomposition of $W_{\mathbb{C}}$ of the form
\begin{equation*}
W_{\mathbb{C}} = W^{+}_{\mathbb{C}} \oplus W^{-}_{\mathbb{C}}
\end{equation*}
where $W^{+}_{\mathbb{C}}$ (resp. $W^{-}_{\mathbb{C}}$) is the $i$-eigenspace (resp. $-i$-eigenspace) for $J$. One can prove easily that the restriction of the form $\H$ defined in Equation \eqref{HermitianForm} to the space $W^{+}_{\mathbb{C}}$ is positive definite. We denote by $\U = \U(W^{+}_{\mathbb{C}}, \H_{|_{W_{\mathbb{C}}}})$ the subgroup of $\GL(W^{+}_{\mathbb{C}})$ which preserve the form $\H_{|_{W_{\mathbb{C}}}}$.

\noindent We define a two fold cover of $\U$, denoted by $\widetilde{\U}$, as
\begin{equation}
\widetilde{\U} = \left\{(u, \xi), \xi^{2} = \det(u), u \in \U\right\} \subseteq \GL(W^{+}_{\mathbb{C}}) \times \mathbb{C}^{*}.
\label{UnitaryCover}
\end{equation}
Then, $\widetilde{\U}$ is a group (endowed with the pointwise multiplication). More precisely, it's a connected two-fold covering of $\U$.

\begin{prop}

The map:
\begin{equation*}
\Sp(W)^{J} \ni g \to g_{|_{W^{+}_{\mathbb{C}}}} \in \U
\end{equation*}
is a group isomorphism and lifts to an isomorphism 
\begin{equation*}
\widetilde{\Sp(W)}^{J} \ni (g, \xi) \to (u, \xi\det(g-1)_{|_{(g-1)W^{+}_{\mathbb{C}}}}) \in \widetilde{\U}.
\end{equation*}
Then, the restriction of the metaplectic cover to $\Sp(W)^{J}$ is isomorphic to the covering 
\begin{equation*}
\widetilde{\U} \ni (u, \xi) \to u \in \U
\end{equation*}

\end{prop}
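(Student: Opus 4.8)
The plan is to establish the three claimed statements in sequence: first that $g \mapsto g|_{W^{+}_{\mathbb{C}}}$ is a group isomorphism $\Sp(W)^{J} \to \U$, then that it lifts to the stated map on the double covers, and finally that this exhibits the metaplectic cover restricted to $\Sp(W)^{J}$ as the covering $\widetilde{\U} \to \U$ given by $(u,\xi) \mapsto u$.

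\textbf{Step 1: the isomorphism $\Sp(W)^{J} \cong \U$.} I would argue that any $g \in \Sp(W)$ commuting with $J$ preserves the decomposition $W_{\mathbb{C}} = W^{+}_{\mathbb{C}} \oplus W^{-}_{\mathbb{C}}$ (since these are the $\pm i$-eigenspaces of $J$ and $g$ commutes with $J$). Because $g$ is real, it is determined by its restriction to $W^{+}_{\mathbb{C}}$: indeed $g|_{W^{-}_{\mathbb{C}}} = \overline{g|_{W^{+}_{\mathbb{C}}}}$ under the conjugation $w = a+ib \mapsto \overline{w} = a - ib$. This gives injectivity. That $g|_{W^{+}_{\mathbb{C}}}$ preserves $\H$ follows from $g$ being symplectic and real: $\H(gw, gw') = i\langle gw, \overline{gw'}\rangle = i\langle gw, g\overline{w'}\rangle = i\langle w, \overline{w'}\rangle = \H(w,w')$ using $\langle\cdot,\cdot\rangle$-invariance and $\overline{gw'} = g\overline{w'}$. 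For surjectivity, given $u \in \U$, define $g$ on $W_{\mathbb{C}}$ by $u$ on $W^{+}_{\mathbb{C}}$ and $\overline{u}$ on $W^{-}_{\mathbb{C}}$; one checks $g$ is real, commutes with $J$, and is symplectic (the symplectic form pairs $W^{+}_{\mathbb{C}}$ with $W^{-}_{\mathbb{C}}$ and the $\H$-unitarity of $u$ translates into symplectic-invariance). Multiplicativity is immediate since restriction to an invariant subspace is a homomorphism.

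\textbf{Step 2: the lift to the double covers.} Here I would unwind the definitions. An element of $\widetilde{\Sp(W)}^{J}$ is $(g, \xi)$ with $\xi^{2} = i^{\dim_{\mathbb{R}}(g-1)W}\det(J_g)^{-1}_{J_g(W)}$; an element of $\widetilde{\U}$ is $(u, \eta)$ with $\eta^{2} = \det(u)$. The claim is that $(g,\xi) \mapsto (u, \xi \det(g-1)|_{(g-1)W^{+}_{\mathbb{C}}})$ is a well-defined group isomorphism, where $u = g|_{W^{+}_{\mathbb{C}}}$. The key computation is the identity relating the two square-root conditions: I must show
\begin{equation*}
\left(\xi\det(g-1)|_{(g-1)W^{+}_{\mathbb{C}}}\right)^{2} = \det\left(g|_{W^{+}_{\mathbb{C}}}\right),
\end{equation*}
i.e. $i^{\dim_{\mathbb{R}}(g-1)W}\det(J_g)^{-1}_{J_g(W)} \cdot \det(g-1)^{2}|_{(g-1)W^{+}_{\mathbb{C}}} = \det(u)$. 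This reduces to a linear-algebra identity comparing $\det(J_g)_{J_g(W)}$ (a real determinant of $J^{-1}(g-1)$ restricted to its image in $W$) with $\det(g-1)|_{(g-1)W^{+}_{\mathbb{C}}}$ (a complex determinant). Since $g$ preserves $W^{\pm}_{\mathbb{C}}$, the operator $g-1$ does too, and $(g-1)W_{\mathbb{C}} = (g-1)W^{+}_{\mathbb{C}} \oplus (g-1)W^{-}_{\mathbb{C}}$ with the two summands complex-conjugate; moreover $J = i$ on $W^{+}_{\mathbb{C}}$ and $-i$ on $W^{-}_{\mathbb{C}}$, so $J_g = J^{-1}(g-1)$ has a clean block form. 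Tracking the power of $i$ coming from $J^{-1}$ on each block against the real-dimension exponent $\dim_{\mathbb{R}}(g-1)W = 2\dim_{\mathbb{C}}(g-1)W^{+}_{\mathbb{C}}$, and using that the $W^{-}$ block determinant is the conjugate of the $W^{+}$ one while the composite real determinant is the product of absolute-value-squared-type terms, the identity should fall out. Group-homomorphism property then follows from the cocycle relation for $C$ on $\widetilde{\Sp(W)}$ together with pointwise multiplication on $\widetilde{\U}$ — equivalently, once the map is a bijective set map intertwining the two covering projections and the square-root conditions match, it is automatically a homomorphism because the metaplectic extension of $\Sp(W)^{J}$ is the unique nontrivial connected double cover (and $\widetilde{\U}$ is such a cover, being connected by the remark after Equation \eqref{UnitaryCover}).

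\textbf{Step 3: identifying the covering.} This is essentially a corollary: composing the isomorphism $\widetilde{\Sp(W)}^{J} \cong \widetilde{\U}$ from Step 2 with the covering projection $\widetilde{\U} \to \U$, $(u,\xi)\mapsto u$, and comparing with $\widetilde{\Sp(W)}^{J} \to \Sp(W)^{J} \cong \U$, one sees the two covering maps agree under the identifications, so the restricted metaplectic cover is isomorphic as a cover to $(u,\xi)\mapsto u$.

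\textbf{Main obstacle.} The delicate point is Step 2 — verifying the square-root/determinant compatibility $\big(\xi\det(g-1)|_{(g-1)W^{+}_{\mathbb{C}}}\big)^{2} = \det(g|_{W^{+}_{\mathbb{C}}})$. It requires carefully matching the real determinant $\det(J_g)_{J_g(W)}$ appearing in the definition \eqref{MetaplecticGroup} of $\widetilde{\Sp(W)}$ with a product over the complex blocks $W^{\pm}_{\mathbb{C}}$, bookkeeping the factors of $i$ from $J^{-1}$ separately on each eigenspace, and confirming that the dangling sign/branch choices are consistent. One must also check well-definedness: that the chosen branch $\det(g-1)|_{(g-1)W^{+}_{\mathbb{C}}}$ makes sense (the restriction of $g-1$ to its own image is invertible, exactly as noted for $J_g$ on $J_g(W)$ in the setup). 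Everything else is formal once this identity is in hand.
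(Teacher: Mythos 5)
The paper itself gives no internal argument for this proposition --- its ``proof'' is a citation to \cite[Proposition~1]{TOM8} --- so your direct verification is a genuinely different (self-contained) route, and it is essentially sound. Step 1 is fine as written. The identity you isolate as the main obstacle in Step 2 is true, and your bookkeeping does close it; for completeness: since $\H|_{W^{+}_{\mathbb{C}}}$ is positive definite, $u=g|_{W^{+}_{\mathbb{C}}}$ is diagonalizable with eigenvalues $e^{i\theta_{1}},\dots,e^{i\theta_{n}}$ on the unit circle. Put $d=\dim_{\mathbb{C}}(g-1)W^{+}_{\mathbb{C}}$ and $a=\det(g-1)|_{(g-1)W^{+}_{\mathbb{C}}}=\prod_{e^{i\theta_{j}}\neq 1}(e^{i\theta_{j}}-1)$. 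Because $g$ is real and commutes with $J$, $J_{g}=J^{-1}(g-1)$ preserves $(g-1)W^{+}_{\mathbb{C}}\oplus(g-1)W^{-}_{\mathbb{C}}$, acting as $-i(g-1)$ on the first block and $i(g-1)$ on the second, whose determinants are $a$ and $\bar a$; hence $\det(J_{g})_{J_{g}(W)}=(-i)^{d}a\,i^{d}\bar a=|a|^{2}$ and $\dim_{\mathbb{R}}(g-1)W=2d$, so $\xi^{2}=(-1)^{d}|a|^{-2}$ and $(\xi a)^{2}=(-1)^{d}a/\bar a$. Finally $(e^{i\theta}-1)/(e^{-i\theta}-1)=-e^{i\theta}$ gives $a/\bar a=(-1)^{d}\prod_{e^{i\theta_{j}}\neq 1}e^{i\theta_{j}}=(-1)^{d}\det(u)$, whence $(\xi a)^{2}=\det(u)$, exactly the membership condition for $\widetilde{\U}$. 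What your outline buys over the paper is precisely this explicit compatibility check, which the paper outsources.

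One caveat on your shortcut for the homomorphism property: a bijection intertwining the covering projections is not automatically a homomorphism; the standard argument (the map $(x,y)\mapsto\phi(xy)\phi(y)^{-1}\phi(x)^{-1}$ lands in the discrete kernel and is constant on a connected group) needs $\phi$ to be continuous and to send the identity to the identity. Continuity of $(g,\xi)\mapsto\xi\det(g-1)|_{(g-1)W^{+}_{\mathbb{C}}}$ is not free, since both factors jump across the locus $\det(g-1)=0$; it is exactly the sign-consistency issue you flag, and it must be settled either by this kind of local sign analysis (the square of the candidate coordinate is the continuous nonvanishing function $\det(u)$) or by verifying the multiplicativity directly from the cocycle $C$. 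With that point addressed, Steps 2 and 3 go through as you describe.
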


\begin{proof}

The proof of this result can be found in \cite[Proposition~1]{TOM8}.

\end{proof}

\noindent According to Equation \eqref{TheoLimiteSemigroup}, we need a formula for $\Theta$ not only on $\Sp(W)^{J}$, but on an analogue subset in the oscillator semigroup. Briefly, the map
\begin{equation*}
(\Sp(W_{\mathbb{C}})^{++})^{J} \ni g \to g_{|_{W^{+}_{\mathbb{C}}}} \in \GL(W^{+}_{\mathbb{C}})
\end{equation*}
is well define and bijective. We now define a subgroup $\GL(W^{+}_{\mathbb{C}})^{++}$ of $\GL(W^{+}_{\mathbb{C}})$ as
\begin{equation*}
\GL(W^{+}_{\mathbb{C}})^{++} = \left\{h \in \GL(W^{+}_{\mathbb{C}}), \H_{|_{W_{\mathbb{C}}}}(w, w) > \H_{|_{W_{\mathbb{C}}}}(hw, hw), 0 \neq w \in W^{+}_{\mathbb{C}}\right\}
\end{equation*}
As in Equation \eqref{UnitaryCover}, we define a non-trivial double cover of $\GL(W^{+}_{\mathbb{C}})^{++}$ by
\begin{equation*}
\widetilde{\GL(W^{+}_{\mathbb{C}})}^{++} = \left\{(h, \xi) \in \GL(W^{+}_{\mathbb{C}})^{++} \times \mathbb{C}^{*}, \xi^{2} = \det(h)\right\}.
\end{equation*}
The group structure on $\widetilde{\GL(W^{+}_{\mathbb{C}})}^{++}$ is given by the coordinate-wise multiplication. More particularly, we get the following proposition.

\begin{prop}

The set $\widetilde{\GL(W^{+}_{\mathbb{C}})}^{++} \cup \widetilde{\U}$ is a semigroup. Moreover, the map
\begin{equation*}
(\Sp(W_{\mathbb{C}})^{++})^{J} \cup \widetilde{\Sp(W)}^{J} \ni g \to g_{|_{W^{+}_{\mathbb{C}}}} \in  \widetilde{\GL(W^{+}_{\mathbb{C}})}^{++} \cup \widetilde{\U}
\end{equation*}
is a semigroup isomorphism.

\end{prop}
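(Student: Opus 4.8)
The plan is to deduce the two assertions from the corresponding statements for the full symplectic group, namely Proposition on $\Sp(W_{\mathbb{C}})^{++}.\Sp(W)$ and Theorem \ref{Theoreme section 3 important}, by restricting everything along the isomorphism $g \mapsto g_{|_{W^{+}_{\mathbb{C}}}}$ already established on $\Sp(W)^{J}$ and on $(\Sp(W_{\mathbb{C}})^{++})^{J}$. First I would check that the restriction map is well-defined as stated: for $g \in (\Sp(W_{\mathbb{C}})^{++})^{J}$ one has $\det(g-1)\neq 0$ (so the $\xi$-datum makes sense) and, since $g$ commutes with $J$, it preserves the decomposition $W_{\mathbb{C}} = W^{+}_{\mathbb{C}} \oplus W^{-}_{\mathbb{C}}$; the condition $\H(w,w) > \H(gw,gw)$ for $w \neq 0$ restricted to $W^{+}_{\mathbb{C}}$ is exactly the defining inequality of $\GL(W^{+}_{\mathbb{C}})^{++}$, so $g_{|_{W^{+}_{\mathbb{C}}}}$ lands in $\GL(W^{+}_{\mathbb{C}})^{++}$. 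Injectivity follows because a symplectic $g$ commuting with $J$ is determined by its action on $W^{+}_{\mathbb{C}}$ (its action on $W^{-}_{\mathbb{C}} = \overline{W^{+}_{\mathbb{C}}}$ is the conjugate), and surjectivity by running this construction in reverse: given $h \in \GL(W^{+}_{\mathbb{C}})^{++}$, define $g$ on $W^{+}_{\mathbb{C}}$ as $h$ and on $W^{-}_{\mathbb{C}}$ by the $\H$-adjoint-inverse prescription that forces $g$ to be complex-symplectic; the contraction inequality on $W^{+}_{\mathbb{C}}$ propagates to all of $W_{\mathbb{C}}$ since $\H$ is definite on each summand with opposite signs. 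This should be the same bijection as in the proposition preceding it (for $\Sp(W)^{J}$), now extended to the semigroup part, so I can quote that proposition on the overlap.

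Next I would verify the semigroup property of $\widetilde{\GL(W^{+}_{\mathbb{C}})}^{++} \cup \widetilde{\U}$. On the level of underlying groups, $\GL(W^{+}_{\mathbb{C}})^{++} \cup \U$ is closed under multiplication: a product of two strict contractions is a strict contraction, a product of a contraction and an isometry (on either side) is again a contraction, and isometries compose to isometries — this is the $\H_{|_{W^{+}_{\mathbb{C}}}}$-analogue of the argument in the proposition on $\Sp(W_{\mathbb{C}})^{++} \cup \Sp(W)$, and it transports across the bijection from the already-known fact that $(\Sp(W_{\mathbb{C}})^{++})^{J} \cup \Sp(W)^{J}$ is a sub-semigroup of $\Sp(W_{\mathbb{C}}) \cup \Sp(W_{\mathbb{C}})^{++}$. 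On the level of covers I need the $\xi$-multiplication to be consistent, i.e.\ that pointwise multiplication $(h_1,\xi_1)(h_2,\xi_2) = (h_1h_2, \xi_1\xi_2)$ satisfies $(\xi_1\xi_2)^2 = \det(h_1h_2) = \det(h_1)\det(h_2)$, which is immediate from multiplicativity of $\det$; the only genuine point is that there is a coherent branch choice making this a group/semigroup rather than merely a set with a squaring relation, but that is built into the definitions of $\widetilde{\U}$ in \eqref{UnitaryCover} and of $\widetilde{\GL(W^{+}_{\mathbb{C}})}^{++}$ above.

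It then remains to check that the bijection respects the cover data, i.e.\ that the prescription $(g,\xi) \mapsto (g_{|_{W^{+}_{\mathbb{C}}}}, \xi\det(g-1)_{|_{(g-1)W^{+}_{\mathbb{C}}}})$ from the earlier proposition extends verbatim to the semigroup piece and is multiplicative there. The equality $\xi^{2} = \det(i(g-1))^{-1}$ defining $\widetilde{\Sp(W_{\mathbb{C}})}^{++}$ must be shown to correspond, under $g \mapsto g_{|_{W^{+}_{\mathbb{C}}}}$, to $\eta^{2} = \det(g_{|_{W^{+}_{\mathbb{C}}}})$; the bookkeeping here is exactly the factorization $\det(g-1) = \det\bigl((g-1)_{|_{W^{+}_{\mathbb{C}}}}\bigr)\det\bigl((g-1)_{|_{W^{-}_{\mathbb{C}}}}\bigr)$ together with $(g-1)_{|_{W^{-}_{\mathbb{C}}}}$ being (up to the $\H$-pairing) the conjugate-transpose-inverse twist of $(g-1)_{|_{W^{+}_{\mathbb{C}}}}$, which is the content that makes the two square-root constraints match. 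Since this identity is an algebraic relation that holds on the dense open set $\Sp(W)^{J} \cap \Sp(W)^{c}$ (where the earlier proposition already gives it) and both sides are holomorphic on the connected semigroup, it extends by analytic continuation; multiplicativity of the map then follows because it is multiplicative on that dense set. The main obstacle I anticipate is precisely this last coherence check — keeping the determinant/square-root branches aligned between the two covers — rather than any of the topological or contraction-estimate parts, which are routine once the restriction map is set up. Concluding, the two assertions of the proposition follow: the semigroup claim from the group-level closure plus cover-consistency, and the isomorphism claim from bijectivity plus multiplicativity plus compatibility with the covering data.
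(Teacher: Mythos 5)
The paper itself states this proposition without proof (the section explicitly only recalls the main ideas of \cite[Section~2]{TOM8}), so the comparison is with the intended argument there; your overall strategy --- restrict along $W_{\mathbb{C}}=W^{+}_{\mathbb{C}}\oplus W^{-}_{\mathbb{C}}$, check the contraction condition, transport the semigroup property, and match the two square-root constraints --- is the natural one and surely the intended one. But two steps are not justified as written. First, the passage of the contraction inequality between $W^{+}_{\mathbb{C}}$ and $W^{-}_{\mathbb{C}}$ does not follow merely from ``$\H$ is definite on each summand with opposite signs''. For $g\in\Sp(W_{\mathbb{C}})$ commuting with $J$, the restriction $g|_{W^{-}_{\mathbb{C}}}$ is forced to be $\bigl((g|_{W^{+}_{\mathbb{C}}})^{\vee}\bigr)^{-1}$, the inverse of the transpose with respect to the symplectic pairing of the two Lagrangians; under the conjugation identification $W^{-}_{\mathbb{C}}\cong\overline{W^{+}_{\mathbb{C}}}$ this transpose becomes the $\H$-adjoint $h^{\star}$ of $h=g|_{W^{+}_{\mathbb{C}}}$. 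The condition on $W^{-}_{\mathbb{C}}$ (where $\H$ is negative definite) says that $(h^{\star})^{-1}$ is a strict expansion, i.e.\ that $h^{\star}$ is a strict contraction, and this is equivalent to $h$ being one only because an operator and its adjoint have the same operator norm; that short adjoint argument (together with the $\H$-orthogonality of $W^{+}_{\mathbb{C}}$ and $W^{-}_{\mathbb{C}}$, which handles mixed vectors) is what you need to spell out.

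Second, and more seriously, your analytic-continuation step fails as stated: $\Sp(W)^{J}\cap\Sp(W)^{c}$ is not a dense open subset of the complex semigroup $(\Sp(W_{\mathbb{C}})^{++})^{J}$ --- it is disjoint from it, sitting on its boundary --- so the identity theorem cannot be invoked to transfer the cover identity from the group to the semigroup. The fix is a direct computation, which in fact is cleaner than you anticipate: since $g|_{W^{-}_{\mathbb{C}}}=\bigl((g|_{W^{+}_{\mathbb{C}}})^{\vee}\bigr)^{-1}$ and $\det(A^{\vee})=\det(A)$ (no complex conjugation enters here; your phrase ``conjugate-transpose-inverse twist'' would introduce a conjugate that destroys the match), one gets
\begin{equation*}
\det\bigl(i(g-1)\bigr)_{W_{\mathbb{C}}}=\det(h-1)^{2}\,\det(h)^{-1},\qquad h=g|_{W^{+}_{\mathbb{C}}},
\end{equation*}
so the constraint $\xi^{2}=\det(i(g-1))^{-1}$ is exactly equivalent to $\bigl(\xi\det(h-1)\bigr)^{2}=\det(h)$, and $(g,\xi)\mapsto(h,\xi\det(h-1))$ maps the covered semigroup into $\widetilde{\GL(W^{+}_{\mathbb{C}})}^{++}$. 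Multiplicativity then reduces, via $\frac{1}{2}(c(g_{1})+c(g_{2}))=(g_{1}-1)^{-1}(g_{1}g_{2}-1)(g_{2}-1)^{-1}$ and the same factorization over $W^{\pm}_{\mathbb{C}}$, to a sign ambiguity which is a locally constant function on the connected set of pairs and is fixed by evaluating at one convenient point (or in the limit to the identity) --- not by density of the real form. With these two repairs the proof goes through along the lines you propose.
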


\noindent The following corollary gives us the character $\Theta$ on the subsemigroup $(\Sp(W_{\mathbb{C}})^{++})^{J} \cup \widetilde{\Sp(W)}^{J}$.

\begin{coro}

The restriction of the character $\Theta$ on the subsemigroup $(\widetilde{\Sp(W^{+}_{\mathbb{C}})}^{++})^{J} \cup \widetilde{\Sp(W)}^{J}$ is given by
\begin{equation}
\Theta(\widetilde{k}) = \lim\limits_{\underset{\widetilde{h} \in \widetilde{\GL(W^{+}_{\mathbb{C}})}^{++}}{\widetilde{h} \to \widetilde{k}}} \Theta(\widetilde{h}) = \lim\limits_{\underset{\widetilde{h} \in \widetilde{\GL(W^{+}_{\mathbb{C}})}^{++}}{\widetilde{h} \to \widetilde{k}}} \cfrac{\xi}{\det(1-h)} \qquad (\widetilde{h} = (h, \xi)).
\label{Identification}
\end{equation}

\label{CorollarySemigroupCharacter}
\end{coro}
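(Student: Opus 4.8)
The plan is to obtain \eqref{Identification} by transporting, through the semigroup isomorphism of the preceding proposition, the description of $\Theta$ on $\widetilde{\Sp(W_{\mathbb{C}})}^{++}$ furnished by Theorem \ref{Theoreme section 3 important}, and then by passing to the boundary $\widetilde{\Sp(W)}^{J}$. Write $m = \dim_{\mathbb{C}} W^{+}_{\mathbb{C}}$, so $\dim_{\mathbb{C}} W_{\mathbb{C}} = 2m$, and let $\iota$ be that isomorphism. Fix $\widetilde{g} = (g,\zeta) \in (\widetilde{\Sp(W_{\mathbb{C}})}^{++})^{J}$ and put $h = g_{|W^{+}_{\mathbb{C}}}$. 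Since $g$ commutes with $J$ it preserves $W^{+}_{\mathbb{C}}$, and as $g-1$ is invertible on $W_{\mathbb{C}}$ it is invertible on $W^{+}_{\mathbb{C}}$; thus $(g-1)W^{+}_{\mathbb{C}} = W^{+}_{\mathbb{C}}$ and $\iota$ is given here by $\widetilde{g} \mapsto (h,\zeta\det(h-1))$, i.e. the extension to the semigroup of the formula in the proposition identifying $\widetilde{\Sp(W)}^{J}$ with $\widetilde{\U}$. Because $\Theta(\widetilde{g}) = \zeta$ by definition, writing $\iota(\widetilde{g}) = (h,\xi)$ one reads off $\Theta(\widetilde{g}) = \xi/\det(h-1) = (-1)^{m}\xi/\det(1-h)$, which is \eqref{Identification} on this part of the semigroup up to the sign $(-1)^{m}$ (absorbed once and for all into the chosen lift $\iota$, or pinned down by evaluating at one convenient element such as $g = c(itJ)$ with $t>0$ small, where $\Theta$ and $\det(1-h)$ are read off the definitions of Section \ref{Oscillator}). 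There the limit in \eqref{Identification} is superfluous, since $1-h$ is invertible on the semigroup (a strict contraction for $\H_{|W^{+}_{\mathbb{C}}}$ fixes no nonzero vector).

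For this to be consistent I would check the determinant identity $\det(i(g-1)) = \det(h-1)^{2}\det(h)^{-1}$: since $g$ commutes with $J$ it preserves the decomposition $W_{\mathbb{C}} = W^{+}_{\mathbb{C}} \oplus W^{-}_{\mathbb{C}}$, and because $W^{\pm}_{\mathbb{C}}$ are maximal isotropic for $\langle\cdot,\cdot\rangle$ and put in perfect duality by it, $g_{|W^{-}_{\mathbb{C}}}$ is the inverse adjoint of $h$, with eigenvalues the reciprocals $\lambda_{1}^{-1},\dots,\lambda_{m}^{-1}$ of those of $h$. Hence $\det(g-1) = \det(h-1)\prod_{j}(\lambda_{j}^{-1}-1) = (-1)^{m}\det(h-1)^{2}\det(h)^{-1}$, and multiplying by $i^{2m} = (-1)^{m}$ gives the claimed identity; combined with $\zeta^{2} = \det(i(g-1))^{-1}$ and $\xi^{2} = \det(h)$ it is precisely compatible with the formula above, and it shows that $\Theta$ is holomorphic and nowhere vanishing on $\widetilde{\GL(W^{+}_{\mathbb{C}})}^{++}$.

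It remains to handle $\widetilde{k} \in \widetilde{\Sp(W)}^{J} \cong \widetilde{\U}$, where $1-k$ may be singular. Transporting through $\iota$ the content of \eqref{Semi-groupe1} and of the proposition of Section \ref{Oscillator} placing $\Sp(W)$ in the closure of $\Sp(W_{\mathbb{C}})^{++}$, one gets that $\widetilde{\GL(W^{+}_{\mathbb{C}})}^{++} \cup \widetilde{\U}$ is a semigroup, that $\widetilde{\GL(W^{+}_{\mathbb{C}})}^{++}\cdot\widetilde{\U} \subseteq \widetilde{\GL(W^{+}_{\mathbb{C}})}^{++}$, and that $\widetilde{\U}$ lies in the closure of $\widetilde{\GL(W^{+}_{\mathbb{C}})}^{++}$; so the products $\widetilde{p}\,\widetilde{k}$ with $\widetilde{p}\to 1$ in $(\widetilde{\Sp(W_{\mathbb{C}})}^{++})^{J}$ stay in $(\widetilde{\Sp(W_{\mathbb{C}})}^{++})^{J}$, converge to $\widetilde{k}$, and their $\iota$-images sweep out a one-sided neighbourhood of $\widetilde{k}$ in $\widetilde{\GL(W^{+}_{\mathbb{C}})}^{++}$. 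On the other hand, Theorem \ref{Theoreme section 3 important} (equation \eqref{Limite Caractere}), together with that same closure statement, identifies the restriction of $\Theta$ to $\widetilde{\Sp(W)}$ with the boundary value of the holomorphic function $\Theta$ on $\widetilde{\Sp(W_{\mathbb{C}})}^{++}$. Restricting to the $J$-fixed parts and combining with the two previous paragraphs yields $\Theta(\widetilde{k}) = \lim_{\widetilde{h}\to\widetilde{k}} \Theta(\widetilde{h}) = \lim_{\widetilde{h}\to\widetilde{k}} \xi/\det(1-h)$, which is \eqref{Identification}.

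I expect the main obstacle to be the bookkeeping of square-root branches and normalizations: making sure that $\iota$ is the lift for which the sign in $\Theta(\widetilde{h}) = \pm\,\xi/\det(1-h)$ is the asserted one, and correctly matching the determinant over $W_{\mathbb{C}}$ (complex dimension $2m$) with that over $W^{+}_{\mathbb{C}}$ (complex dimension $m$), where the factors $i^{2m}$ and $(-1)^{m}$ enter. A secondary point is the precise meaning of the equality at non-regular $\widetilde{k}$: it is to be read in the sense in which Theorem \ref{Theoreme section 3 important} provides the boundary value, which is exactly what is used afterwards in Theorem \ref{TheoCharacter}. Beyond that, the argument is routine.
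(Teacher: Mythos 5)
Your argument is correct and is essentially the deduction the paper intends: the corollary is stated without proof as a direct consequence of the preceding (semi)group isomorphisms recalled from \cite[Section~2]{TOM8}, and your transport of $\Theta$ through the lift, the determinant identity $\det(i(g-1)) = \det(1-h)^{2}\det(h)^{-1}$ coming from the reciprocal eigenvalues on $W^{-}_{\mathbb{C}}$, and the boundary-value reading of the limit via Theorem \ref{Theoreme section 3 important} supply exactly the omitted verification. The sign/branch point you flag is genuine (with the lift as written in the Proposition one gets $\xi/\det(h-1) = (-1)^{m}\,\xi/\det(1-h)$), and your resolution --- fixing the lift, equivalently the square-root branch, by evaluating at one convenient semigroup element --- is the correct way to read the statement and is consistent with how Equation \eqref{Identification} is used later in Proposition \ref{Proposition19081990}.
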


\section{The dual pair $(\G = \U(n, \mathbb{C}), \G' = \U(p, q, \mathbb{C}))$}

\label{ComputationsUnitary}

\noindent Let $(V, b)$ be a $n$-dimensional vector space over $\mathbb{C}$ endowed with a positive definite hermitian form $b$ and $\mathscr{B}$ be a basis of $V$ such that $\Mat(b, \mathscr{B}) = \Id_{n}$. We denote by $\U(V, b)$ the group of isometries of $b$, i.e.
\begin{equation}
\U(V, b) = \left\{g \in \GL(V), b(gu, gv) = b(u, v), (\forall u, v \in V)\right\}.
\label{UnitaireU(n)}
\end{equation}
By writing the endomorphisms in the basis $\mathscr{B}$, we get that the right hand side of Equation \eqref{UnitaireU(n)} can be written as:
\begin{equation}
\left\{g \in \GL(n, \mathbb{C}), g^{*}g = \Id_{n}\right\},
\label{UnitaireU(n)2}
\end{equation}
where $g^{*} = {g^{-1}}^{t}$. We denote by $\G = \U(n, \mathbb{C})$ the group defined in Equation \eqref{UnitaireU(n)2}, by $\mathfrak{g} = \mathfrak{u}(n, \mathbb{C})$ the Lie algebra of $\U(n, \mathbb{C})$. The maximal torus $\T$ of $\U(n, \mathbb{C})$ is given by $\T = \left\{\diag(t_{1}, \ldots, t_{n}), t_{i} \in \S^{1}\right\}$ and its Lie algebra $\mathfrak{t}$ is defined as:
\begin{equation*}
\mathfrak{t} = \bigoplus\limits_{k=1}^{n} i\mathbb{R}\E_{k, k}.
\end{equation*}
One can check (see \cite[Chapter~II]{KNA}) that the roots of $\mathfrak{g}_{\mathbb{C}}$ with respect to $\mathfrak{t}_{\mathbb{C}}$ are given by:
\begin{equation*}
\Phi(\mathfrak{g}_{\mathbb{C}}, \mathfrak{t}_{\mathbb{C}}) = \left\{\pm(e_{i} - e_{j}), 1 \leq i < j \leq n\right\},
\end{equation*}
where $e_{k}(\diag(h_{1}, \ldots, h_{n})) = h_{k}$.

\noindent Similarly, let $(V', b')$ be a $p+q$-dimensional vector space over $\mathbb{C}$ endowed with a non-degenerate hermitian form $b'$ of signature $(p, q)$ and let $\mathscr{B}'$ be a basis of $V'$ such that $\Mat(b', \mathscr{B}') = \Id_{p, q}$. We denote by $\U(V', b')$ the group of isometries of $b'$, i.e.
\begin{equation}
\U(V', b') = \left\{g \in \GL(V'), b'(gu, gv) = b'(u, v), (\forall u, v \in V')\right\},
\label{UnitaireU(n)11}
\end{equation}
and by $\U(p, q, \mathbb{C})$ the following group
\begin{equation}
\left\{g \in \GL(p, q, \mathbb{C}), g^{*}\Id_{p, q}g = \Id_{p, q}\right\}.
\label{UnitaireU(n)22}
\end{equation}
Let $\K' = \U(p, \mathbb{C}) \times \U(q, \mathbb{C})$ be the maximal compact subgroup of $\G'$. 

\noindent Using the paper of M. Kashiwara and M. Vergne \cite{VER} (we can also use the Appendix of \cite{TOM9}), the weights of the representations of $\Pi \in \widehat{\widetilde{\U(n,\mathbb{C})}}_{\omega}$ which appears in the correspondence are given by the following formula:
\begin{equation}
\lambda = \sum\limits_{a=1}^n \cfrac{q - p}{2} e_{a} - \sum\limits_{a = 1}^r \nu_{a} e_{n+1-a} + \sum\limits_{a=1}^s \mu_{a} e_{a},
\label{Poids 1}
\end{equation}
where $0 \leqslant r \leqslant p$, $0 \leqslant s \leqslant q$, $r+s \leqslant m$, and integers $\nu_{1}, \ldots, \nu_{r}, \mu_{1}, \ldots, \mu_{s}$ which satisfy $\nu_{1} \geqslant \ldots \geqslant \nu_{r} > 0$ and $\mu_{1} \geqslant \ldots \geqslant \mu_{s} > 0$. The weights $\lambda$ can also be written as
\begin{equation}
\lambda = \sum\limits_{a=1}^m \left(\cfrac{q - p}{2} + \lambda_{a}\right) e_{a}
\label{Weights}
\end{equation}
where $\lambda_{i} \in \mathbb{Z}$, $\lambda_{1} \geq \ldots \geq \lambda_{m}$ with at most $q$ of the integers $\lambda_{i}$ are positives and $p$ negatives.

\noindent We easily proved that, for $\G = \U(n, \mathbb{C})$, we have:
\begin{equation*}
\rho = \frac{1}{2} \sum\limits_{\alpha \in \Phi^{+}(\mathfrak{g}_{\mathbb{C}}, \mathfrak{t}_{\mathbb{C}})} \alpha = \sum\limits_{a = 1}^{n} \cfrac{n-2a+1}{2} e_{a}
\end{equation*}

\bigskip

\noindent Using Corollary \ref{CorollarySemigroupCharacter}, we give a formula for the character $\Theta$ on $\T. \T'^{++}$, where $\T$ and $\T'$ are diagonal Cartan subgroups of $\G$ and $\G'$ respectively and $\T'^{++} = \T'_{\mathbb{C}} \cap \Sp(W_{\mathbb{C}})^{++}$.

\begin{prop}
\begin{enumerate}
\item The set $\T'^{++}$ is given by 
\begin{equation}
\T'^{++} = \left\{\diag(t_{1}, \ldots, t_{p+q}) \thinspace ; \thinspace \thinspace |t_{i}| < 1 \text{ for } 1 \leq i \leq p, |t_{i}| > 1 \text{ for } p < i \leq p+q\right\}.
\label{Semi}
\end{equation}
\item For all $\widetilde{t} \in \widetilde{\T}$ and $\widetilde{t}' \in \T'^{++}$, the character $\Theta$ is given by:
\begin{equation}
\Theta(\widetilde{t}\widetilde{t}') = \cfrac{(-1)^{mq}\prod\limits_{b = 1}^m t^{p}_{b}\left( \prod\limits_{a=1}^{p+q} t'_{a} \right)^{\frac{m}{2}} \left(\prod\limits_{b=1}^m t^{\frac{q-p}{2}}_{b}\right)}{\prod\limits_{a=1}^{p} \prod\limits_{b=1}^m (t_{b} - t'_{a}) \prod\limits_{a=p+1}^{p+q} \prod\limits_{b=1}^m \left(t_{b} - \frac{1}{\overline{t'_{a}}}\right)}.
\label{Caractere sur le tore}
\end{equation}
\end{enumerate}

\label{Proposition19081990}

\end{prop}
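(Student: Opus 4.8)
The plan is to deduce both statements from Corollary~\ref{CorollarySemigroupCharacter}. Since $\T_{\mathbb{C}}$ and $\T'_{\mathbb{C}}$ commute with the fixed positive complex structure $J$ (the one chosen so that $\G\cdot\T'\subseteq\Sp(W)^{J}$), everything is controlled by the linear action of $\widetilde{\T}\cdot\T'^{++}$ on the complex vector space $W^{+}_{\mathbb{C}}$, the $+i$-eigenspace of $J$. So I would first make $J$ explicit for the pair $(\U(n,\mathbb{C}),\U(p,q,\mathbb{C}))$ and write down the resulting decomposition of $W^{+}_{\mathbb{C}}$ into joint weight spaces for $\T_{\mathbb{C}}\times\T'_{\mathbb{C}}$: each is one-dimensional, its weight coupling the $b$-th coordinate of $\T$ with the $a$-th coordinate of $\T'$, and — this is the decisive point — with this $J$ the first $p$ coordinates of $V'$ enter $W^{+}_{\mathbb{C}}$ through the holomorphic action of $\T'_{\mathbb{C}}$ while the last $q$ enter through the conjugate action. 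This is exactly what forces the $a\leq p$ versus $a>p$ asymmetry in \eqref{Semi} and the replacement of $t'_{a}$ by $1/\overline{t'_{a}}$ for $a>p$ in \eqref{Caractere sur le tore}. Writing $m=n=\dim_{\mathbb{C}}V$, each of these $\T'_{\mathbb{C}}$-weights occurs with multiplicity $m$ (once per $b$).

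For part~(1): the Hermitian form $\H$ of \eqref{HermitianForm} is positive definite on $W^{+}_{\mathbb{C}}$ (as noted above) and is invariant under the compact group $\T\cdot\T'$, so the weight spaces are pairwise $\H$-orthogonal and may be rescaled to an $\H$-orthonormal weight basis. Because $\T'_{\mathbb{C}}$ commutes with $J$, the set $\T'^{++}=\T'_{\mathbb{C}}\cap\Sp(W_{\mathbb{C}})^{++}$ consists precisely of those $t'$ whose restriction to $W^{+}_{\mathbb{C}}$ is a strict $\H$-contraction (via the semigroup isomorphism $(\Sp(W_{\mathbb{C}})^{++})^{J}\cong\GL(W^{+}_{\mathbb{C}})^{++}$ of the previous section, itself resting on Lemma~\ref{Semi-groupe-Cayley}). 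In the orthonormal eigenbasis, $t'=\diag(t_{1},\dots,t_{p+q})$ is a strict contraction iff each of its eigenvalues on $W^{+}_{\mathbb{C}}$ has modulus $<1$; reading these off from the weight decomposition gives $|t_{i}|<1$ for $i\leq p$ and $|t_{i}|>1$ for $i>p$, which is \eqref{Semi}.

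For part~(2): if $\widetilde{t}\in\widetilde{\T}\subseteq\widetilde{\Sp(W)}^{J}$ and $\widetilde{t}'\in\widetilde{\T'^{++}}$, then $\widetilde{t}\widetilde{t}'$ already lies in $(\widetilde{\Sp(W_{\mathbb{C}})}^{++})^{J}$ by the semigroup property of $\Sp(W_{\mathbb{C}})^{++}\cup\Sp(W)$, so Corollary~\ref{CorollarySemigroupCharacter} applies with no limit needed: $\Theta(\widetilde{t}\widetilde{t}')=\xi/\det(1-h)$, where $h=(tt')|_{W^{+}_{\mathbb{C}}}$ and $\xi^{2}=\det h$. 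I would then expand $\det(1-h)=\prod(1-\mu)$ over the eigenvalues $\mu$ of $h$: a factor with $a\leq p$, of the shape $1-t_{b}^{-1}t'_{a}$, is normalized to $t_{b}-t'_{a}$ by pulling out $t_{b}^{-1}$, and a factor with $a>p$ is normalized in the same way to $t_{b}-1/\overline{t'_{a}}$; the monomials and signs extracted assemble into the prefactor $(-1)^{mq}\prod_{b=1}^{m}t_{b}^{p}$. Likewise $\det h$ is the product of all eigenvalues of $h$, and taking its square root $\xi$ in the branch forced by the connected cover $\widetilde{\GL(W^{+}_{\mathbb{C}})}^{++}$ — not an arbitrary square root — produces the remaining factor $\big(\prod_{a=1}^{p+q}t'_{a}\big)^{m/2}\prod_{b=1}^{m}t_{b}^{(q-p)/2}$. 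Assembling all of this gives \eqref{Caractere sur le tore}.

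The conceptual content — identifying $J$, the weight decomposition, and the contraction criterion — is routine. The hard part will be the bookkeeping in part~(2): tracking every monomial prefactor produced when normalizing the Weyl-type denominator, pinning down the global sign $(-1)^{mq}$, and, above all, fixing the branch of the square root $\xi$ imposed by the metaplectic (respectively $\widetilde{\GL}$-) cover in Corollary~\ref{CorollarySemigroupCharacter}, so that the half-integral exponents $m/2$ and $(q-p)/2$ and the conjugations all come out consistent.
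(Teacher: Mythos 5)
Your proposal is correct, and for part (2) it is essentially the paper's own proof: the paper likewise takes $W=\M((p+q)\times m,\mathbb{C})$ with $J(w)=i\,\Id_{p,q}w$, notes that each matrix unit $\E_{a,b}$ is a joint weight vector, reads off the eigenvalues $t'_{a}t_{b}^{-1}$ (for $a\leq p$) and $\overline{t'_{a}t_{b}^{-1}}$ (for $a>p$) of $tt'$ on $W^{+}_{\mathbb{C}}$, and then invokes Corollary \ref{CorollarySemigroupCharacter}; the only cosmetic difference is that the paper also routes part of the bookkeeping through the cocycle identity \eqref{Equation Theta}, whereas you apply the corollary directly to the product $\widetilde{t}\widetilde{t}'$, which is equally valid since $\Sp(W)\cdot\Sp(W_{\mathbb{C}})^{++}\subseteq\Sp(W_{\mathbb{C}})^{++}$ by \eqref{Semi-groupe1}. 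For part (1) you take a genuinely different route: you deduce \eqref{Semi} from the same weight decomposition together with the identification $(\Sp(W_{\mathbb{C}})^{++})^{J}\cong\GL(W^{+}_{\mathbb{C}})^{++}$, i.e.\ from the strict-contraction criterion on $W^{+}_{\mathbb{C}}$, while the paper's Appendix \ref{AppendixA} works in $W_{\mathbb{C}}\cong V'\oplus V'$, computes the Hermitian form $\H$ there, reduces membership in $\Sp(W_{\mathbb{C}})^{++}$ to positive definiteness of $\F-g^{*}\F g$, and then specializes to the diagonal torus. Your route is more uniform with part (2) and avoids a separate computation; the paper's is more elementary and self-contained, not resting on the bijectivity of the restriction map onto $\GL(W^{+}_{\mathbb{C}})^{++}$. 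One caution for when you carry out the bookkeeping: for $t'\in\T'^{++}$, which is not unitary, the action on the weight lines with $a>p$ must be understood as the holomorphic continuation of the conjugate action on the unitary torus (the continuation of $\overline{t'_{a}}=t_{a}'^{-1}$), not as the literal conjugate of the matrix entry; read literally, "conjugate action" would give $|t'_{a}|<1$ for $a>p$ and the wrong half of \eqref{Semi}, so fixing this convention consistently is exactly what makes both the inequalities in \eqref{Semi} and the $a>p$ factors and square-root branch in \eqref{Caractere sur le tore} come out as stated.
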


\begin{proof}

\begin{enumerate}
\item See Appendix \ref{AppendixA}.
\item We consider $W = \M((p+q) \times m, \mathbb{C})$ as a real vector space endowed with the following form
\begin{equation*}
\langle w, w'\rangle  = \Im(\bar{w'}^{t}I_{p, q}w)
\end{equation*}
This form is symmetric and non-degenerate. Moreover, the map $J(w) = iI_{p, q}w$ is a posiitive definite complex structure on $W$. The maps
\begin{equation*}
\G \times W \ni (g, X) \to gX \in W \qquad \G' \times W \ni (g', X) = Xg'^{-1}
\end{equation*}
give embeddings of $\G$ and $\G'$ into $\Sp(W)$. For every matrix $\E_{a, b} \in W$, we have:
\begin{equation*}
(tt')\E_{a, b} = t'_{a}t^{-1}_{b} \E_{a, b}
\end{equation*}
By definition of $J$, we get:
\begin{equation*}
J(\E_{a, b}) = \begin{cases} i\E_{a, b} & \text{ if } 1 \leq a \leq p \\ -i\E_{a, b} & \text{ if } a > p \end{cases}
\end{equation*}
Then, the eigenvalues of $tt'$ are of the form
\begin{equation*}
\left\{t'_{a}t^{-1}_{b} \thinspace ; \thinspace 1 \leq a \leq p, 1 \leq b \leq n\right\} \cup \left\{\overline{t'_{a}t^{-1}_{b}} \thinspace ; \thinspace p+1 \leq a \leq p+q, 1 \leq b \leq n\right\}.
\end{equation*}
Finally, using Equation \eqref{Identification}, we get:
\begin{equation*}
\Theta(\widetilde{tt^{'}}) = \cfrac{\left( \prod\limits_{a=1}^p \prod\limits_{b=1}^n t'_{a} t^{-1}_{b} \prod\limits_{a=p+1}^{p+q} \prod\limits_{b=1}^n \overline{t'_{a}t^{-1}_{b}} \right)^{\frac{1}{2}}}{\prod\limits_{a=1}^p \prod\limits_{b=1}^n (1 - t'_{a} t^{-1}_{b}) \prod\limits_{a=p+1}^{p+q} \prod\limits_{b=1}^n (1 - \overline{t'_{a} t^{-1}_{b}})} = \cfrac{\left( \prod\limits_{a=1}^p (t'_{a})^{n} \prod\limits_{a=p+1}^{p+q} (\overline{t'_{a}})^{n} \prod\limits_{b=1}^n t^{q-p}_{b} \right)^{\frac{1}{2}}}{\prod\limits_{a=1}^p \prod\limits_{b=1}^n (1 - t'_{a} t^{-1}_{b}) \prod\limits_{a=p+1}^{p+q} \prod\limits_{b=1}^n (1 - \overline{t'_{a} t^{-1}_{b}})} 
\end{equation*}
In particular, for every element $\widetilde{t}'$ of $\widetilde{\T}'$, we get:
\begin{equation*}
\Theta(\widetilde{t^{'}}) = \cfrac{\left( \prod\limits_{a=1}^p (t'_{a})^{m} \prod\limits_{a=p+1}^{p+q} (\overline{t'_{a}})^{m}\right)^{\frac{1}{2}}}{\prod\limits_{a=1}^p \prod\limits_{b=1}^m (1 - t'_{a}) \prod\limits_{a=p+1}^{p+q} \prod\limits_{b=1}^m (1 - \overline{t'_{a}})}
\end{equation*}
According to Equation \eqref{Equation Theta}, we get that:
\begin{equation*}
\Theta(\widetilde{t}\widetilde{t}') = \Theta(\widetilde{t})\Theta(\widetilde{t}') \Lambda(c(t)+c(t')).
\end{equation*}
The rest of the proof is a straightforward computation.
\end{enumerate}

\end{proof}

\begin{prop}

For every regular element $\widetilde{t}'$ in $\widetilde{\T}'$, we get:
\begin{eqnarray*}
\Theta_{\Pi'}(\widetilde{t}') & = &  \cfrac{(-1)^{nq +\frac{(n-1)n}{2}} \left(\prod\limits_{a=1}^{p+q} t'_{a}\right)^{\frac{n}{2}}}{(2i\pi)^{n}n!}\lim\limits_{\underset{0 < r < 1}{r \to 1}} \sum\limits_{w \in \mathscr{W}} \sgn(w) \displaystyle\int_{\S^{1}} \ldots \displaystyle\int_{\S^{1}} \cfrac{\prod\limits_{b = 1}^n t^{p-n-1}_{b} \prod\limits_{a = 1}^{n} t^{a - \lambda_{a}}_{w(a)} \prod\limits_{1 \leq i < j \leq n}(t_{i} - t_{j})}{\prod\limits_{a=1}^p \prod\limits_{b=1}^n (t_{b} - rt'_{a}) \prod\limits_{a=p+1}^{p+q} \prod\limits_{b=1}^n \left(t_{b} - \frac{1}{r\overline{t'_{a}}}\right)} \prod\limits_{k = 1}^n dt_{k} \\
& = &K(t')\lim\limits_{\underset{0 < r < 1}{r \to 1}} \sum\limits_{w \in \mathscr{W}} \sum\limits_{\beta \in \mathscr{S}_{n}}\sgn(w) \sgn(\beta) \prod\limits_{b = 1}^n \displaystyle\int_{\S^{1}} \cfrac{t^{p-n-2 + w^{-1}(b) - \lambda_{w^{-1}(b)} + \beta^{-1}(b)}_{b} }{\prod\limits_{a=1}^p (t_{b} - rt'_{a}) \prod\limits_{a=p+1}^{p+q} \left(t_{b} - \frac{1}{r\overline{t'_{a}}}\right)} dt_{b}
\label{Formule caractere cartan compact}
\end{eqnarray*}
where $K(t') = \cfrac{(-1)^{nq + \frac{(n-1)n}{2}}\left( \prod\limits_{a=1}^{p+q} t'_{a}\right)^{\frac{n}{2}}}{(2i\pi)^{n}n!}$.
\label{PropositionU(n,C)}
\end{prop}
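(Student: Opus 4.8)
The plan is to combine the three ingredients assembled in the previous sections: the integral formula for $\Theta_{\Pi'}$ from Lemma \ref{FormuleIntegrale1908}, the explicit Weyl character formula for $\Theta_\Pi$ coming from the weights \eqref{Weights} of Kashiwara--Vergne, and the explicit expression \eqref{Caractere sur le tore} for the restriction of $\Theta$ to $\widetilde{\T}.\widetilde{\T'}^{++}$. First I would specialize Lemma \ref{FormuleIntegrale1908} to $\G = \U(n,\mathbb{C})$: the Haar measure on $\T = (\S^1)^n$ is $\frac{1}{(2i\pi)^n n!}$ times the product of the $dt_k/t_k$ (the $n!$ accounting for the Weyl group $\mathscr{W} = \mathscr{S}_n$ after unfolding), and the Weyl denominator is $|\Delta(t)|^2 = \prod_{i<j}|t_i - t_j|^2$. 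The approach to the path $\widetilde{p}\to 1$ inside $\widetilde{\G'}^{++}$ is to move along $\widetilde{t}' \mapsto r\widetilde{t}'$ with $0<r<1$, i.e. $\widetilde{p}$ corresponds to scaling the diagonal torus toward the boundary; by part (1) of Proposition \ref{Proposition19081990} this keeps us in $\T'^{++}$ and sends us to $1$ as $r\to 1^-$, so the limit in \eqref{FormuleIntegrale1908} becomes the stated $\lim_{r\to 1^-}$.

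Next I would substitute. For $\overline{\Theta_\Pi(\widetilde{t})}$ I would plug the Weyl character formula \eqref{WeylCharacter} with highest weight $\lambda$ of the form \eqref{Weights}; writing $\lambda + \rho = \sum_a(\frac{q-p}{2} + \lambda_a + \frac{n-2a+1}{2})e_a$ and using that the Weyl denominator of $\U(n)$ is $\prod_{i<j}(e^{(e_i-e_j)/2}-e^{-(e_i-e_j)/2})$, the character is a ratio whose numerator is $\sum_{w\in\mathscr{W}}\sgn(w)\prod_a t_{w(a)}^{\frac{q-p}{2}+\lambda_a + \frac{n-2a+1}{2}}$; after combining with one copy of the Weyl denominator from $|\Delta(t)|^2$ this collapses to $\sum_{w}\sgn(w)\prod_a t_{w(a)}^{(q-p)/2 + \lambda_a}$ up to an overall monomial, and the conjugation replaces $\lambda_a$-exponents by $-\lambda_a$ and $(q-p)/2$ by $(p-q)/2$ and $t$ by $t^{-1}$. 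For $\Theta(\widetilde{t}\widetilde{t}'\widetilde{p})$ I would use \eqref{Caractere sur le tore} with $t' \rightsquigarrow rt'$ (so the factors become $t_b - rt'_a$ for $a\le p$ and $t_b - \frac{1}{r\overline{t'_a}}$ for $a>p$, using $1/\overline{t'_a}$ since $|t'_a|>1$ there). Collecting the powers of $t_b$: from $\overline\Theta_\Pi$ one gets $t_b^{-(q-p)/2}$ per $b$ and the $\lambda$-exponents; from \eqref{Caractere sur le tore} one gets $t_b^{p}$, the $(q-p)/2$ from $\prod t_b^{(q-p)/2}$, and the factor $(\prod_a rt'_a)^{m/2}$ with $m=n$; from the second Weyl-denominator factor $\prod_{i<j}(\bar t_i - \bar t_j) = \prod_{i<j}(t_i^{-1}-t_j^{-1})$ on $\S^1$ one gets $\prod_b t_b^{-(n-1)}\cdot(-1)^{?}\prod_{i<j}(t_i-t_j)$. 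Carefully bookkeeping these exponents, the sign $(-1)^{mq}$, and the $(-1)^{(n-1)n/2}$ from re-expressing the Vandermonde, should produce exactly the exponent $p-n-1$ on $\prod_b t_b$, the exponents $a-\lambda_a$ on $t_{w(a)}$, the single Vandermonde $\prod_{i<j}(t_i-t_j)$, and the prefactor $K(t')$ in the first displayed equality; the $r\to 1$ limit is justified because for $0<r<1$ the integrand on $(\S^1)^n$ is bounded uniformly (poles stay off the torus), so dominated convergence applies.

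Finally, for the second equality I would expand the remaining Vandermonde $\prod_{i<j}(t_i-t_j) = \sum_{\beta\in\mathscr{S}_n}\sgn(\beta)\prod_b t_b^{\beta^{-1}(b)-1}$ (or an equivalent indexing convention), reindex the $w$-sum by setting $b = w(a)$ so that $a = w^{-1}(b)$ and $t_{w(a)}^{a-\lambda_a} = t_b^{w^{-1}(b)-\lambda_{w^{-1}(b)}}$, and observe that the full $n$-dimensional integral factors as a product of one-dimensional integrals over $\S^1$ because the only coupling between the variables was through the Vandermonde, which has now been separated. Combining the exponents $p-n-1$, $w^{-1}(b)-\lambda_{w^{-1}(b)}$, and $\beta^{-1}(b)-1$ gives the total exponent $p-n-2 + w^{-1}(b) - \lambda_{w^{-1}(b)} + \beta^{-1}(b)$ on $t_b$, yielding the product form. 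The main obstacle I anticipate is the sign and normalization bookkeeping: correctly tracking the half-integer powers (the $t_b^{(q\mp p)/2}$ cancellation and the $(\prod t'_a)^{n/2}$ branch), the power of $(-1)$ coming from $(-1)^{mq}$ in \eqref{Caractere sur le tore} together with the $(-1)^{\binom{n}{2}}$ from rewriting $\prod_{i<j}(\bar t_i - \bar t_j)$ in terms of $\prod_{i<j}(t_i-t_j)$ after pulling out $\prod_b t_b^{-(n-1)}$, and making sure the Weyl-group signs $\sgn(w)$ survive the reindexing $b=w(a)$ intact — these are exactly the places where a stray sign would break the stated formula, so I would verify them on the small cases $n=1,2$ as a consistency check before asserting the general identity.
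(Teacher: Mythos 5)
Your proposal follows essentially the same route as the paper: specialize the integral formula \eqref{FormuleIntegrale1908} to the torus of $\U(n,\mathbb{C})$ with the normalized Haar measure $d\mu_{\S^1}(z)=\frac{dz}{2i\pi z}$, substitute the Weyl character formula \eqref{WeylCharacter} with the Kashiwara--Vergne weights \eqref{Weights} and the expression \eqref{Caractere sur le tore} for $\Theta$ on $\widetilde{\T}.\T'^{++}$ along the radial path $r\to 1^-$, then use the Weyl denominator \eqref{WeylDenominator} and the Vandermonde expansion \eqref{Vandermonde Determinant Formula} to split the integral into one-dimensional factors. The paper likewise reduces the statement to this substitution and calls the remaining exponent and sign bookkeeping a straightforward computation, so your outline (including the flagged sign checks) matches its proof in both strategy and level of detail.
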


\begin{proof}
Using Equation \eqref{FormuleIntegrale1908}, we get that
\begin{equation*}
\Theta_{\Pi'}(\widetilde{g}') = \lim\limits_{\underset{\widetilde{p} \in \widetilde{\G}'^{++}_{\mathbb{C}}}{\widetilde{p} \to 1}} \displaystyle\int_{\T} \H_{\widetilde{p}, \widetilde{g}'}(t) |\Delta(t)|^{2} d\mu_{\T}(t).
\end{equation*}
The torus $\T$ of $\U(n, \mathbb{C})$ is isomorphic to ${\S^{1}}^{\otimes n}$. Under this identification, we get:
\begin{equation*}
d\mu_{\T} = \bigotimes_{i=1}^{n} d\mu_{\S^{1}},
\end{equation*}
and $d\mu_{\S^{1}}(z) = \frac{dz}{2i\pi z}$.
Moreover, 
\begin{equation}
D(\widetilde{t} = \widetilde{\exp(x)}) = \prod\limits_{\alpha > 0} (e^{\frac{\alpha}{2}(x)} - e^{-\frac{\alpha}{2}(x)}) = \prod\limits_{1 \leq i < j \leq m} (t^{\frac{1}{2}}_{i}t^{-\frac{1}{2}}_{j} - t^{-\frac{1}{2}}_{i}t^{\frac{1}{2}}_{j}) = \prod\limits_{i=1}^m t^{-\frac{m-1}{2}}_{i} \prod\limits_{1 \leq i < j \leq m} (t_{i} - t_{j}).
\label{WeylDenominator}
\end{equation}
and by using the Vandermonde's determinant formula, we get:
\begin{equation}
\prod\limits_{1 \leq i < j \leq n} (t_{j} - t_{i}) = \sum\limits_{\beta \in \mathscr{S}_{n}} \sgn(\beta) \prod\limits_{i=1}^{n} t^{i-1}_{\beta_{i}}.
\label{Vandermonde Determinant Formula}
\end{equation}

\noindent The rest of the proof is a straightforward computation using Equation \eqref{WeylCharacter}, Proposition \ref{Proposition19081990} and Equation \eqref{Weights}.

\end{proof}

\noindent We now give a technical lemma concerning the integrals which appears in the previous proposition (the proof is obvious using residue theorem).

\begin{lemme}

Let $a_{1}, \ldots, a_{p}$ be $p$-complex numbers such that $|a_{i}| < 1$ for all $i \in [|1, p|]$. Similarly, we consider $a_{p+1}, \ldots, a_{p+q} \in \mathbb{C}$ such that $|a_{i}| > 1$ for all $i \in [|p+1, p+q|]$. Moreover, we assume that $a_{i} \neq a_{j}, i \neq j$. Then, we get:
\begin{equation*}
\frac{1}{2i\pi}\displaystyle\int_{\S^{1}} \cfrac{t^{k}}{\prod\limits_{i=1}^{p+q} (t - a_{i})} dt = \begin{cases}  \sum\limits_{h=1}^p \frac{a^{k}_{h}}{\prod\limits_{j \neq h} (a_{h}-a_{j})}  & \text{ if } k \geqslant 0 \\  - \sum\limits_{h=p+1}^{p+q} \frac{a^{k}_{h}}{\prod\limits_{j \neq h} (a_{h}-a_{j})} & \text { otherwise } \end{cases}
\end{equation*}
\label{Integrale Formule Lemme}
\end{lemme}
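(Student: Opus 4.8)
The plan is to evaluate the contour integral
$$\frac{1}{2i\pi}\int_{\S^{1}} \frac{t^{k}}{\prod_{i=1}^{p+q}(t-a_{i})}\,dt$$
by the residue theorem, treating the two cases $k\geqslant 0$ and $k<0$ separately according to where the integrand has poles inside the unit disc. First I would observe that the integrand $f(t) = t^{k}/\prod_{i=1}^{p+q}(t-a_{i})$ is meromorphic on a neighbourhood of the closed unit disc minus possibly the origin, with simple poles exactly at those $a_{i}$ with $|a_{i}|<1$, i.e.\ at $a_{1},\dots,a_{p}$ (the hypotheses $|a_{i}|<1$ for $i\leqslant p$, $|a_{i}|>1$ for $i>p$, and $a_{i}\neq a_{j}$ guarantee this), and — when $k<0$ — an additional pole at $t=0$.

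For the case $k\geqslant 0$: the function $t^{k}$ is holomorphic everywhere, so the only poles inside $\S^{1}$ are the $a_{h}$ with $1\leqslant h\leqslant p$. The residue of $f$ at $t=a_{h}$ is $a_{h}^{k}/\prod_{j\neq h}(a_{h}-a_{j})$ since the pole is simple, and the residue theorem gives the integral as the sum of these residues, which is exactly the first branch of the claimed formula. For the case $k<0$, I would instead use the substitution $t\mapsto 1/t$ (equivalently, compute the residue at infinity together with the pole at $0$): the $a_{h}$ with $h>p$ satisfy $|1/a_{h}|<1$, and after the change of variables the poles inside the unit circle become those coming from $a_{p+1},\dots,a_{p+q}$, with an extra sign and Jacobian bookkeeping producing the minus sign and the residues $-a_{h}^{k}/\prod_{j\neq h}(a_{h}-a_{j})$ for $h=p+1,\dots,p+q$. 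Alternatively one can argue directly that the sum of \emph{all} residues of $f$ on $\mathbb{C}\cup\{\infty\}$ is zero, so the sum over $|a_{h}|<1$ equals minus the sum over $|a_{h}|>1$ minus the residue at $\infty$; checking that the residue at infinity vanishes when $k<0$ (indeed when $k \leqslant p+q-2$, which holds in all applications) closes this case.

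The main obstacle — though it is minor — is the bookkeeping in the case $k<0$: one must be careful that the pole at the origin does not contribute spuriously and that the sign from the orientation reversal in $t\mapsto 1/t$ is tracked correctly, so that the final answer has precisely the stated minus sign and no leftover term from $t=0$. Since the problem statement already flags the proof as ``obvious using residue theorem,'' I would keep the write-up short: state the pole configuration, invoke the residue theorem in the first case, and handle the second case by the inversion $t\mapsto 1/t$, remarking that the residue at infinity vanishes for the range of $k$ relevant to Proposition \ref{PropositionU(n,C)}.
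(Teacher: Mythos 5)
Your proposal is correct and matches the paper's approach: the paper simply declares the lemma ``obvious using residue theorem,'' and your write-up is exactly that argument carried out, with the $k\geqslant 0$ case handled by summing the residues at $a_{1},\dots,a_{p}$ inside the unit circle and the $k<0$ case by the inversion $t\mapsto 1/t$ (or equivalently the vanishing of the residue at infinity, which indeed holds since $k-(p+q)\leqslant -2$). Both of your bookkeeping routes for $k<0$ check out and yield the stated minus sign with no spurious contribution from the pole at the origin.
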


\noindent Let's now fix $n = 1$. In this case, the weights $\lambda$ of the representations $\Pi$ are of the form $\lambda = ke_{1}$ if $k$ is positive and $\lambda = ke_{2}$ otherwise. The reason why we voluntarily change the notations here is because the set of irreducible genuine representations of $\widetilde{\U(1, \mathbb{C})}$ is isomorphic to the unitary dual of $\U(1, \mathbb{C})$, which is isomorphic to $\mathbb{Z}$ via the isomorphism:
\begin{equation*}
\mathbb{Z} \ni k \to (x \to e^{2i\pi kx}) \in \widehat{\widetilde{\U(1, \mathbb{C})}}.
\end{equation*}

\noindent Using Proposition \ref{PropositionU(n,C)} and Lemma \ref{Integrale Formule Lemme}, we get the following proposition.

\begin{prop}

The character $\Theta_{\Pi'_{k}}$ of the representation $\Pi'_{k}$ of $\U(p, q, \mathbb{C})$ is given, for every $\widetilde{t}' \in \widetilde{\T'}$, by:
\begin{equation}
\Theta_{\Pi'_{k}}(\widetilde{t}') = \begin{cases} \prod\limits_{i=1}^{p+q} t^{\frac{1}{2}}_{i} \sum\limits_{h=1}^{p} \cfrac{t^{p-(k+1)}_{h}}{\prod\limits_{h \neq j} (t_{h} - t_{j})} \quad & \text { if } k \leqslant 0 \\
- \prod\limits_{i=1}^{p+q} t^{\frac{1}{2}}_{i} \sum\limits_{h = p+1}^{p+q} \cfrac{t^{p-(k+1)}_{h}}{\prod\limits_{h \neq j} (t_{h} - t_{j})} \quad & \text{ otherwise }
\end{cases}
\label{U(1,1,C)}
\end{equation}
\label{CharacterU(1,C)}
\end{prop}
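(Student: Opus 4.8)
The plan is to specialise Proposition \ref{PropositionU(n,C)} to $n=1$ and evaluate the resulting single contour integral with the residue computation of Lemma \ref{Integrale Formule Lemme}. First I would note that for $n=1$ the Weyl group $\mathscr{W}$ and the symmetric group $\mathscr{S}_1$ are both trivial, so all the sums over $w$ and $\beta$ disappear and the product over $b$ collapses to a single factor; moreover $\rho = 0$ and the Vandermonde factor $\prod_{1\leq i<j\leq n}(t_i-t_j)$ is empty. Writing $\lambda_1 = k$ when $k$ is the parameter of $\Pi_k$ (recalling the weight is $\lambda = ke_1$, up to the $\tfrac{q-p}{2}$ shift already accounted for in the general formula), the exponent $p - n - 2 + w^{-1}(b) - \lambda_{w^{-1}(b)} + \beta^{-1}(b)$ from Proposition \ref{PropositionU(n,C)} becomes, with $n=1$ and $b=1$, simply $p - 1 - 1 - k + 1 = p - k - 2$. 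Hence the formula reduces to
\begin{equation*}
\Theta_{\Pi'_k}(\widetilde{t}') = K(t')\,\lim_{\substack{0<r<1\\ r\to 1}} \int_{\S^1} \cfrac{t^{\,p-k-2}}{\prod_{a=1}^p (t - rt'_a)\prod_{a=p+1}^{p+q}\left(t - \frac{1}{r\overline{t'_a}}\right)}\, dt,
\end{equation*}
with $K(t') = \dfrac{(-1)^{q}\left(\prod_{a=1}^{p+q} t'_a\right)^{1/2}}{2i\pi}$ after setting $n=1$ in the definition of $K$.

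Next I would set up the application of Lemma \ref{Integrale Formule Lemme}. The denominator here is a product of $p+q$ linear factors in $t$; for $0<r<1$ and $\widetilde{t}'\in\widetilde{\T}'$ regular, the first $p$ poles $rt'_a$ lie inside $\S^1$ (since $|t'_a|\leq 1$, and strictly inside after the $r$-scaling when $|t'_a|=1$; this is exactly why the regularising parameter $r$ is introduced) and the last $q$ poles $\tfrac{1}{r\overline{t'_a}}$ lie outside $\S^1$ (since $|t'_a|\leq 1$ forces $\tfrac{1}{r|t'_a|}>1$). So Lemma \ref{Integrale Formule Lemme} applies with $a_h = rt'_h$ for $h\leq p$, $a_h = \tfrac{1}{r\overline{t'_h}}$ for $h>p$, and exponent $k' = p-k-2$. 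When $k\leq p-1$ — and in particular in the stated regime $k\leq 0$ — we have $k' = p-k-2 \geq -1$; the cleanest bookkeeping is to observe that the contour integral of $t^{k'}/\prod(t-a_i)$ over $\S^1$ actually sees only the residue structure, and for $k'\geq 0$ the lemma gives the sum over the inner poles. One then takes $r\to 1$: the inner poles converge to $t'_h$ ($h\leq p$), the factor $\prod_{j\neq h}(a_h - a_j)$ converges to $\prod_{h\neq j}(t_h - t_j)$ (here I am using the paper's convention of writing $t_h$ for the eigenvalues $t'_h$ of $\widetilde{t}'$ inside this final display), and multiplying by $K(t')$ and tracking the sign $(-1)^q$ against $\prod_{i} t_i^{1/2}$ yields the first branch of \eqref{U(1,1,C)}. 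For the complementary case one instead uses the second branch of Lemma \ref{Integrale Formule Lemme}, producing the sum over $h=p+1,\dots,p+q$ with the extra minus sign, which gives the second branch of \eqref{U(1,1,C)}.

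The main obstacle I anticipate is not the residue calculation itself but the careful reconciliation of the index ranges and the exponent of $t_h$ in the numerator: the general Proposition \ref{PropositionU(n,C)} is stated with the dimension variable written sometimes as $n$ and sometimes as $m$, and the weight parameter enters as $\lambda_a$ with the $\tfrac{q-p}{2}$ shift already absorbed, so I must check that specialising to $n=m=1$ and $\lambda_1 = k$ really produces the exponent $p-(k+1)$ appearing in \eqref{U(1,1,C)} rather than an off-by-one variant — the bookkeeping of the $-n-1$ from the Weyl denominator $\prod_i t_i^{-(m-1)/2}$ versus the $t_b^{p}$ from Proposition \ref{Proposition19081990} must be done explicitly. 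A secondary subtlety is justifying the interchange of $\lim_{r\to1}$ with the (now trivial, one-dimensional) integral and with the residue evaluation; since after applying Lemma \ref{Integrale Formule Lemme} the integral has been replaced by a finite sum of rational functions of $r$ that are continuous at $r=1$ for $\widetilde{t}'$ regular, this limit is elementary, but it should be remarked that regularity of $\widetilde{t}'$ is precisely what guarantees the poles stay distinct and off the unit circle in the limit. Finally I would double-check the sign: the lemma's "otherwise" branch carries a global minus, and combined with the $(-1)^q$ inside $K(t')$ and the $(-1)^{q}$ that the paper's Proposition \ref{CharacterU(1,C)} attaches (note $(-1)^{nq}=(-1)^q$ for $n=1$ and $(-1)^{(n-1)n/2}=1$), one confirms the overall sign in each branch of \eqref{U(1,1,C)} matches.
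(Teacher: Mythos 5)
Your overall route --- setting $n=1$ in Proposition \ref{PropositionU(n,C)}, evaluating the single contour integral via Lemma \ref{Integrale Formule Lemme}, and letting $r\to 1$ --- is exactly the argument the paper intends (the paper itself gives no further detail). However, your exponent arithmetic is wrong, and as written your proof derives a formula that does not match the statement. The exponent in Proposition \ref{PropositionU(n,C)} is $p-n-2+w^{-1}(b)-\lambda_{w^{-1}(b)}+\beta^{-1}(b)$; with $n=1$, $b=1$, $w=\beta=\mathrm{id}$ and $\lambda_{1}=k$ this equals $p-1-2+1-k+1=p-(k+1)$, not $p-k-2$ (indeed your own intermediate expression $p-1-1-k+1$ evaluates to $p-1-k$, not $p-k-2$). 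So the ``off-by-one variant'' you flagged as the main obstacle does not occur: the specialisation produces precisely the exponent $p-(k+1)$ of \eqref{U(1,1,C)}, and Lemma \ref{Integrale Formule Lemme}'s first branch then applies exactly when $p-(k+1)\geqslant 0$, i.e.\ $k\leqslant p-1$ (the threshold as stated in the introduction; in particular it covers the regime $k\leqslant 0$ of the proposition), the second branch giving the complementary case. Since you carried $k'=p-k-2$ into the application of the lemma and into the final identification of the residue sum, this must be corrected for the proof to close; once it is, no hand-waving about ``$k'\geqslant -1$'' is needed.

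The remaining points are handled correctly: for $\widetilde{t}'$ regular the poles $rt'_a$ ($a\leqslant p$) lie inside and $1/(r\overline{t'_a})$ ($a>p$) outside the unit circle, after the residue evaluation the limit $r\to 1$ is a limit of rational functions continuous at $r=1$, and since $|t'_a|=1$ one has $1/\overline{t'_a}=t'_a$, so the denominators indeed become $\prod_{j\neq h}(t_h-t_j)$. Your sign discussion, though, is muddled: Proposition \ref{CharacterU(1,C)} carries no factor $(-1)^{q}$, so you cannot ``combine'' the $(-1)^{nq}$ inside $K(t')$ with a sign attached to the statement --- that factor has to be tracked against the choice of the genuine square root $\prod_i t_i^{\frac12}$ (equivalently against the sign conventions entering Proposition \ref{Proposition19081990}); as written, this is the one bookkeeping item in your argument that is asserted rather than checked.
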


\begin{nota}

The Weyl group $\mathscr{W}$ (resp. $\mathscr{W}(\mathfrak{k})$) of $\G'$ (resp. $\K'$) is isomorphic to $\mathscr{S}_{p+q}$ (resp. $\mathscr{S}_{p} \times \mathscr{S}_{p}$). For every element $h \in \{1, \ldots, p+q\}$, we denote by $\mathscr{W}^{h}$ the stabilizer of $h$, i.e.
\begin{equation*}
\mathscr{W}^{h} = \left\{\sigma \in \mathscr{W}, \sigma(h) = h\right\}.
\end{equation*}
We define similarly $\mathscr{W}(\mathfrak{k})^{h}$.
\end{nota}

\begin{prop}

We get, up to a constant, the following result:
\begin{equation*}
D(X)\Theta_{\Pi'}(\exp(X)) = \begin{cases}  \sum\limits_{\mu \in A^{p+1}}\sgn(\mu) \sum\limits_{\omega \in \mathscr{S}_{p} \times \mathscr{S}_{q}} \sgn(\omega) e^{i\omega(\mu \lambda_{1} + \xi)(x)} & \text{ if } k > p-1 \\
 \sum\limits_{\mu \in A^{1}}\sgn(\mu) \sum\limits_{\omega \in \mathscr{S}_{p} \times \mathscr{S}_{q}} \sgn(w) e^{i\omega(\mu \lambda_{2} + \xi)(x)} & \text{ otherwise }
\end{cases}
\end{equation*}
where 
\begin{itemize}
\item $\lambda_{1}  = \sum\limits_{i=1}^{p} (i-1)e_{i} + (p-(k+1))e_{p+1} + \sum\limits_{i = p+2}^{p+q} (i-2)e_{i}$, 
\item $\lambda_{2} = (p-(k+1))e_{1} + \sum\limits_{a=2}^{p+q} (a-2)e_{a}$, 
\item $A^{1}$ (resp. $\A^{p+1}$) is a system of representatives of $\mathscr{W}^{1}/\mathscr{W}(\mathfrak{k})^{1}$ (resp. $\mathscr{W}^{p+1}/\mathscr{W}(\mathfrak{k})^{p+1}$)
\item $\xi = \sum\limits_{k=1}^{p+q} \frac{p+q-2}{2} e_{k}$.
\end{itemize}

\label{CorollaryFourier}

\end{prop}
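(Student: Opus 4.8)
The plan is to feed the closed formula of Proposition~\ref{CharacterU(1,C)} into a computation on the compact Cartan subgroup $\T'$ of $\G'=\U(p,q,\mathbb{C})$, multiply by the Weyl denominator, and then convert the resulting rational expression into a $\mathscr{W}(\mathfrak{k})$-alternating sum of exponentials by a Vandermonde manipulation. Put $N=p+q$ and parametrise $\widetilde{t}'\in\widetilde{\T}'$ by $t_1,\dots,t_N$ on the unit circle, $t_i=e^{ix_i}$. For the positive system $\Phi^{+}=\{e_i-e_j\thinspace;\thinspace i<j\}$ the Weyl denominator is, as in Equation~\eqref{WeylDenominator},
\begin{equation*}
D(X)=\prod_{i<j}\bigl(t_i^{1/2}t_j^{-1/2}-t_i^{-1/2}t_j^{1/2}\bigr)=\prod_{i=1}^{N}t_i^{-\frac{N-1}{2}}\prod_{i<j}(t_i-t_j).
\end{equation*}
Inserting this together with Proposition~\ref{CharacterU(1,C)} (take first the branch $k\leqslant p-1$; the other branch is the mirror image with the blocks $\U(p,\mathbb{C})$ and $\U(q,\mathbb{C})$ exchanged and an extra overall sign), the half-powers $\prod_i t_i^{1/2}$ from $\Theta_{\Pi'_k}$ and $\prod_i t_i^{-(N-1)/2}$ from $D(X)$ collapse into the single monomial $\prod_i t_i^{-(N-2)/2}$; this monomial is exactly the $\xi$-shift appearing in the statement, and, together with the orientation signs encountered below and the sign of the mirror branch, it is what the phrase ``up to a constant'' absorbs.

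The engine of the argument is the elementary identity $\prod_{i<j}(t_i-t_j)=(-1)^{h-1}\bigl(\prod_{j\ne h}(t_h-t_j)\bigr)V_h$, where $V_h=\prod_{i<j,\,i,j\ne h}(t_i-t_j)$ is the Vandermonde determinant in the remaining $N-1$ variables. It cancels the denominators $\prod_{j\ne h}(t_h-t_j)$ that occur in Proposition~\ref{CharacterU(1,C)}, so that $D(X)\Theta_{\Pi'_k}(\exp X)$ becomes, up to the constant above, $\sum_{h=1}^{p}(-1)^{h-1}t_h^{\,p-(k+1)}V_h$. I then expand each summand as a Weyl-type alternant: $t_h^{\,p-(k+1)}V_h=\sum_{\sigma\in\mathscr{W}^{h}}\sgn(\sigma)\,e^{i\sigma(\nu^{(h)})(x)}$, where $\mathscr{W}^{h}$ is the $\mathscr{S}_N$-stabiliser of $h$ and $\nu^{(h)}$ is the weight carrying the exponent $p-(k+1)$ in the $h$-th slot and $0,1,\dots,N-2$ in the remaining slots in increasing order. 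One checks at once that $\nu^{(1)}=\lambda_2$ (and, in the mirror branch, $\nu^{(p+1)}=\lambda_1$), and that $\nu^{(h)}=\tau_h(\lambda_2)$ for the $h$-cycle $\tau_h=(h\,\,h{-}1\,\cdots\,1)$, whose signature is precisely $(-1)^{h-1}$; this absorbs the sign $(-1)^{h-1}$ and rewrites $\sum_{\sigma\in\mathscr{W}^{h}}$ as a sum over the coset $\tau_h\mathscr{W}^{1}$.

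After this reindexing $D(X)\Theta_{\Pi'_k}(\exp X)$ equals, up to a constant, $\sum_{w\in U}\sgn(w)\,e^{iw(\lambda_2)(x)}$ with $U=\bigsqcup_{h=1}^{p}\tau_h\mathscr{W}^{1}=\{w\in\mathscr{S}_N\thinspace;\thinspace w(1)\leqslant p\}$. I finish with the purely combinatorial fact $U=\mathscr{W}(\mathfrak{k})\cdot\mathscr{W}^{1}$ (the inclusion $\supseteq$ is immediate since $\mathscr{S}_p$ preserves $\{1,\dots,p\}$, and $\subseteq$ follows by pre-composing a given $w$ with an element of $\mathscr{S}_p$ carrying $w(1)$ to $1$), combined with $\mathscr{W}(\mathfrak{k})\cap\mathscr{W}^{1}=\mathscr{W}(\mathfrak{k})^{1}$, which yields the disjoint union $U=\bigsqcup_{\mu\in A^{1}}\mathscr{W}(\mathfrak{k})\mu$. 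Splitting the sum along this decomposition and using $\sgn(\omega\mu)=\sgn(\omega)\sgn(\mu)$ and $\omega(\xi)=\xi$ for $\omega\in\mathscr{W}(\mathfrak{k})$ produces $\sum_{\mu\in A^{1}}\sgn(\mu)\sum_{\omega\in\mathscr{W}(\mathfrak{k})}\sgn(\omega)\,e^{i\omega(\mu\lambda_2+\xi)(x)}$, as claimed; the branch $k>p-1$ is handled verbatim with $(1,\lambda_2,A^{1})$ replaced by $(p+1,\lambda_1,A^{p+1})$.

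The step I expect to be delicate is the sign and index bookkeeping of the last two paragraphs: getting the orientation of $V_h$ and the signature of the cycle $\tau_h$ to cancel correctly, and, above all, matching the union of the $\mathscr{W}^{h}$-orbits with a clean disjoint union of $\mathscr{W}(\mathfrak{k})$-cosets in $\mathscr{S}_N$ --- including the harmless left-versus-right coset ambiguity concealed in the notation $\mathscr{W}^{1}/\mathscr{W}(\mathfrak{k})^{1}$ --- together with pinning down the single global unit that ``up to a constant'' really denotes.
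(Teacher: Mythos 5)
Your argument is correct and takes essentially the same route as the paper: cancel the denominators against the full Vandermonde to pick up the sign $(-1)^{h-1}$, expand the reduced Vandermonde $V_h$ as an alternating sum over the stabiliser $\mathscr{W}^{h}$, and regroup the resulting double sum into $\mathscr{W}(\mathfrak{k})$-cosets indexed by $A^{1}$ (resp.\ $A^{p+1}$). If anything, you make explicit the final combinatorial regrouping (the identification $\nu^{(h)}=\tau_h(\lambda_2)$, the identification of $\bigsqcup_h\tau_h\mathscr{W}^1$ with $\mathscr{W}(\mathfrak{k})\cdot\mathscr{W}^1$, and the left-versus-right transversal point) which the paper only asserts, and your treatment of the residual signs and of the $\xi$-shift is at the same ``up to a constant'' level of precision as the paper's own proof.
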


\begin{proof}

We assume first that $k > p-1$. According to Equation \eqref{WeylDenominator}, we have:
\begin{equation*}
\prod\limits_{\alpha \in \Phi^{+}(\mathfrak{g}_{\mathbb{C}}, \mathfrak{t}_{\mathbb{C}})}(e^{\frac{\alpha(x)}{2}} - e^{-\frac{\alpha(x)}{2}}) = \prod\limits_{i=1}^m t^{-\frac{p+q-1}{2}}_{i} \prod\limits_{1 \leq i < j \leq p+q} (t_{i} - t_{j}).
\end{equation*}
For all $h \in \{1, \ldots, p+q\}$, we get:
\begin{equation*}
\cfrac{\prod\limits_{1 \leq i < j \leq p+q} (t_{i} - t_{j})}{\prod\limits_{k \neq h} (t_{h} - t_{k})} = (-1)^{h-1} \prod\limits_{\underset{i, j \neq h}{1 \leq i < j \leq p+q}} (t_{i} - t_{j})
\end{equation*}
and then
\begin{equation*}
\prod\limits_{\alpha \in \Phi^{+}(\mathfrak{g}_{\mathbb{C}}, \mathfrak{t}_{\mathbb{C}})}(e^{\frac{\alpha(x)}{2}} - e^{-\frac{\alpha(x)}{2}}) \Theta_{\Pi'_{k}}(\widetilde{\exp(x)}) = \prod\limits_{i=1}^{p+q}t^{-\frac{p+q-2}{2}}_{i} \sum\limits_{h=p+1}^{p+q}  (-1)^{h+1} t^{p-(k+1)}_{h} \prod\limits_{\underset{i \neq h, j \neq h}{1 \leqslant i < j \leqslant p+q}} (t_{i} - t_{j}).
\end{equation*}
For all $h \in \{1, \ldots, p+q\}$, we denote by $\tilde{t_{k}}, 1 \leq k \leq p+q-1$ the following elements
\begin{equation*}
\tilde{t}_{k} = \begin{cases}  t_{k} & \text{ if }  k < h \\ t_{k+1} & \text{ otherwise } \end{cases}.
\end{equation*}
Then, up to a $\pm 1$, we get:
\begin{eqnarray*}
\prod\limits_{\underset{i \neq h, j \neq h}{1 \leqslant i < j \leqslant p+q}} (t_{i} - t_{j}) & = & \prod\limits_{1 \leqslant i < j \leqslant p+q-1} (\tilde{t}_{i} - \tilde{t}_{j}) = \sum\limits_{\sigma \in \mathscr{S}_{p+q-1}} \sgn(\sigma) \prod\limits_{a=1}^{p+q-1} \tilde{t}^{a-1}_{\sigma(a)} \\ 
                                               & = & \sum\limits_{\sigma \in \mathscr{S}^{h}_{p+q}} \sgn(\sigma) \prod\limits_{a=1}^{h-1} t^{a-1}_{\sigma(a)} \prod\limits_{a=h+1}^{p+q} t^{a-2}_{\sigma(a)}.
\end{eqnarray*}
Finally, we prove that:
\begin{equation*}
\sum\limits_{h=p+1}^{p+q} \sum\limits_{\sigma \in \mathscr{S}^h_{p+q}} \sgn(\sigma) (-1)^{h+1} t^{p-(k+1)}_{h} \prod\limits_{a=1}^{h-1} t_{\sigma(a)}^{a-1} \prod\limits_{a = h + 1}^{p+q} t_{\sigma(a)}^{a-2} = (-1)^{p} \sum\limits_{\mu \in A^{p+1}}\sgn(\mu) \sum\limits_{\omega \in \mathscr{S}_{p} \times \mathscr{S}_{q}} \sgn(\omega) e^{i\omega(\mu \lambda)(x)}
\end{equation*}
where $\lambda_{1}  = \sum\limits_{i=1}^{p} (i-1)e_{i} + (p-(k+1))e_{p+1} + \sum\limits_{i = p+2}^{p+q} (i-2)e_{i}$. The proof is similar if $k \leq p-1$.

\end{proof}

\begin{rappels}

We recall briefly some well-known facts from \cite{ROS} (see also \cite{VER1}) concerning the Fourier transform of co-adjoint orbits. To simplify the notations, we assume that $\G$ is a semi-simple connected Lie group such that $\rk(\K) = \rk(\G)$, where $\K$ is a maximal compact subgroup of $\G$. We denote by $\Ad^{*}$ the natural co-ajoint action of $\G$ on $\mathfrak{g}^{*}$. For every $\lambda \in \mathfrak{g}^{*}$, we denote by $\G_{\lambda}$ the $\G$-orbit associated to $\lambda$. On the space $\G_{\lambda}$, we have a natural measure $d\beta_{\lambda}$, usually called the Liouville measure on $\G_{\lambda}$ (see \cite[Section~7.5]{VER1}). 

\noindent The Fourier transform of $\G_{\lambda}$, denoted by $\F_{\G_{\lambda}}$, is the generalized function on $\mathfrak{g}$ defined by:
\begin{equation*}
\F_{\G_{\lambda}}(X) = \displaystyle\int_{\G_{\lambda}} e^{if(X)} d\beta_{\lambda}(f) \qquad (X \in \mathfrak{g}).
\end{equation*}

\noindent As proved in \cite[Page~217]{ROS}, if $\lambda \in {\mathfrak{t}^{*}}^{\reg}$, we have, up to a constant, the following quality:
\begin{equation*}
\left(\prod\limits_{\alpha \in \Phi^{+}(\mathfrak{g}_{\mathbb{C}}, \mathfrak{t}_{\mathbb{C}})} \alpha(X)\right) \F_{\G_{\lambda}}(X) = \sum\limits_{\omega \in \mathscr{W}(\mathfrak{k})} \varepsilon(\omega) e^{i \lambda(\omega(X))} \qquad (X \in \mathfrak{t}^{\reg}).
\end{equation*}

\noindent If the weight $\lambda$ is not regular, see \cite[Theorem~7.24]{VER}. To simplify the notations, we denote by $\pi(X)$ the quantity $\prod\limits_{\alpha \in \Phi^{+}(\mathfrak{g}_{\mathbb{C}}, \mathfrak{t}_{\mathbb{C}})} \alpha(X)$.

\end{rappels}

\begin{coro}

Using the notations of Proposition \ref{CorollaryFourier}, we get, up to a constant: 
\begin{equation*}
\pi(X)^{-1} D(X)\Theta_{\Pi'}(\exp(X)) = \begin{cases}  \sum\limits_{\mu \in A^{p+1}}\sgn(\mu) \F_{\G_{\mu(\lambda_{1} + \xi)}}(X) & \text{ if } p < k+1 \\
\sum\limits_{\mu \in A^{1}}\sgn(\mu) \F_{\G_{\mu(\lambda_{2} + \xi)}}(X) & \text{ if } k < -q+1
\end{cases}
\end{equation*}

\end{coro}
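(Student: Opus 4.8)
The plan is to deduce the statement directly from Proposition~\ref{CorollaryFourier} together with the Rosenberg-type formula recalled just above, by dividing through by $\pi(X)=\prod_{\alpha\in\Phi^{+}(\mathfrak{g}_{\mathbb{C}},\mathfrak{t}_{\mathbb{C}})}\alpha(X)$ and recognising, term by term, each inner alternating sum over $\mathscr{S}_{p}\times\mathscr{S}_{q}$ as the numerator of a Fourier transform of a \emph{regular} coadjoint orbit. The first observation is that for $\K'=\U(p,\mathbb{C})\times\U(q,\mathbb{C})$ the compact Weyl group $\mathscr{W}(\mathfrak{k})$ is exactly $\mathscr{S}_{p}\times\mathscr{S}_{q}$, so that the sums over $\mathscr{S}_{p}\times\mathscr{S}_{q}$ appearing in Proposition~\ref{CorollaryFourier} are sums over $\mathscr{W}(\mathfrak{k})$, and the recalled identity $\pi(X)\,\F_{\G_{\lambda}}(X)=\sum_{\omega\in\mathscr{W}(\mathfrak{k})}\varepsilon(\omega)\,e^{i\lambda(\omega X)}$ becomes available provided $\lambda$ is regular.

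The one genuine verification is precisely this regularity, and it is where the hypotheses $p<k+1$ and $k<-q+1$ -- strictly stronger than the alternative $k>p-1$ / $k\leqslant p-1$ of Proposition~\ref{CorollaryFourier} -- enter. Writing $\lambda_{1}+\xi=\sum_{i}c_{i}e_{i}$, the coefficients $c_{i}$ are, after subtracting the common shift $\tfrac{p+q-2}{2}$ (which is fixed by all of $\mathscr{S}_{p+q}$ since $\xi$ is a multiple of $\sum_{i}e_{i}$), the integers $0,1,\dots,p-1$ in positions $1,\dots,p$, the integer $p-(k+1)$ in position $p+1$, and $p,p+1,\dots,p+q-2$ in positions $p+2,\dots,p+q$; thus they are pairwise distinct exactly when $p-(k+1)\notin\{0,1,\dots,p+q-2\}$, which under $p<k+1$ holds because then $p-(k+1)<0$. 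Similarly the shifted coefficients of $\lambda_{2}+\xi$ are $p-(k+1)$ in position $1$ and $0,1,\dots,p+q-2$ elsewhere, so $\lambda_{2}+\xi$ is regular iff $p-(k+1)>p+q-2$, i.e.\ iff $k<-q+1$. Since each $\mu$ in $A^{p+1}$ (resp.\ in $A^{1}$) acts on the dual Cartan by a permutation of coordinates, $\mu(\lambda_{1}+\xi)$ (resp.\ $\mu(\lambda_{2}+\xi)$) stays regular, so the regular form of the formula -- rather than its non-regular analogue \cite[Theorem~7.24]{VER} -- applies to every term.

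It then remains to apply the recalled formula to each regular weight $\mu(\lambda_{j}+\xi)$, reindex the $\mathscr{W}(\mathfrak{k})$-sum by $\omega\mapsto\omega^{-1}$ (harmless, as $\sgn(\omega^{-1})=\sgn(\omega)$ and $\xi$ is $\mathscr{S}_{p+q}$-invariant), multiply by $\sgn(\mu)$, sum over $\mu$, and compare with Proposition~\ref{CorollaryFourier}; dividing both sides by $\pi(X)$ gives the asserted equality up to a constant. That $\U(p,q,\mathbb{C})$ is only reductive, while the recalled statement is phrased for a semisimple group, is not an obstruction: the centre acts by one and the same scalar on both sides of the Rosenberg identity, so the formula extends verbatim. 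I expect the regularity bookkeeping above to be the only step needing care; everything else is substitution and relabelling of indices.
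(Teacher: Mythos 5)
Your proposal is correct and follows exactly the route the paper intends: the corollary is meant to be read off from Proposition \ref{CorollaryFourier} together with the recalled Rossmann--Kirillov identity $\pi(X)\,\F_{\G_{\lambda}}(X)=\sum_{\omega\in\mathscr{W}(\mathfrak{k})}\varepsilon(\omega)e^{i\lambda(\omega X)}$, the only substantive point being the regularity of $\mu(\lambda_{1}+\xi)$ and $\mu(\lambda_{2}+\xi)$, which is precisely why the ranges $p<k+1$ and $k<-q+1$ appear, and your bookkeeping of the shifted coordinates establishes this correctly (with $\mathscr{S}_{p+q}$-invariance of $\xi$ and the reindexing $\omega\mapsto\omega^{-1}$ handling the cosmetic differences). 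The only nitpick is that $p<k+1$ is in fact identical to the proposition's condition $k>p-1$, so only the second case is genuinely restricted; this does not affect your argument.
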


\begin{rema}
\begin{enumerate}
\item For the dual pair $(\G = \U(1, \mathbb{C}), \G' = \U(1, \mathbb{C}))$, using the Equation \eqref{U(1,1,C)}, we get:
\begin{equation*}
\Theta_{\Pi'_{k}}(\widetilde{t}') = t'^{-k-\frac{1}{2}} = \Theta_{\Pi_{k}}(t'^{-1}).
\end{equation*}
In particular, to be precise, with our method, we don't get $\Theta_{\Pi'}(\widetilde{t}')$ but $\Theta_{\Pi'}(\widetilde{t}'^{-1})$ (or $\Theta_{\Pi'}(\widetilde{t}')$ because the representation $\Pi'$ is unitary): in the embedding of $(\G, \G')$ in $\Sp(W)$, $g' \in \G'$ acts on $w \in W$ as $g'.w = wg'^{-1}$.  
\item The function $\mathfrak{t} \to X \to \pi(X)^{-1}D(X) \in \mathbb{C}$ is well-known in the literature, usually denoted by $p(x)$ (see \cite{KIR} or \cite{ROS}). More particularly, $p(X)$ can be defined as:
\begin{equation*}
p(X) = \det^{\frac{1}{2}} \left(\cfrac{\sinh(\ad(X/2))}{\ad(X/2)}\right).
\end{equation*}
\end{enumerate}
\end{rema}

\section{The case $\G' = \U(1, 1, \mathbb{C})$}

\label{SectionU(1,1)}

\noindent As recalled in Equation \eqref{UnitaireU(n)22}, the unitary group $\U(p, q, \mathbb{C})$ is defined by
\begin{equation*}
\U(p, q, \mathbb{C}) = \left\{A \in \GL(p+q, \mathbb{C}), A^{*}\Id_{p, q}A = \Id_{p, q}\right\}.
\end{equation*}

\noindent We fix the convention that $p \leq q$. In this case, there is, up to conjugaison, $q+1$ Cartan subgroups in $\U(p, q, \mathbb{C})$ (see \cite{HIRAI}). More precisely, the Cartan subgroups $\H_{k}, 0 \leq k \leq q$ are of the form $\H_{k} = \H^{-}_{k}\H^{+}_{k}$, where $\H^{-}_{k}$ and $\H^{+}_{k}$ are the subgroups of $\U(p, q, \mathbb{C})$ are defined by
\begin{equation*}
\H^{+}_{k} = \left\{\H^{+}_{k}(t_{1}, \ldots, t_{k}), t_{i} \in \mathbb{R}, 1 \leq i \leq k \right\}
\end{equation*}
where $\H^{+}_{k}(t_{1}, \ldots, t_{k})$ is given by
\begin{equation*}
\H^{+}_{k}(t_{1}, \ldots, t_{k}) = \begin{pmatrix} \Id_{p-k} & & & & & & & & & 0 \\ & \ch(t_{k}) & & & & & & & \sh(t_{k}) & \\ & & \ch(t_{k-1}) & & & & & \sh(t_{k-1}) & & \\ & & & \ddots & & & \reflectbox{$\ddots$} & & & \\ & & & & \ch(t_{1}) & \sh(t_{1}) & & & & \\ & & & & \sh(t_{1}) & \ch(t_{1}) & & & & \\ & & & \reflectbox{$\ddots$} & & & \ddots & & & \\ & & \sh(t_{k-1}) & & & & & \ch(t_{k-1}) & & \\ & \sh(t_{k}) & & & & & & & \ch(t_{k}) & \\ 0 & & & & & & & & & \Id_{q-k} \end{pmatrix}
\end{equation*}
and 
\begin{equation*}
\H^{-}_{k} = \{\diag(e^{i\phi_{1}}, \ldots, e^{i\phi_{p-k}}, e^{i\theta_{k}}, \ldots, e^{i\theta_{1}}, e^{i\theta_{1}}, \ldots, e^{i\theta_{k}}, e^{i\tau_{q-k}}, \ldots, e^{i\tau_{1}}), \phi_{i}, \theta_{i}, \tau_{i} \in \mathbb{R}\}
\end{equation*}

\noindent We now work with the pair $(\G(V, \b_{V}), \G(V', \b_{V'})) = (\U(1, \mathbb{C}), \U(1, 1, \mathbb{C})) \subseteq \Sp(W)$, where $W = (\mathbb{C} \otimes \mathbb{C}^{2})_{\mathbb{R}}$ and $\langle\cdot, \cdot\rangle = \Im(\b_{V} \otimes \b_{V'})$. We denote by $\H_{1}$ and $\H_{2}$ the two Cartan subgroups of $\G'$ (up to conjugation), with $\H_{1}$ compact.

\noindent Let $\mathscr{B}' = \{v_{1}, v_{2}\}$ be a basis of $V'$ such that:
\begin{equation*}
\F = \Mat(\mathscr{B}', \b_{V'}) = \begin{pmatrix} 1 & 0 \\ 0 & -1 \end{pmatrix}.
\end{equation*}
Then, 
\begin{equation*}
\U(1, 1, \mathbb{C}) = \{g \in \GL(2, \mathbb{C}), g^{*}\F g = \F\},
\end{equation*}  
and
\begin{equation*}
\mathfrak{u}(1, 1, \mathbb{C}) = \{A \in \M(2, \mathbb{C}), A\F + \F A^{*} = 0\} = \mathbb{R}\begin{pmatrix} i & 0 \\ 0 & i \end{pmatrix} \oplus \mathbb{R}\begin{pmatrix} i & 0 \\ 0 & -i \end{pmatrix} \oplus \mathbb{R} \begin{pmatrix} 0 & 1 \\ 1 & 0 \end{pmatrix} \oplus \mathbb{R}\begin{pmatrix} 0 & i \\ -i & 0 \end{pmatrix}.
\end{equation*}

\noindent In particular, we have $\mathfrak{u}(1, 1, \mathbb{C})_{\mathbb{C}} = \mathbb{C}\Id_{2} \oplus \mathfrak{sl}(2, \mathbb{C})$. We fix the $\mathfrak{sl}(2, \mathbb{C})$-triple $(h, e, f)$ defined by $h = E_{1, 1} - E_{2, 2}$, $e = E_{1, 2}$, $f = E_{2, 1}$ and let 
\begin{equation*}
C = \exp\left(i\frac{\pi}{4}(e+f)\right).
\end{equation*}
More particularly, we have:
\begin{equation*}
C = \exp \begin{pmatrix} 0 & \frac{i\pi}{4} \\ \frac{i\pi}{4} & 0 \end{pmatrix} = \begin{pmatrix} \ch(\frac{i\pi}{4}) & \sh(\frac{i\pi}{4}) \\ \sh(\frac{i\pi}{4}) & \ch(\frac{i\pi}{4}) \end{pmatrix} = \begin{pmatrix} \cos(\frac{\pi}{4}) & i \sin(\frac{\pi}{4}) \\ i \sin(\frac{\pi}{4}) & \cos(\frac{\pi}{4}) \end{pmatrix} = \frac{1}{\sqrt{2}} \begin{pmatrix} 1 & i \\ i & 1 \end{pmatrix},
\end{equation*}
so $C \notin \mathfrak{u}(1, 1, \mathbb{C})$.

\begin{lemme}

Let
\begin{equation*}
\mathfrak{h}_{1} = \mathbb{R} \begin{pmatrix} i & 0 \\ 0 & i \end{pmatrix} \oplus \mathbb{R} \begin{pmatrix} i & 0 \\ 0 & -i \end{pmatrix} \qquad \mathfrak{h}_{2} = \mathbb{R} \begin{pmatrix} i & 0 \\ 0 & i \end{pmatrix} \oplus \mathbb{R} \begin{pmatrix} 0 & i \\ -i & 0 \end{pmatrix}
\end{equation*}
Then, $\H_{1} = \exp(\mathfrak{h}_{1})$ and $\H_{2} = \exp(\mathfrak{h}_{2})$ are the two non-conjugate Cartan subgroups of $\U(1, 1, \mathbb{C})$ and $\H_{1}$ is compact. Moreover, we have
\begin{equation*}
\begin{pmatrix} 0 & 1 \\ -1 & 0 \end{pmatrix} = C \begin{pmatrix} i & 0 \\ 0 & -i \end{pmatrix} C^{-1}
\end{equation*}

\end{lemme}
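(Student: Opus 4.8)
The plan is to verify the statement by a few explicit computations with $2\times 2$ matrices, organised around the splitting $\mathfrak{u}(1,1,\mathbb{C}) = \mathbb{R}\, i\Id_2 \oplus \mathfrak{s}$, where $\mathfrak{s} = \mathfrak{u}(1,1,\mathbb{C}) \cap \mathfrak{sl}(2,\mathbb{C})$ is the traceless part and $\F = \begin{pmatrix}1&0\\0&-1\end{pmatrix}$. First I would record that $\mathfrak{h}_1$ and $\mathfrak{h}_2$ lie in $\mathfrak{u}(1,1,\mathbb{C})$: this is immediate, since each of the three generators $\begin{pmatrix} i & 0 \\ 0 & i\end{pmatrix}$, $\begin{pmatrix} i & 0 \\ 0 & -i\end{pmatrix}$, $\begin{pmatrix} 0 & i \\ -i & 0\end{pmatrix}$ occurs in the basis of $\mathfrak{u}(1,1,\mathbb{C})$ displayed just before the lemma (equivalently, one checks $A\F + \F A^{*} = 0$ by hand). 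Both subalgebras are abelian of real dimension $2$, since $i\Id_2$ is central in $\mathfrak{gl}(2,\mathbb{C})$ and the two generators of $\mathfrak{h}_1$ are simultaneously diagonal, and both consist of semisimple elements: the elements of $\mathfrak{h}_1$ are diagonal, and an element $s\, i\Id_2 + t\begin{pmatrix} 0 & i \\ -i & 0\end{pmatrix}$ of $\mathfrak{h}_2$ is a scalar plus a Hermitian matrix, hence diagonalisable.

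Next I would argue that $\mathfrak{h}_1$ and $\mathfrak{h}_2$ are Cartan subalgebras. For $\mathfrak{h}_1$ the element $\begin{pmatrix} i & 0 \\ 0 & -i\end{pmatrix}$ has distinct eigenvalues $\pm i$, and for $\mathfrak{h}_2$ the element $\begin{pmatrix} 0 & i \\ -i & 0\end{pmatrix}$ has characteristic polynomial $\lambda^{2}-1$, hence the distinct eigenvalues $\pm 1$; in either case the centraliser of this regular element in $\mathfrak{gl}(2,\mathbb{C})$ is $2$-dimensional and equals $(\mathfrak{h}_i)_{\mathbb{C}}$, which meets $\mathfrak{u}(1,1,\mathbb{C})$ exactly in $\mathfrak{h}_i$; since $\mathfrak{u}(1,1,\mathbb{C})$ has rank $2$, this identifies $\mathfrak{h}_1,\mathfrak{h}_2$ as Cartan subalgebras and hence $\H_1 = \exp(\mathfrak{h}_1)$, $\H_2 = \exp(\mathfrak{h}_2)$ as Cartan subgroups. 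By Hirai \cite{HIRAI} there are, up to conjugation, exactly $q+1 = 2$ Cartan subgroups of $\U(1,1,\mathbb{C})$, so it remains only to check that $\H_1$ and $\H_2$ are not conjugate.

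For this I would compute the two exponentials explicitly. Writing a general element of $\mathfrak{h}_1$ as $\diag\bigl(i(s+t),\, i(s-t)\bigr)$ gives $\exp(\mathfrak{h}_1) = \{\diag(e^{i\alpha}, e^{i\beta}) : \alpha,\beta \in \mathbb{R}\}$, a compact torus. On the other hand, using that $i\Id_2$ is central, $\exp\bigl(s\, i\Id_2 + t\begin{pmatrix} 0 & i \\ -i & 0\end{pmatrix}\bigr) = e^{is}\exp\bigl(t\begin{pmatrix} 0 & i \\ -i & 0\end{pmatrix}\bigr)$, and the last factor has eigenvalues $e^{\pm t}$, which are unbounded; hence $\H_1$ is compact while $\H_2$ is not. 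Since conjugation inside $\U(1,1,\mathbb{C})$ preserves spectra (in particular compactness), $\H_1$ and $\H_2$ cannot be conjugate, and therefore they are precisely the two non-conjugate Cartan subgroups.

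Finally, the closing identity is a bare computation: with $C = \frac{1}{\sqrt{2}}\begin{pmatrix} 1 & i \\ i & 1\end{pmatrix}$ one has $\det C = 1$, hence $C^{-1} = \frac{1}{\sqrt{2}}\begin{pmatrix} 1 & -i \\ -i & 1\end{pmatrix}$, and multiplying out gives
\[
C \begin{pmatrix} i & 0 \\ 0 & -i\end{pmatrix} C^{-1} = \frac{1}{2}\begin{pmatrix} i & 1 \\ -1 & -i\end{pmatrix}\begin{pmatrix} 1 & -i \\ -i & 1\end{pmatrix} = \begin{pmatrix} 0 & 1 \\ -1 & 0\end{pmatrix}.
\]
I do not expect a genuine obstacle here, since every step is a finite calculation; the only point deserving mild care is the middle step, where one must establish ``Cartan subalgebra'' intrinsically (via the rank of $\mathfrak{u}(1,1,\mathbb{C})$ and the centraliser of a regular element, not by mere inspection) and deduce non-conjugacy from a conjugation-invariant such as compactness rather than from the explicit shapes of the matrices.
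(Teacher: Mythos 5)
Your proof is correct; the paper itself states this lemma without any proof, treating it as a routine verification, so there is nothing to compare it against. Your computations (membership in $\mathfrak{u}(1,1,\mathbb{C})$, regular semisimple generators, the centraliser argument identifying $(\mathfrak{h}_{i})_{\mathbb{C}}$, Hirai's count $q+1=2$, compactness as the conjugation invariant, and the explicit conjugation by $C$) all check out. The only point you pass over quickly is the step ``Cartan subalgebra, hence $\exp(\mathfrak{h}_{i})$ is the Cartan subgroup'': with the usual definition of Cartan subgroup as the centraliser of $\mathfrak{h}_{i}$ in $\U(1,1,\mathbb{C})$, you should note that this centraliser is connected and equals $\exp(\mathfrak{h}_{i})$ — a one-line check here, since the centraliser of $\mathfrak{h}_{2}$ consists of the matrices $a\Id_{2}+b\begin{pmatrix} 0 & i \\ -i & 0\end{pmatrix}$ in $\U(1,1,\mathbb{C})$, which the unitarity condition forces to be exactly $\bigl\{e^{i\theta}\bigl(\ch(t)\Id_{2}+\sh(t)\begin{pmatrix} 0 & i \\ -i & 0\end{pmatrix}\bigr)\bigr\}=\exp(\mathfrak{h}_{2})$, and similarly for $\mathfrak{h}_{1}$.
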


\begin{rema}

More particularly, the subgroups $\H_{1}$ and $\H_{2}$ are given by
\begin{equation*}
\H_{1} = \left\{\begin{pmatrix} e^{i\theta_{1}} & 0 \\ 0 & e^{i\theta_{2}} \end{pmatrix}, \theta_{1}, \theta_{2} \in \mathbb{R}\right\}
\end{equation*}
and 
\begin{equation*}
\H_{2} = \left\{\begin{pmatrix} e^{i\theta_{1}} \ch(X) & i \sh(X) \\ -i\sh(X) & e^{i\theta_{1}} \ch(X) \end{pmatrix}, \theta_{1}, X \in \mathbb{R} \right\} = \left\{\begin{pmatrix} e^{i\theta_{1}} & 0 \\ 0 & e^{i\theta_{1}} \end{pmatrix}, \theta_{1} \in \mathbb{R} \right\} . \left\{\begin{pmatrix} \ch(X) & i\sh(X) \\ -i\sh(X) & \ch(X) \end{pmatrix}, X \in \mathbb{R} \right\} = \T_{2} \A_{2}.
\end{equation*}
The set $\A_{2}$ is the split part of $\H_{2}$ (see \cite[Section~2.3.6]{WAL}).

\end{rema}

\begin{prop}

For all element $\widetilde{g} \in \widetilde{\G}'^{++}$, we get:
\begin{equation*}
\Theta(\widetilde{g}) = \det(g)^{\frac{1}{2}} \det(g-1)^{-1}.
\end{equation*}

\end{prop}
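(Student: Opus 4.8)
The plan is to reduce everything to the defining relation of the oscillator cover. By construction of $\widetilde{\Sp(W_{\mathbb{C}})}^{++}$, for $\widetilde{g} = (g,\xi)$ in the semigroup one has $\Theta(\widetilde{g}) = \xi$ with $\xi^{2} = \det(i(g-1))^{-1}$, the determinant being computed on $W_{\mathbb{C}}$. For the pair $(\U(1,\mathbb{C}),\U(1,1,\mathbb{C}))$ we have $W = (\mathbb{C}\otimes\mathbb{C}^{2})_{\mathbb{R}}$, hence $\dim_{\mathbb{C}}W_{\mathbb{C}} = 4$ and $\det(i(g-1)) = i^{4}\det(g-1) = \det_{W_{\mathbb{C}}}(g-1)$. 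So the real content is the computation of $\det_{W_{\mathbb{C}}}(g-1)$, for $g$ the image in $\Sp(W_{\mathbb{C}})$ of an element of $\G'_{\mathbb{C}} = \GL(2,\mathbb{C})$, in terms of the $2\times 2$ determinants $\det(g)$ and $\det(g-1)$.

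First I would identify the eigenvalues of such a $g$ on $W_{\mathbb{C}}$. Write $i_{W}$ for the complex structure of $W = \mathbb{C}\otimes\mathbb{C}^{2}$ and $W_{\mathbb{C}} = W^{1,0}\oplus W^{0,1}$ for the induced decomposition. The group $\G'$ acts $i_{W}$-linearly (through $X\mapsto Xg^{-1}$), hence commutes with $i_{W}$ and preserves $W^{1,0}$; since this is a Zariski-closed condition, $\G'_{\mathbb{C}}$ preserves $W^{1,0}$ as well and acts there by the holomorphic extension of $X\mapsto Xg^{-1}$. If $\mu_{1},\mu_{2}$ are the eigenvalues of the $2\times 2$ matrix $g$, the two eigenvalues of $g$ on $W^{1,0}$ are therefore $\mu_{1}^{-1},\mu_{2}^{-1}$. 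Since $g\in\Sp(W_{\mathbb{C}})$, its four eigenvalues on $W_{\mathbb{C}}$ form a multiset stable under $\mu\mapsto\mu^{-1}$, so the remaining two are $\mu_{1},\mu_{2}$ (for generic $g$; the general case then follows by continuity of both sides). Using $(\mu^{-1}-1)(\mu-1) = -\mu^{-1}(\mu-1)^{2}$ this gives
\[
\det_{W_{\mathbb{C}}}(g-1) = \prod_{j=1}^{2}(\mu_{j}-1)(\mu_{j}^{-1}-1) = \mu_{1}^{-1}\mu_{2}^{-1}\,(\mu_{1}-1)^{2}(\mu_{2}-1)^{2} = \det(g)^{-1}\det(g-1)^{2}.
\]

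Feeding this into the first step yields $\xi^{2} = \det(g)\det(g-1)^{-2}$, i.e. $(\xi\det(g-1))^{2} = \det(g)$; thus $\xi\det(g-1)$ is the square root of $\det(g)$ carried by $\widetilde{\G}'^{++}$ (the restriction of the covering datum, which one checks is the natural one by specialising Proposition \ref{Proposition19081990} to a diagonal $\widetilde{g}'$ with trivial $\U(1,\mathbb{C})$-component). Denoting it by $\det(g)^{\frac{1}{2}}$, we conclude $\Theta(\widetilde{g}) = \xi = \det(g)^{\frac{1}{2}}\det(g-1)^{-1}$. I expect the only delicate part to be the bookkeeping in the middle paragraph — keeping the embedding $\G'_{\mathbb{C}}\hookrightarrow\Sp(W_{\mathbb{C}})$ and the two complex structures on $W$ straight, and handling the degeneracy $\mu_{1}\mu_{2}=1$ — together with verifying that the square root produced here is indeed the one intended on the left-hand side.
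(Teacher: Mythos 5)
Your argument is correct and follows essentially the same route as the paper: both proofs reduce the claim, via the defining relation $\xi^{2}=\det(i(g-1))^{-1}$ on $W_{\mathbb{C}}$, to the identity $\det_{W_{\mathbb{C}}}(g-1)=\det(g)^{-1}\det(g-1)^{2}$, obtained by splitting $W_{\mathbb{C}}$ into two $2$-dimensional invariant pieces carrying mutually inverse eigenvalues (the paper writes the image of $g$ explicitly as $\mathrm{diag}(g,g^{*})$ and uses $\det(g^{*}-1)=\det(g^{-1}-1)$, whereas you identify the spectrum on $W^{1,0}$ and invoke the inversion-symmetry of the spectrum of a symplectic map together with genericity and continuity). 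Your additional care with the branch of the square root and with allowing $g\in\G'_{\mathbb{C}}$ rather than $\U(1,1,\mathbb{C})$ goes slightly beyond what the paper records, but the substance is the same.
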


\begin{proof}

Let $g \in \U(1, 1, \mathbb{C})$ $\left(\leftrightarrow \begin{pmatrix} g & 0 \\ 0 & g^{*} \end{pmatrix} \in \Sp(4, \mathbb{R})\right)$. We get:
\begin{eqnarray*}
\det_{W_{\mathbb{C}}}(i(g-1)) & = & \det_{W_{\mathbb{C}}}(g-1) = \det_{\mathbb{C}^{2}}(g-1)  \det_{\mathbb{C}^{2}}(g^{*}-1) \\
                                               & = & \det(g-1) \det(g^{-1} - 1) = \det(g-1)\det(g^{-1}) \det(1-g) \\
                                               & = & \det(g)^{-1} \det(g-1)^{2}.
\end{eqnarray*}                                               

\end{proof}

\noindent We can now determine $\Theta_{\Pi'_{k}}$ on $\H_{2}$ (more particularly on $\A_{2}$). According to Equation \eqref{FormuleIntegrale1908}, we get for $X > 0$:
\begin{eqnarray*}
\Theta_{\Pi'_{k}}\left(\begin{pmatrix} \ch(X) & i\sh(X) \\ -i\sh(X) & \ch(X) \end{pmatrix}\right) & = & \Theta_{\Pi'_{k}}\left(\exp\begin{pmatrix} 0 & iX \\ -iX & 0\end{pmatrix}\right) = \displaystyle\int_{\S^{1}} z^{-k} \Theta\left(z\exp\begin{pmatrix} 0 & iX \\ -iX & 0\end{pmatrix}\right) dz \\
                               & = & \displaystyle\int_{\S^{1}} z^{-k} \Theta\left(z \exp C\begin{pmatrix} -X & 0 \\ 0 & X \end{pmatrix}C^{-1}\right) dz = \displaystyle\int_{\S^{1}} z^{-k} \Theta\left( \begin{pmatrix} ze^{-X} & 0 \\ 0 & ze^{X} \end{pmatrix}\right) dz \\
                               & = & \displaystyle\int_{\S^{1}} z^{-k+1} \det \begin{pmatrix} ze^{-X} - 1 & 0 \\ 0 & ze^{X} - 1\end{pmatrix}^{-1} dz = \displaystyle\int_{\S^{1}} \cfrac{z^{-k+1}}{(ze^{X}-1)(ze^{-X} - 1)} dz \\
                               & = & \displaystyle\int_{\S^{1}} \cfrac{z^{-k+1}}{(z-e^{-X})(z - e^{X})} dz = \begin{cases} \cfrac{e^{X(k-1)}}{e^{X} - e^{-X}} & \text{ if } k \geq 1 \\ \cfrac{e^{-X(k-1)}}{e^{-X} - e^{X}} & \text{ otherwise } \end{cases}
\end{eqnarray*}    
                           

\noindent More generally, for all $\theta \in \mathbb{R}$ and $X \in \mathbb{R}^{+}$, we get:
\begin{eqnarray*}
& & \Theta_{\Pi'_{k}}\left(\begin{pmatrix} e^{i\theta}\ch(X) & ie^{i\theta}\sh(X) \\ -ie^{i\theta}\sh(X) & e^{i\theta}\ch(X) \end{pmatrix}\right) = \Theta_{\Pi'_{k}}\left(\exp\left( \begin{pmatrix} i\theta & 0 \\ 0 & i\theta \end{pmatrix} +\begin{pmatrix} 0 & iX \\ -iX & 0\end{pmatrix}\right)\right) \\
& = &  \displaystyle\int_{\S^{1}} z^{-k} \Theta\left(z\exp\left( \begin{pmatrix} i\theta & 0 \\ 0 & i\theta \end{pmatrix} +\begin{pmatrix} 0 & iX \\ -iX & 0\end{pmatrix}\right)\right) dz = \displaystyle\int_{\S^{1}} z^{-k} \Theta\left(z \exp \left(C \left( \begin{pmatrix} i\theta & 0 \\ 0 & i\theta \end{pmatrix} +\begin{pmatrix} -X & 0 \\ 0 & X \end{pmatrix}\right)C^{-1}\right)\right) dz \\ 
& = & \displaystyle\int_{\S^{1}} z^{-k} \Theta\left( \begin{pmatrix} ze^{-X}e^{i\theta} & 0 \\ 0 & ze^{i\theta}e^{X} \end{pmatrix}\right) dz = \displaystyle\int_{\S^{1}} z^{-k+1}e^{i\theta} \det \begin{pmatrix} ze^{i\theta}e^{-X} - 1 & 0 \\ 0 & ze^{i\theta}e^{X} - 1\end{pmatrix}^{-1} dz \\
& = & \displaystyle\int_{\S^{1}} \cfrac{z^{-k+1}e^{i\theta}}{(ze^{i\theta}e^{X}-1)(ze^{i\theta}e^{-X} - 1)} dz = \displaystyle\int_{\S^{1}} \cfrac{e^{-i\theta}z^{-k+1}}{(z-e^{-i\theta}e^{-X})(z - e^{-i\theta}e^{X})} dz = \begin{cases} \cfrac{e^{i\theta(k-1)}e^{X(k-1)}}{e^{X} - e^{-X}} & \text{ if } k \geq 1 \\ \cfrac{e^{i\theta(k-1)}e^{-X(k-1)}}{e^{-X} - e^{X}} & \text{ otherwise } \end{cases}
\end{eqnarray*}
We got similar results for $X < 0$.

\section{A conjecture of T. Przebinda}

\label{ConjectureT}

\noindent In \cite{TOM3}, T. Przebinda investigated the correspondence of characters for a general dual pair. We recall here the Howe's duality theorem in this context. Let $(W, \langle \cdot, \cdot\rangle )$ be a symplectic vector space over $\mathbb{R}$, $\widetilde{\Sp(W)}$ the corresponding metaplectic group, $(\omega, \mathscr{H})$ the metaplectic representation of $\widetilde{\Sp(W)}$, $(\G, \G')$ be a dual pair in $\Sp(W)$ and $(\widetilde{\G}, \widetilde{\G}')$ the corresponding dual pair in $\widetilde{\Sp(W)}$.

\noindent We denote by $\mathscr{R}(\widetilde{\G}, \omega)$ the set of equivalence classes of irreducible admissible representations of $\widetilde{\G}$ which are infinitisemally equivalent to a quotient of $\omega^{\infty}$. In \cite{HOW1}, R. Howe proved that there exists a bijection between $\mathscr{R}(\widetilde{\G}, \omega)$ and $\mathscr{R}(\widetilde{\G}', \omega)$ whose graph is $\mathscr{R}(\widetilde{\G}. \widetilde{\G}', \omega)$. We denote by $\theta$ the following one-to-one map:
\begin{equation*}
\theta: \mathscr{R}(\widetilde{\G}, \omega) \ni \Pi \to \Pi' = \theta(\Pi) \in \mathscr{R}(\widetilde{\G}', \omega).
\end{equation*}

\noindent Let's $(\G, \G')$ be an irreducible reductive dual pair in $\Sp(W)$. Without loss of generality, we assume that $\rk(\G) \leq \rk(\G')$. We denote by $\{\H_{1}, \ldots, \H_{n}\}$ the set of conjugacy classes of Cartan subgroups of $\G$. As explained in \cite[Section~2.3.6]{WAL}, for every $1 \leq i \leq n$, there exists a decomposition of $\H_{i}$ of the form
\begin{equation*}
\H_{i} = \T_{i}\A_{i},
\end{equation*}
where $\T_{i}$ is compact and $\A_{i}$ is the split part of $\H_{i}$ (by convention, $\H_{1}$ is the compact Cartan subgroup, i.e. $\A_{1} = \{\Id\}$).

\noindent For every $1 \leq i \leq n$, we denote by $\A^{'}_{i} = C_{\Sp(W)}(\A_{i})$ and $\A^{''}_{i} = C_{\Sp(W)}(\A^{'}_{i})$ (in particular, $\A^{'}_{1} = \Sp(W)$ and $\A^{''}_{1} = Z(\Sp(W)) = \{\pm 1\}$). Then, $(\A^{'}_{i}, \A^{''}_{i})$ is a reductive dual pair in $\Sp(W)$ (not irreducible in general).

\noindent We define a measure $\overline{dw_{i}}$ on the quotient $\A^{''}_{i} \setminus W$ given by:
\begin{equation*}
\displaystyle\int_{W} \phi(w) dw = \displaystyle\int_{\A^{''} \setminus W} \displaystyle\int_{\A^{'}_{i}} \phi(a'w) d\mu_{\A^{'}_{i}}(a') \overline{dw}.
\end{equation*}

\noindent In \cite[Section~2]{TOM3}, T. Przebinda define the following distribution on $\A^{'}_{i}$:
\begin{equation*}
\Chc(\Psi) = \displaystyle\int_{\A^{''}_{i} \setminus W} \left(\displaystyle\int_{\A^{'}_{i}} \Psi(\widetilde{g}) T(\widetilde{g}) d\widetilde{g}\right)(w) \overline{dw} \qquad (\Psi \in \mathscr{C}^{\infty}_{c}(\A^{'}_{i})).
\end{equation*}
As mentioned in Section \ref{IntegralFormula}, the integral 
\begin{equation*}
\displaystyle\int_{\A^{'}_{i}} \Psi(\widetilde{g}) T(\widetilde{g}) d\widetilde{g} \in \S(W),
\end{equation*}
in particular, $\Chc(\Psi)$ is well defined. Moreover, for all $\widetilde{h} \in \H^{\reg}_{i}$, the intersection of the wave front set $\WF(\Chc)$ of the distribution $\Chc$ with the conormal bundle of the embedding
\begin{equation*}
\widetilde{\G'} \ni \widetilde{g}' \to \widetilde{h}\widetilde{g}' \in \widetilde{A'_{i}}
\end{equation*}
is empty. In particular, there is a unique restriction of the distribution $\Chc$ to $\widetilde{\G'}$. We denote by $\Chc_{\widetilde{h}}$ this restriction. In \cite[Conjecture~2.18]{TOM3}, T. Przebinda conjectured the following result:

\begin{conj}[T. Przebinda]

We denote by $\G'_{1}$ the connected component at identity of $\G'$. We assume that $(\Theta_{\Pi'})_{\widetilde{\G'} \setminus \widetilde{\G'_{1}}} = 0$. For every $\Psi \in \mathscr{C}^{\infty}_{c}(\widetilde{\G'_{1}})$, the character $\Theta_{\Pi'}$ of $\Pi'$ is given by:
\begin{equation}
\Theta_{\Pi'}(\Psi) = \K_{\Pi} \sum\limits_{i=1}^n \cfrac{1}{|\mathscr{W}(\H_{i})|}\displaystyle\int_{\widetilde{\H}^{\reg}_{i}} \overline{\Theta_{\Pi}(\widetilde{h})} |D(\widetilde{h})|^{2} \Chc_{\widetilde{h}}(\Psi) d\widetilde{h}.
\label{ConjectureTomasz1}
\end{equation}
where $\K_{\Pi}$ is a complex number depending of $\Pi$ (one can check \cite[Definition~2.17]{TOM3}).

\end{conj}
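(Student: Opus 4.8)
The plan is to run the same machine that produced Theorem~\ref{TheoCharacter} and the integral formula~\eqref{FormuleIntegrale1908}, but with the compact group $\G$ replaced by a general reductive member of the pair. The single compact Cartan subgroup $\T$ is then replaced by the whole family $\H_1,\dots,\H_n$ of conjugacy classes of Cartan subgroups of $\G$, and the ``evaluate at $0$ after a limit in the oscillator semigroup'' step is replaced by the regularized pairing $\Chc_{\widetilde h}$ of \cite{TOM3}. The extra input the statement needs, and the reason the $\Chc$ construction is introduced, is that along a non-compact Cartan $\H_i=\T_i\A_i$ the naive integral $\int_{\widetilde\G}\overline{\Theta_\Pi(\widetilde g)}\,\Theta(\widetilde g\widetilde g'\widetilde p)\,d\widetilde g$, after Weyl integration, no longer converges in the split directions $\A_i$; this is cured by first integrating the distribution kernel $T$ over the larger group $\A_i'=C_{\Sp(W)}(\A_i)$ and then over the quotient $\A_i''\backslash W$ with the measure $\overline{dw_i}$.

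First I would reduce, as in the proof of Theorem~\ref{TheoCharacter}, to a statement about the distribution kernel $T=\Theta\cdot t$. For $\Psi\in\mathscr C_c^\infty(\widetilde\G'_1)$ one wants $\Theta_{\Pi'}(\Psi)$ to be, up to the constant $\K_\Pi$ of \cite[Definition~2.17]{TOM3}, a suitable trace built from $\int_{\widetilde\G}\overline{\Theta_\Pi(\widetilde g)}\,\omega(\widetilde g)\,d\widetilde g$ composed with $\omega(\Psi)$; here $\K_\Pi$ plays the role that $d_\Pi$ and the projection formula $\mathscr P_\Pi=d_\Pi\int_{\widetilde\G}\overline{\Theta_\Pi}\,\omega^\infty$ played in the compact case, and the hypothesis $(\Theta_{\Pi'})|_{\widetilde\G'\setminus\widetilde\G'_1}=0$ lets one test only against $\mathscr C_c^\infty(\widetilde\G'_1)$. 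Passing through $\omega=\Op\circ\mathscr K\circ T$ and the trace-evaluation identity $\tr(\Op\circ\mathscr K(\phi))=\phi(0)$, the right-hand side becomes a regularized value at the origin $0\in W$ of the Schwartz object $\int_{\widetilde\G'}\int_{\widetilde\G}\overline{\Theta_\Pi(\widetilde g)}\,\Psi(\widetilde g')\,T(\widetilde g\widetilde g')\,d\widetilde g\,d\widetilde g'$, whose Schwartz-ness comes from the arguments around Equation~\eqref{Schwartz1} and Proposition~\ref{PropCocycle}.

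Next I would apply the Weyl integration formula on $\widetilde\G$ to the $\widetilde g$-variable; this produces the sum over $i$, the weights $\frac1{|\mathscr W(\H_i)|}$ and $|D(\widetilde h)|^2$, and an inner conjugation integral over $\widetilde\G/\widetilde\H_i$. Exactly as in the compact case, that inner integral should collapse by invariance of $T$ under $\Sp(W)$-conjugation together with $\Sp(W)\cdot\Sp(W_{\mathbb C})^{++}\subseteq\Sp(W_{\mathbb C})^{++}$ (Equation~\eqref{Semi-groupe1}), leaving exactly the shape of the claimed formula~\eqref{ConjectureTomasz1}, with the role of $\Chc_{\widetilde h}(\Psi)$ played, a priori, by the (still only formal) regularized value at the origin $0\in W$ of the distribution $\int_{\widetilde\G'}\Psi(\widetilde g')\,T(\widetilde h\widetilde g')\,d\widetilde g'$. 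For $i=1$, where $\H_1$ is compact, $\A_1'=\Sp(W)$ and $\A_1''=\{\pm1\}$, that regularized value is literally $\Chc_{\widetilde h}(\Psi)$ and the whole identity reduces to Theorem~\ref{TheoCharacter} and the formula~\eqref{FormuleIntegrale1908}.

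The main obstacle is the term-by-term identification, for $i\ge 2$, of that inner factor $\int_{\widetilde\G'}\Psi(\widetilde g')\,T(\widetilde h\widetilde g')\,d\widetilde g'$ with the restriction of the distribution $\Chc$ along the slice $\widetilde g'\mapsto\widetilde h\widetilde g'$: for $\widetilde h$ with nontrivial split part $T(\widetilde h\widetilde g')$ is not integrable over $W$, so ``evaluation at $0$'' has to be replaced by the regularized pairing. Here I would decompose $\widetilde g'\in\widetilde\G'$ relative to the (in general reducible) dual pair $(\A_i',\A_i'')$, use that $\A_i$ commutes with $\A_i'$ to trade the $W$-integration for an integration over $\A_i'$ followed by one over $\A_i''\backslash W$, and check that the resulting object is precisely the distribution $\Chc$ of \cite[Section~2]{TOM3}, restricted as in the paper. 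The delicate part throughout is the microlocal bookkeeping --- controlling $\WF(T)$, its transformation under the $\widetilde\G\times\widetilde\G'$-action, and its transversality to the conormal bundles of the embeddings $\widetilde g'\mapsto\widetilde h\widetilde g'$ and $\widetilde g\mapsto\widetilde g\widetilde h\widetilde g^{-1}$, so that the Fubini-type exchanges and the restriction defining $\Chc_{\widetilde h}$ are all legitimate --- together with the precise determination of $\K_\Pi$ from the measure normalizations and, underneath everything, the right non-compact substitute for the compact projection formula that justifies the very first reduction. The remaining identifications run parallel to the compact case already worked out in Sections~\ref{Oscillator}--\ref{IntegralFormula}.
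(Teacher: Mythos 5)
The statement you are trying to prove is stated in the paper as a \emph{conjecture} (it is Przebinda's Conjecture 2.18 in \cite{TOM3}); the paper offers no proof of it, and your proposal does not close the gap either --- it is essentially a restatement of the heuristic that motivates the conjecture, with every genuinely hard point deferred. The most serious gap is your very first reduction. In the compact case the whole argument of Theorem \ref{TheoCharacter} rests on the projection formula $\mathscr{P}_{\Pi} = d_{\Pi}\int_{\widetilde{\G}}\overline{\Theta_{\Pi}(\widetilde{g})}\,\omega^{\infty}(\widetilde{g})\,d\widetilde{g}$, which uses that $\widetilde{\G}$ is compact, that $\Pi$ occurs as a genuine subrepresentation of $\omega^{\infty}$ with finite multiplicity space, and that $d_{\Pi}<\infty$. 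For a non-compact member $\G$ none of this survives: $\Pi$ is only infinitesimally a quotient of $\omega^{\infty}$, the integral $\int_{\widetilde{\G}}\overline{\Theta_{\Pi}(\widetilde{g})}\,\omega(\widetilde{g})\,d\widetilde{g}$ over a non-compact group has no obvious meaning, and there is no candidate ``isotypic projection'' whose trace against $\omega(\Psi)$ would give $\Theta_{\Pi'}(\Psi)$. You acknowledge this (``the right non-compact substitute for the compact projection formula''), but that substitute is precisely the content of the conjecture; without it the rest of the outline has nothing to regularize. This is also why $\K_{\Pi}$ cannot be pinned down by ``measure normalizations'': in the compact case the analogous constant is $1/d_{\Pi}$ coming from the projection, and in the general case no such identity is available.

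The second gap is the term-by-term identification for $i\geq 2$. After a (formal) Weyl integration over $\widetilde{\G}$ applied to a divergent expression, you propose to recognize the inner factor as $\Chc_{\widetilde{h}}(\Psi)$. But the Cauchy--Harish-Chandra integral is not a regularization of an integral you have already shown to exist; it is an independently defined distribution on $\widetilde{\A}'_{i}$, restricted to the slice $\widetilde{g}'\mapsto\widetilde{h}\widetilde{g}'$ thanks to the wave front set condition, and the assertion that the resulting sum over Cartan subgroups reproduces $\Theta_{\Pi'}$ is exactly the open statement --- it is not something that ``collapses by invariance'' as in Lemma \ref{FormuleIntegrale1908}, where compactness of $\T$ and of $\G$ make every interchange legitimate. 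So your plan, while it correctly identifies why $\Chc$ is introduced and correctly recovers the case $i=1$ from Theorem \ref{TheoCharacter}, does not constitute a proof: the convergence, the Fubini-type exchanges, the microlocal transversality statements, the identification with $\Chc$, and the determination of $\K_{\Pi}$ are all left as assumptions, and together they \emph{are} the conjecture. (For context: the paper only uses this conjecture as input for a double-lifting strategy; cases of it have been established elsewhere, e.g.\ in the stable range in \cite{TOM2}, by methods substantially more involved than the compact-case argument of Sections \ref{Oscillator}--\ref{IntegralFormula}.)
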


\noindent We now explain how can we get characters by double lifting starting with a compact dual pair. To simplify the notations, we will present that for the dual pair of unitary groups. Let  $(\G = \U(1, \mathbb{C}), \G' = \U(1, 1, \mathbb{C}))$ in $\Sp(W_{\mathbb{R}})$, where $W_{\mathbb{R}} = (\mathbb{C}^{1} \otimes_{\mathbb{C}} \mathbb{C}^{1, 1})_{\mathbb{R}}$ and $(\G_{1}, \G'_{1}) = (\U(1, 1, \mathbb{C}), \U(m, m+1))$ in $\Sp(W^{m}_{\mathbb{R}})$, where $W^{1}_{\mathbb{R}} = (\mathbb{C}^{1, 1} \otimes_{\mathbb{C}} \mathbb{C}^{m, 1+m})_{\mathbb{R}}$. We denote by $(\omega, \mathscr{H})$ the metaplectic representation of $\widetilde{\Sp(W_{\mathbb{R}})}$, by $(\omega_{m}, \mathscr{H}_{m})$ the metaplectic representation of $\widetilde{\Sp(W^{m}_{\mathbb{R}})}$ and by $\theta$ and $\theta_{m}$ the two bijections
\begin{equation*}
\theta: \mathscr{R}(\widetilde{\G}, \omega) \to \mathscr{R}(\widetilde{\G'}, \omega) \qquad \theta_{m}: \mathscr{R}(\widetilde{\G_{1}}, \omega_{m}) \to \mathscr{R}(\widetilde{\G'_{1}}, \omega_{m}).
\end{equation*}

\begin{nota}

For two positive integers $r, s$, we denote by $\det^{\frac{r-s}{2}}-\cov$ the double cover of $\U(r, s, \mathbb{C})$ given by:
\begin{equation*}
\left\{(g, \xi) \in \U(r, s, \mathbb{C}) \times \mathbb{C}^{*}, \xi^{2} = \det(g)^{r-s}\right\}.
\end{equation*}

\end{nota}

\noindent According to \cite[Section~1.2]{PAUL}, we have:
\begin{equation*}
\widetilde{\G} \approx \det^{\frac{1}{2}}-\cov, \qquad \widetilde{\G'} \approx \det^{0}-\cov, \qquad \widetilde{\G_{1}} \approx \det^{0}-\cov \qquad \widetilde{\G'_{1}} \approx \det^{\frac{1}{2}}-\cov.
\end{equation*}
where $\det^{0}-\cov$ is the trivial cover.

\noindent Let $\Pi \in \mathscr{R}(\widetilde{\G}, \omega)$ such that $\theta(\Pi) \neq \{0\}$. According to a result of J-S Li \cite{LI}, $\theta(\Pi) \in \mathscr{R}(\widetilde{\G_{1}}, \omega_{m})$ and using the conservation of Kudla \cite{KUD},  $\theta_{m}(\theta(\Pi)) \neq \{0\}$.

\noindent We now assume that the dual pair $(\G_{1}, \G'_{1})$ is in the stable range (with $\rk(\G_{1}) \leq \rk(\G'_{1})$).

\noindent As explained previously, the weights of the representations $\Pi$ such that $\theta(\Pi) \neq \{0\}$ are well-known (see \cite{VER}). The distribution character $\Theta_{\theta_{m}(\theta(\Pi))}$ is given, for all $\Psi \in \mathscr{C}^{\infty}_{c}(\widetilde{\G'_{1}})$, by the following formula:
\begin{equation*}
\Theta_{\theta_{m}(\theta(\Pi))}(\Psi) = \K_{\Pi} \sum\limits_{i=1}^{2} \cfrac{1}{|\mathscr{W}(\H_{i})|}\displaystyle\int_{\widetilde{\H}^{\reg}_{i}} \overline{\Theta_{\theta(\Pi)}(\widetilde{h_{i}})} |D(\widetilde{h_{i}})|^{2} \Chc_{\widetilde{h_{i}}}(\Psi) d\widetilde{h_{i}}.
\end{equation*}
where $\{\H_{1}, \H_{2}\}$ the set of Cartan subgroups of $\G'$ (up to equivalence). The value of the character $\Theta_{\theta(\Pi)}$ on $\H_{1}$ can be obtained using the formula given in Proposition \ref{CharacterU(1,C)} (it can also be obtained using Enright's formula (see \cite[Corollary~2.3]{ENR}). The representation $\theta(\Pi)$ is an irreducible unitary highest weight module. According to H. Hecht's thesis \cite{HEC}, the formula on the other Cartan subgroups is given by the same formula on the other Cartan subgroups.
$\H_{2}$ can be obtained similarly.
 
\noindent Clearly, the same method can be applied in "higher dimensions".

\appendix

\section{The oscillator semigroup for $\U(1, 1, \mathbb{C})$} 

\label{AppendixA}

\noindent In this appendix, we would like to prove the equality given in Equation \ref{Semi} of the Proposition \ref{Proposition19081990}.

\noindent We recall here some well-known facts concerning complexifications. Let's $V$ be a complex dimension $n$ endowed with an antilinear involution $c$ (i.e. $c^{2} = 1$ and $c(\lambda v) = \bar{\lambda} c(v)$, $\lambda \in \mathbb{C}, v \in V$). Then, we have the decomposition:
\begin{equation}
V = \left\{v \in V, c(v) = v\right\} \oplus \left\{v \in V, c(v) = -v\right\} = \Re(V, c) \oplus \Im(V, c).
\label{RealVectorSpace}
\end{equation}
Both $\Re(V, c)$ and $\Im(V, c)$ are vector spaces over $\mathbb{R}$ of dimension $n$. Moreover, 
\begin{equation*}
\Re(V, c) \ni v \to iv \in \Im(V, c)
\end{equation*}
is well-defined and an isomorphism of $\mathbb{R}$ vector spaces. We denote by $V_{\mathbb{R}}$ the vector space given in Equation \eqref{RealVectorSpace}.

\begin{exem}

Let $V = \mathbb{C}^{n}$ and $c: V \to V$ given by $c(v) = \bar{v}$ the natural conjugation on $\mathbb{C}^{n}$. Then, 
\begin{equation*}
\Re(V) = \left\{x \in \mathbb{C}^{n}, x = \bar{x}\right\} = \mathbb{R}^{n} \qquad \Im(V) = \left\{x \in \mathbb{C}^{n}, x = -\bar{x}\right\} = i\mathbb{R}^{n}.
\end{equation*}

\end{exem}

\noindent On the vector space $V \oplus V$, we define the map $\tilde{c}$ given by
\begin{equation*}
\tilde{c}: V \oplus V \ni (u, v) \to (c(v), c(u)) \in V \oplus V
\end{equation*}
is an antilinear involution. As in Equation \eqref{RealVectorSpace}, we define the spaces $\Re(V \oplus V, \tilde{c})$ and $\Im(V \oplus V, \tilde{c})$. In particular, we have:
\begin{equation*}
\Re(V \oplus V, \tilde{c}) = \left\{(v, c(v)), v \in V\right\}.
\end{equation*}
Moreover, tha map:
\begin{equation}
V_{\mathbb{R}} \ni v \to (v, c(v)) \in \Re(V \oplus V, \tilde{c}) \subseteq V \oplus V
\label{VRiso}
\end{equation} 
is an isomorphism of real vector spaces. In particular, the complexification of $V_{\mathbb{R}}$, denoted by $(V_{\mathbb{R}})_{\mathbb{C}}$, is $V \oplus V$.

\noindent For all $(x, y) \in V \oplus V$, it's "conjugate" $\tilde{c}(x, y)$ is equal to $(c(y), c(x))$. Moreover, there exists $a, b \in V$ such that
\begin{equation*}
(x, y) = (a, c(a)) + (b, -c(b)) \in \Re(V \oplus V, \tilde{c}) \oplus \Im(V \oplus V, \tilde{c}).
\end{equation*}
More particularly, we have $a = \frac{u +c(v)}{2}$ and $b = \frac{u -c(v)}{2}$.

\noindent Let's now assume that the space $V$ is endowed with an hermitian form $\b_{V}$ of signature $(p, q)$ (i.e. there exists a basis $\mathscr{B}_{V}$ of $V$ such that $\F = \Mat(\b_{V}, \mathscr{B}_{V}) = \Id_{p, q}$). On the space $W = V_{\mathbb{R}}$, we have a natural symplectic form $\b$ given by $\b = \Im(\b_{V})$. We denote by $W_{\mathbb{C}}$ the complexification of $W$ and by $\b_{\mathbb{C}}$ th corresponding symplectic form on $W_{\mathbb{C}}$. Then, using the identification given in Equation \eqref{VRiso}, we get for all $w = (w_{1}, w_{2}) \in W_{\mathbb{C}} = V \oplus V$, we get:
\begin{eqnarray*}
H(w, w) & = & \Im\left(\b_{V}\left(\frac{w_{1} + c(w_{2})}{2}, -\frac{i(w_{1}-c(w_{2}))}{2}\right)\right) \\
             & = & \frac{1}{4}\left(\Re(\b_{V}(w_{1}, w_{1})) - \Re(\b_{V}(w_{1}, c(w_{2}))) + \Re(\b_{V}(c(w_{2}), w_{1})) - \Re(\b_{V}(c(w_{2}), c(w_{2})))\right) \\ 
             & = & \frac{1}{4}\left(\Re(\b_{V}(w_{1}, w_{1})) - \Re(\overline{\b_{V}(w_{1}, c(w_{2}))}) + \Re(\b_{V}(c(w_{2}), w_{1})) - \Re(\b_{V}(c(w_{2}), c(w_{2})))\right) \\ 
             & = & \frac{1}{4}\left(\Re(\b_{V}(w_{1}, w_{1})) - \Re(\b_{V}(c(w_{2}), c(w_{2})))\right) \\ 
             & = & \frac{1}{4}\left(\Re(\b_{V}(w_{1}, w_{1})) - \Re(\b_{V}(w_{2}, w_{2}))\right)
\end{eqnarray*}             

For all $g \in \Sp(W_{\mathbb{C}})^{++}$, we get, according to Equation \eqref{SpWC++}, that:
\begin{eqnarray*}
g \in \G^{++} & \Leftrightarrow & H\left(g\begin{pmatrix} w_{1} \\ w_{2}\end{pmatrix}, g\begin{pmatrix} w_{1} \\ w_{2}\end{pmatrix}\right) < H\left(\begin{pmatrix} w_{1} \\ w_{2}\end{pmatrix}, \begin{pmatrix} w_{1} \\ w_{2}\end{pmatrix}\right) \qquad (\forall (w_{1}, w_{2}) \in V \oplus V) \\
                                            & \Leftrightarrow & \Re(\b_{V}(w_{1}, w_{1})) - \Re(\b_{V}(w_{2}, w_{2})) > \Re(\b_{V}(gw_{1}, gw_{1})) - \Re(\b_{V}(g^{*}w_{2}, g^{*}w_{2})) \\
                                           & \Leftrightarrow & \begin{cases} \Re(\b_{V}(w_{1}, w_{1})) - \Re(\b_{V}(gw_{1}, gw_{1})) & > 0 \\ \Re(\b_{V}(w_{2}, w_{2})) - \Re(\b_{V}(g^{*}w_{2}, g^{*}w_{2})) & < 0 \end{cases} \\
                                           & \Leftrightarrow & \begin{cases} \b_{V}(w, w) - \b_{V}(gw, gw) & > 0 \\ \b_{V}(w, w) - \b_{V}(g^{*}w, g^{*}w) & < 0 \end{cases} \qquad (\forall w \in \Re(V, c) \setminus \{0\}) \\
                                          & \Leftrightarrow & \F - g^{*}\F g > 0
\end{eqnarray*}  

\begin{exem}

We assume that $V = \mathbb{C}^{2}$ and that the signature of $\b_{V}$ is $(1, 1)$. The compact torus $\T$ is given by:
\begin{equation*}
\T = \left\{t = \begin{pmatrix} t_{1} & 0 \\ 0 & t_{2} \end{pmatrix}, t_{1}, t_{2} \in \U(1, \mathbb{C})\right\}.
\end{equation*}
and then, using the notations of Section \ref{SectionU(1,1)},
\begin{equation*}
\F - \bar{t}^{t}\F t < 0 \Leftrightarrow \Id_{1, 1} - \begin{pmatrix} \bar{t_{1}} & 0 \\ 0 & \bar{t_{2}} \end{pmatrix} \Id_{1, 1} \begin{pmatrix} t_{1} & 0 \\ 0 & t_{2} \end{pmatrix}  > 0 \Leftrightarrow  \begin{pmatrix} 1 - |t_{1}|^{2}  & 0 \\ 0 &  |t_{2}|^{2} -1  \end{pmatrix} > 0,
\end{equation*}
i.e. $t \in \T^{++}  \Leftrightarrow |t_{1}| < 1$ and $|t_{2}| > 1$.   

\end{exem}


\begin{thebibliography}{99}

\bibitem{HOW1}
Howe, Roger. Transcending classical invariant theory. {\it J. Amer. Math. Soc.}, 2(3):535-552, 1989.

\bibitem{HOW3}
Howe, Roger. The oscillator semigroup. In {\it The mathematical heritage of {H}ermann {W}eyl ({D}urham, {NC},
              1987)}, volume 48 of {\it Proc. Sympos. Pure Math.}, pages 61-132. Amer. Math. Soc., Providence, RI, 1988.

\bibitem{HOW4}
Howe, Roger. Quantum mechanics and partial differential equations. {\it J. Funct. Anal.}, 38(2):188-254, 1980.

\bibitem{HOW5}
Howe, Roger. Preliminaries I. {\it Unpublished}.

\bibitem{TOM1}
Aubert, Anne-Marie and Przebinda, Tomasz. A reverse engineering approach to the {W}eil representation. {\it Cent. Eur. J. Math.}, 12(10):1500-1585, 2014.

\bibitem{TOM2}
Przebinda, Tomasz. The character and the wave front set correspondence in the stable range. {\it J. Funct. Anal.}, 274(5):1284-1305, 2018.

\bibitem{TOM3}
Przebinda, Tomasz. A {C}auchy {H}arish-{C}handra integral, for a real reductive dual pair {\it Invent. Math.}, 141(2):299-363, 2000.

\bibitem{TOM8}
T. Przebinda and A.Pasquale and M. McKee. Weyl Calculus and dual pairs. In {\it arXiv:1405.2431v2}, pages 1-100. 2015.

\bibitem{TOM9}
Przebinda, Tomasz. The duality correspondence of infinitesimal characters. {\it Colloq. Math.}, 70(1):93-102, 1996.

\bibitem{ROS}
Rossmann, Wulf. Kirillov's character formula for reductive {L}ie groups. {\it Invent. Math.}, 48(3):207-220, 1978.

\bibitem{WEIL}
Weil, Andr\'e. Sur certains groupes d'op\'erateurs unitaires. {\it Acta Math.}, 111:143-211, 1964.

\bibitem{HAR1}
Harish-Chandra. Representations of semisimple {L}ie groups. {III}. {\it Trans. Amer. Math. Soc.}, 76:234-253, 1954.

\bibitem{HAR2}
Harish-Chandra. Discrete series for semisimple {L}ie groups. {II}. {E}xplicit determination of the characters. {\it Acta Math.}, 116:1-111, 1966.

\bibitem{ENR}
Enright, Thomas J. Analogues of {K}ostant's u-cohomology formulas for unitary highest weight modules. {\it J. Reine Angew. Math.}, 392:27-36, 1988.

\bibitem{WAL}
Wallach, Nolan R. {\it Real reductive groups. {I}}. volume 132 of {\it Pure and Applied Mathematics.} Academic Press, Inc., Boston, MA, 1988.

\bibitem{WAL2}
Wallach, Nolan R. {\it Harmonic analysis on homogeneous spaces.} Marcel Dekker, Inc., New York, 1973. Pure and Applied Mathematics, No. 19.

\bibitem{VER}
Kashiwara, M. and Vergne, M. On the {S}egal-{S}hale-{W}eil representations and harmonic polynomials. {\it Invent. Math.}, 44(1):1-47, 1978.

\bibitem{PAUL}
Paul, Annegret. Howe correspondence for real unitary groups. {\it J. Funct. Anal.}, 159(2):384-431, 1998.

\bibitem{KUD}
Kudla, Stephen S. On the local theta-correspondence. {\it Invent. Math.}, 83(2):229-255, 1986.

\bibitem{LI}
Li, Jian-Shu. Singular unitary representations of classical groups. {\it Invent. Math.}, 97(2):237-255, 1989.

\bibitem{HIRAI}
Hirai, Takeshi. The {P}lancherel formula for {${\rm SU}(p,\,q)$}. {\it J. Math. Soc. Japan}, 22:134-179, 1970.

\bibitem{KNA}
Knapp, Anthony W. {\it Lie groups beyond an introduction,} volume 140 of {\it Progress in Mathematics.} Birkh\"auser Boston, Inc., Boston, MA, second edition, 2002.

\bibitem{KIR}
Kirillov, A. A. Characters of unitary representations of {L}ie groups. {R}eduction theorems. {\it unkcional. Anal. i Prilo\v zen.}, 3(1):36-47, 1969.

\bibitem{THO}
Thomas, Teruji. The character of the {W}eil representation. {\it . Lond. Math. Soc. (2)}, 77(1):221-239, 2008.

\bibitem{VER1}
Berline, Nicole and Getzler, Ezra and Vergne, Mich\`ele. {\it Heat kernels and {D}irac operators.} Grundlehren Text Editions. Springer-Verlag, Berlin, 2004. Corrected reprint of the 1992 original.

\bibitem{HEC}
Hecht, Henryk. The characters of Harish-Chandra representations. ProQuest LLC, Ann Arbor, MI, 1976. Thesis (Ph.D.)-Columbia University.

\end{thebibliography}
\end{document}